\def\doi#1{   {\href{http://dx.doi.org/#1}
		{{\mdseries\ttfamily DOI}}}}
\newcommand{\R}{\mathbb{R}}
\newcommand{\N}{\mathbb{N}}
\newcommand{\half}{\frac{1}{2}}
\newcommand{\beeq}{\begin{equation}}\newcommand{\eneq}{\end{equation}}
\newcommand{\supp}{\mathrm{supp} \ }
\def \endprf{\hfill  {\vrule height6pt width6pt depth0pt}\medskip}
\def\<{\langle}             \def\>{\rangle}
\def\({\left(}                 \def\){\right)}
\newtheorem{theorem}{Theorem}[section]
\newtheorem{lemma}[theorem]{Lemma}
\newtheorem{proposition}[theorem]{Proposition}
\newtheorem{definition}[theorem]{Definition}
\newtheorem{remark}[theorem]{Remark}
\newcommand{\ep}{\varepsilon}
\title[Existence of global solutions to NLWs on exterior domains]
{Criteria of the existence of global solutions to semilinear wave equations with first-order derivatives on exterior domains
}
\author{Kerun Shao}
\address{School of Mathematical Sciences\\ Zhejiang University\\Hangzhou 310058\\ P. R. China}\email{shaokr@163.com}
\keywords{Glassey conjecture, Semilinear wave equation, Blow-up, Lifespan, Exterior domain}
\subjclass{35L05, 35L71, 35B30, 35B33, 35B44}
\date{\today}
\begin{document}
\begin{abstract}
	We study the existence of global solutions to semilinear wave equations on exterior domains $\R^n\setminus\mathcal{K}$, $n\geq2$, with small initial data and nonlinear terms $F(\partial u)$ where $F\in C^\kappa$ and $\partial^{\leq\kappa}F(0)=0$. If $n\geq2$ and $\kappa>n/2$, criteria of the existence of a global solution for general initial data  are provided, except for non-empty obstacles $\mathcal{K}$ when $n=2$. For $n\geq3$ and $1\leq\kappa\leq n/2$, we verify the criteria for radial solutions provided obstacles $\mathcal{K}$ are closed balls centered at origin. These criteria are established by local energy estimates and the weighted Sobolev embedding including trace estimates. Meanwhile, for the sample choice of the nonlinear term and initial data, sharp estimates of lifespan are obtained. 
\end{abstract}
\maketitle
\section{Introduction}
Let $n\geq2$ and $\mathcal{K}\subset\R^n$ be a smooth, compact and nontrapping obstacle. We denote $M=\R^n\setminus\mathcal{K}$, hence $M=\R^n$ when $\mathcal{K}=\emptyset$, and consider the following semilinear wave equation
\begin{equation}\label{eq-critnonlin-M}
	\left\{\begin{aligned}
		&\Box u(t,x)\coloneq\partial_{t}^2u-\Delta u=F(\partial u)&&, \ (t,x)\in (0,T)\times M, \ \\
		&u(t,x)=0&&, \ x\in \partial M,  \ t>0 , \\
		&u(0,x)=f(x), \ u_t(0,x)=g(x) &&, \ x\in M. 
	\end{aligned}
	\right.
\end{equation}
Here, $\partial u=(\partial_t u, \partial_1 u, \cdots, \partial_n u)$, the real-valued function 
\begin{equation*}
	F\in C^\kappa(\{\mathbf{q}\in\R^{n+1}:|\mathbf{q}|\leq1\}),\  \kappa\geq1,  \ \partial_\mathbf{q}^{\leq\kappa} F(0)=0,
\end{equation*}
and real-valued initial data $(f,g)$ satisfy some compatibility conditions when $\mathcal{K}$ is non-empty. By translation and scaling, when $\mathcal{K}$ is non-empty, we can assume that the origin is in the interior of $\mathcal{K}$ and $\mathcal{K}\subset \overline{B_1}$. Here, $B_1\subset\R^n$ is the unit open ball center at the origin. Denote $\rho(\tau)=\sup_{|\mathbf{q}|\leq\tau}|\partial_\mathbf{q}^{\kappa}F(\mathbf{q})|$, $0\leq\tau\leq1$. When there is no danger of misunderstanding, we will omit $\mathbf{q}$ in $\partial_\mathbf{q}$ hereby.

The purpose of this paper is to show how the combination of local energy estimates and the control of the $\|\partial u\|_\infty$ leads to the criterion of global existence theorems for small amplitude initial data and nonlinear terms only involving first-order derivatives. The problem \eqref{eq-critnonlin-M} is a generalized version of Glassey conjecture in the exterior domain; see Hidano-Wang-Yokoyama \cite{MR2980460} and the references therein. 
For spatial dimension $n\geq3$, we prove the global existence of any small amplitude solutions to \eqref{eq-critnonlin-M}, if $\int_{0}^{1}\rho(\tau)\tau^{\kappa-3}<\infty$ for $n=3$ and $\kappa>\frac{n}{2}$ for $n\geq4$. Also, for sample choices of the initial data and nonlinear terms, the solutions will blow up at finite time, when $n=3$, $\kappa=2$, and $\int_{0}^{1}\rho(\tau)\tau^{-1} d\tau=\infty$. 
When $1\leq\kappa\leq \frac{n}{2}$, for spatial dimension $n\geq3$, the current technology can only be applied for the radial cases and non-empty obstacles $\mathcal{K}$, that is, as long as $\mathcal{K}$ is a closed ball centered at origin, we can obtain the global existence of any small amplitude radial solution for any nonlinear term $F(\partial_tu,\partial_r u)$ satisfying $\kappa=1$ and $\int_{0}^{1}\rho(\tau)\tau^{-\frac{2}{n-1}-1}d\tau<\infty$, or $\kappa\geq2$. Again, for sample choices of initial data and nonlinear terms, the solutions cannot extend to infinity, when $\kappa=1$ and $\int_{0}^{1}\rho(\tau)\tau^{-\frac{2}{n-1}-1}d\tau=\infty$.
For spatial dimension $2$, due to the lack of local energy estimates, similar results are only established for $M=\R^2$ and $\kappa\geq2$. Given any small amplitude initial data, global solutions exist when $\int_{0}^{1}\rho(\tau)\tau^{\kappa-4}d\tau<\infty$. For sample choices of initial data and nonlinear terms, the solutions will blow up in finite time, when $\int_{0}^{1}\rho(\tau)\tau^{\kappa-4}d\tau=\infty$. On the whole, the existence of a global solution to \eqref{eq-critnonlin-M} with small initial data basically depends on whether the integral $\int_{0}^{1}\rho(\tau)\tau^{\kappa-p_c(n)-1}d\tau$, $p_c(n)=1+2/(n-1)$, converges or not. Meanwhile, the sharp estimates of the lifespan, the maximal existence time of the solution to \eqref{eq-critnonlin-M}, are also provided for the blow-up solutions for sample choices of the initial data and nonlinear terms.

Let us recall the history of the Glassey conjecture. For Cauchy problem
\begin{equation}\label{eq-intro-classical-Glassey}
	\left\{\begin{aligned}
		&\Box u(t,x)=a|\partial_t u|^p+b|\nabla u|^p&&, \ (t,x)\in (0,T)\times \R^n, \ \\
		&u(0,x)=\ep \phi(x), \ u_t(0,x)=\ep \psi(x) &&, \ x\in \R^n, 
	\end{aligned}
	\right.
\end{equation}
with $p>1$ and $a,b\in\R$, it is conjectured that the critical power for the global existence v.s. blow-up is $p_c(n)$ in Glassey's mathematical review \cite{MR0711440Review}; see also Schaeffer \cite{MR0829595} and Rammaha \cite{MR0879355}. For $a\neq0$, $b=0$, the global existence part of the conjecture is verified for general initial data in dimension $2,3$ by Hidano-Tsutaya \cite{MR1386769} and Tzvetkov \cite{MR1637692}, independently. For $n\geq4$, the conjecture is only demonstrated for radial initial data and $p\in(p_c(n),1+2/(n-2))$; see Hidano-Wang-Yokoyama \cite{MR2980460}. For $a>0$ and sample choices of the initial data, the blow-up result is verified by Rammaha \cite{MR0879355} for all spatial dimension $n\geq2$ expect for critical cases, $p=p_c(n)$, in the even dimension. Later, Zhou \cite{MR1845748} shows the blow-up for all dimension $n\geq1$ with $p\in(1,p_c(n)]$ where $p_c(1)=\infty$, together with upper bound estimate on the lifespan for $p\in(1,p_c(n)]$, which is 
\begin{equation}\label{eq-intro-power-upperbound}
	\varlimsup_{\ep\rightarrow 0^+}	\int_0^T [(1+t)^{-(n-1)/2}\ep]^{p-1} dt<\infty, \ \forall p\in (1, p_c(n)].
\end{equation}
On the other hand, deduced from the well-posed theory for $p\in(1,p_c(n)]$, the lower bound on the lifespan is also obtained for all spatial dimensions, that is,
\begin{equation}\label{eq-intro-power-lowerbound}
	\varliminf_{\ep\rightarrow 0^+}	\int_0^T [(1+t)^{-(n-1)/2}\ep]^{p-1} dt>0, \forall p\in (1, p_c(n)];
\end{equation}
see Hidano-Wang-Yokoyama \cite{MR2980460} for $n\geq2$ (see also Fang-Wang \cite{MR3169752} for the critical case in dimension two) and Kitamura-Morisawa-Takamura \cite{MR4558951} for dimension one. Hence, for nonlinear terms $F(\partial u)=|\partial_t u|^p$, the estimates of the lifespan \eqref{eq-intro-power-upperbound} and \eqref{eq-intro-power-lowerbound} are sharp.

As for $a=0,b\neq0$, the results for the global existence are the same as above, as well as the lower bound estimates for $p\in (1,p_c(n)]$. Hence, when $b>0$, one may expect the powers for blow-up results are also $(1,p_c(n)]$ and the lifespan estimates \eqref{eq-intro-power-upperbound} and \eqref{eq-intro-power-lowerbound} are sharp. For dimension one, the sharp lifespan estimates are verified in Sasaki-Takamatsu-Takamura \cite{MR4643157}. For dimension $n=3$, the blow-up result is first established in Sideris \cite{MR0711440} for $p=p_c(3)=2$, as well as the upper bound estimate $\varlimsup_{\ep\rightarrow 0^+}\ln(T(\ep))\ep^2<\infty$, which does not match the well-known lower bound verified in John-Klainerman \cite{MR745325}, $\varliminf_{\ep\rightarrow 0^+}\ln(T(\ep))\ep>0$. For $n=2$, Schaeffer \cite{MR0829595} obtains the blow-up result for $p=p_c(2)=3$. For $n\geq4$, Rammaha shows the blow-up result in \cite{MR0879355} for all of the critical and subcritical powers excluding the critical cases for even dimensions and studies the upper bound for $n=2,3$ and $p=2$ in \cite{MR1338808}. In Takamura, Wang and the author's recent work \cite{MR4819613}, we provide a uniform way to deduce the sharp upper bound estimates of the lifespan for all dimension $n\geq2$ and $p\in(1,p_c(n)]$, that is,
\begin{equation*}
	\left\{
	\begin{aligned}
		&\varlimsup_{\ep\rightarrow 0^+}\ln(T(\ep))\ep^{p-1}<\infty ,\ p=p_c(n);\\
		&\varlimsup_{\ep\rightarrow 0^+}T(\ep)\ep^{\frac{2(p-1)}{2-(n-1)(p-1)}}<\infty , \ p\in(1,p_c(n)).
	\end{aligned}
	\right.
\end{equation*} 

There are also similar results when the problem \eqref{eq-intro-classical-Glassey} is considered on the asymptotically flat Lorentzian manifolds or the exterior domain; see Wang \cite{MR3378835,MR3338309}.

Since the critical power $p_c(n)$ is the borderline of the existence of the global solution with power-type nonlinearities. To determine the threshold nature of the nonlinear term, one may study the problem
\begin{equation}\label{eq-intro-threshold-powertype}
	\left\{\begin{aligned}
		&\Box u(t,x)=|\partial_t u|^{p_c(n)}\mu(|\partial_t u|)&&, \ (t,x)\in (0,T)\times \R^n, \ \\
		&u(0,x)=\ep \phi(x), \ u_t(0,x)=\ep \psi(x) &&, \ x\in \R^n, 
	\end{aligned}
	\right.
\end{equation}
where the function $\mu:[0,\infty)\rightarrow[0,\infty)$ is a non-decreasing sufficiently smooth function with $\mu(0)=0$. The blow-up is established by Chen-Palmieri \cite[Theorem 2.1]{2306.11478} for all classical solutions to \eqref{eq-intro-threshold-powertype} under the assumptions that the function $\tilde{F}:s\in\R\rightarrow|s|^{p_c(n)}\mu(|s|)$ is convex, that $\int_{0}^{1}\mu(s)s^{-1}ds=\infty$, and that initial data $\phi,\psi\in C_c^\infty$ with $\int_{\R^n}\psi(x)>0$. Also, they provide the upper bound estimates for the lifespan, that is, for all sufficiently small $\ep$, there exist constants $K_1$, $K_2$, and $K_3$ such that the lifespan $T(\ep)$ of the solution to \eqref{eq-intro-threshold-powertype} satisfies
\begin{equation*}
	T(\ep)\leq K_3\ep^{\frac{2}{n-1}}\left[\mathcal{H}^{inv}(K_1\ep^{-\frac{2}{n-1}}+\mathcal{H}(K_2\ep))\right]^{-\frac{2}{n-1}},
\end{equation*}
where the function $\mathcal{H}:s\in(0,1]\in\rightarrow \int_{s}^{1}\mu(\tau)\tau^{-1}d\tau$ and $\mathcal{H}^{inv}$ is the inverse function of $\mathcal{H}$. They also obtain results for the existence part when $n=3$ and initial data $\phi,\psi\in C_c^\infty$ are radial. If $\int_{0}^{1}\mu(s)s^{-1}ds<\infty$, there exists a unique radial global classical solution $u$ to \eqref{eq-intro-threshold-powertype} for all sufficiently small $\ep$, provided that $\mu\in C^2((0,\infty))$ satisfying $|\mu^{(i)}(\tau)|\lesssim\tau^{-i}\mu(\tau)$, $i=1,2$. When $\int_{0}^{1}\mu(s)s^{-1}ds=\infty$, the lifespan of the radial classical solution to \eqref{eq-intro-threshold-powertype} has the lower bound
\begin{equation*}
	T(\ep)\geq K_6\ep\left[\mathcal{H}^{inv}(K_4\ep^{-1}+\mathcal{H}(K_5\ep))\right]^{-1},
\end{equation*}
where $K_4$, $K_5$, and $K_6$ are some positive constants; see \cite[Theorem 2.2 and Propsition 2.2]{2306.11478}.

With these results in hand, it is natural to ask what is the criterion for general non-linear terms $F(\partial u)$. Since $\partial^{\leq\kappa} F(0)=0$, it follows that in some neighborhood of the origin $|F(\mathbf{q})|\lesssim|\mathbf{q}|^\kappa\rho(|\mathbf{q}|)$. Heuristically, if the obstacle $K$ is empty, the solutions to \eqref{eq-critnonlin-M} could behave like free waves with energy of size $\ep$ and decay rate $(n-1)/2$, for the time interval $[0,T)$, when
\begin{equation}\label{eq-intro-apriori-calculate}
	\int_{0}^{T}[\<t\>^{-\frac{n-1}{2}}\ep]^{\kappa-1}\rho\left(K_7\<t\>^{-\frac{n-1}{2}}\ep\right)dt\ll 1.
\end{equation}
Here, $K_7$ is some positive constant from the weighted Sobolev embedding and $\<t\>=\sqrt{1+t^2}$.
With simple calculation, the main term of the left hand side of \eqref{eq-intro-apriori-calculate} is
\begin{equation*}
	\ep^{\kappa-1}\int_{\<T\>^{-\frac{n-1}{2}}}^{1}\frac{\rho(K_7\ep\tau)}{\tau}\tau^{\kappa-p_c(n)}d\tau=K_7^{p_c(n)-\kappa}\ep^{\frac{2}{n-1}}\int_{K_7\ep\<T\>^{-\frac{n-1}{2}}}^{K_7\ep}\frac{\rho(\tau)}{\tau}\tau^{\kappa-p_c(n)}d\tau,
\end{equation*}
which gives rise to the following expected sharp estimate for the lifespan, provided $\int_{0}^{1}\rho(\tau)\tau^{\kappa-p_c(n)-1}d\tau=\infty$,
\begin{equation*}
	T\ep^{-\frac{2}{n-1}}\left[\mathcal{H}_{\kappa,n}^{inv}(K_8\ep^{-\frac{2}{n-1}}+\mathcal{H}_{\kappa,n}(K_7\ep))\right]^{\frac{2}{n-1}}\sim1,
\end{equation*}
where $K_8$ is some positive constant and $\mathcal{H}_{\kappa,n}$ is
\begin{equation*}
	\mathcal{H}_{\kappa,n}:s\in(0,1]\rightarrow\int_{s}^{1}\frac{\rho(\tau)}{\tau}\tau^{\kappa-p_c(n)}d\tau
\end{equation*}
and $\mathcal{H}_{\kappa,n}^{inv}$ is the inverse function of $\mathcal{H}_{\kappa,n}$.

Now, we can state our results.

\begin{theorem}\label{thm-3dim}
	Let $n=3$, $\kappa= 2$, and $\mathcal{K}$ be smooth, compact and nontrapping obstacles. Provided $\int_{0}^{1}\rho(\tau)\tau^{-1} d\tau<\infty$, there exists a small positive constant $\ep_1$, such that, the problem \eqref{eq-critnonlin-M} has a unique global solution satisfying
	\begin{equation*}
		\partial_t^{i} u\in C([0,\infty);H_D^{3-i}(M)) , \ 0\leq i\leq2,
	\end{equation*}
	whenever the initial data satisfying the compatibility of order $3$, and
	\begin{equation*}
		\sum_{|\alpha|\leq2}\|(\nabla,\Omega)^\alpha(\nabla f,g)\|_{L^2(M)}=\ep\leq\ep_1, \ \|f\|_{L^2(M)}<\infty.
	\end{equation*}
	Otherwise, if $\int_{0}^{1}\rho(\tau)\tau^{-1} d\tau=\infty$, when $\ep$ is small enough, there exist some positive constants $c_1$, $c_2$, and $c_3$ such that we have a unique solution satisfying
	\begin{equation*}
		\partial_t^{i} u\in C([0,T_\ep];H_D^{3-i}(M)) , \ 0\leq i\leq2,
	\end{equation*}
	with
	\begin{equation*}
		T_\ep=c_3\ep\left[\mathcal{H}_{2,3}^{inv}(c_1\ep^{-1}+\mathcal{H}_{2,3}(c_2\ep))\right]^{-1}.
	\end{equation*}
\end{theorem}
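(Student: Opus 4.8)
The plan is to establish an a priori weighted energy estimate and then run a continuity (bootstrap) argument, resting on three ingredients: local energy (Keel--Smith--Sogge type) estimates for the Dirichlet-wave equation on the nontrapping exterior domain $M$, a weighted Sobolev embedding with trace that converts these into pointwise decay of $\partial u$, and the pointwise nonlinear bound $|\partial^j F(\partial u)|\lesssim|\partial u|^{\kappa-j}\rho(|\partial u|)$, $j\leq\kappa$, furnished by $\partial^{\leq\kappa}F(0)=0$. First I would fix the norm adapted to the data in the statement, using the vector fields $\Gamma=(\nabla,\Omega)$---only spatial translations and rotations, since scaling and Lorentz boosts are unavailable on an exterior domain---and seek a solution with $\Gamma^{\leq2}\partial u$ controlled in the corresponding space-time local-energy norm. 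Since rotations commute with $\Box$ but need not preserve the Dirichlet condition at a general $\partial M$, I would apply $\Omega$ freely in the far region where the flat structure holds and control derivatives near $\mathcal{K}$ by elliptic regularity, as in the Keel--Smith--Sogge and Metcalfe--Sogge framework.

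The linear theory reduces everything to the source term. The nontrapping hypothesis yields the local energy estimate bounding the weighted space-time norm of $\Gamma^{\leq2}\partial u$ by the data norm $\ep$ plus a dual-norm of $\Gamma^{\leq2}F(\partial u)$; the weighted Sobolev/trace embedding then gives, in $n=3$, the decay $\<t\>\,\|\partial u(t)\|_{L^\infty}\lesssim$ (weighted energy with rotations). Under the bootstrap hypothesis $\|\partial u(t)\|_{L^\infty}\leq C\ep\<t\>^{-1}$, the source contributes precisely the integral in \eqref{eq-intro-apriori-calculate},
\begin{equation*}
	\int_0^T \ep\<t\>^{-1}\rho\!\left(K_7\ep\<t\>^{-1}\right)dt,
\end{equation*}
because $\kappa=2$. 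The substitution $\tau=K_7\ep\<t\>^{-1}$ turns this into $\ep\int_{K_7\ep\<T\>^{-1}}^{K_7\ep}\rho(\tau)\tau^{-1}d\tau$, i.e.\ a constant multiple of $\ep\big[\mathcal{H}_{2,3}(K_7\ep\<T\>^{-1})-\mathcal{H}_{2,3}(K_7\ep)\big]$.

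For the first assertion, convergence of $\int_0^1\rho(\tau)\tau^{-1}d\tau$ keeps this quantity small uniformly in $T$ once $\ep\leq\ep_1$, so the bootstrap constant strictly improves and the solution extends to $[0,\infty)$; local well-posedness together with the order-$3$ compatibility conditions supplies the starting solution and the stated regularity $\partial_t^iu\in C([0,\infty);H_D^{3-i}(M))$, $0\leq i\leq2$. For the second assertion, divergence of the integral means the accumulated quantity reaches the bootstrap threshold at a finite time; equating it to an absolute $O(1)$ constant and inverting via $\mathcal{H}_{2,3}^{inv}$ gives $\<T\>=K_7\ep[\mathcal{H}_{2,3}^{inv}(c_1\ep^{-1}+\mathcal{H}_{2,3}(c_2\ep))]^{-1}$, whence the lower bound on the lifespan $T_\ep=c_3\ep[\mathcal{H}_{2,3}^{inv}(c_1\ep^{-1}+\mathcal{H}_{2,3}(c_2\ep))]^{-1}$.

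I expect the main obstacle to be the local energy estimate itself: one must match the global weighted (far-field) bounds, where the $\<t\>^{-1}$ decay is generated along the light cone $r\approx t$, with the near-obstacle local energy decay guaranteed by the nontrapping hypothesis. This is carried out by a cutoff splitting $M$ into a neighborhood of $\mathcal{K}$ and the exterior region, transporting the far-field bound by finite speed of propagation and absorbing the elliptic and boundary commutator terms near $\mathcal{K}$. Keeping the weights and the count of vector fields consistent across this splitting---at the exact critical balance $\kappa=p_c(3)=2$, where the time integral converges or diverges only marginally---is the delicate point on which the sharpness of $T_\ep$ ultimately rests.
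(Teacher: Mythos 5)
Your overall architecture (local energy estimates for the exterior Dirichlet-wave equation, weighted Sobolev embedding, the pointwise bound on $F$ from $\partial^{\leq\kappa}F(0)=0$, an iteration/bootstrap, and inversion of $\mathcal{H}_{2,3}$ to read off $T_\ep$) matches the paper, but there is a genuine gap at the central analytic step. You claim that the weighted Sobolev/trace embedding yields the uniform \emph{time} decay $\langle t\rangle\,\|\partial u(t)\|_{L^\infty}\lesssim$ (weighted energy with rotations), and you run the bootstrap on $\|\partial u(t)\|_{L^\infty}\leq C\ep\langle t\rangle^{-1}$. With only the admissible vector fields $Y=(\nabla,\Omega)$ --- and you correctly note that boosts and scaling are unavailable on $M$ --- the embedding (Lemma \ref{lem-Sobolev-rotaition}) gives \emph{spatial} decay only: $\langle r\rangle\,|\partial u(t,x)|\lesssim\|Y^{\leq 2}\partial u(t)\|_{L^2}$. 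Near the obstacle, where $r\sim 1$, this is no decay at all, for any $t$; and none of your stated ingredients converts $\langle r\rangle^{-1}$ into $\langle t\rangle^{-1}$. That conversion would require the solution to concentrate near the light cone (i.e.\ Klainerman--Sobolev with boosts/scaling, exactly what is forbidden here) or hard pointwise decay theorems for exterior domains, which are not part of your linear toolkit and which your Duhamel/bootstrap step could not propagate anyway, since the local energy estimate outputs space-time $L^2$ bounds, not pointwise time decay. Consequently the source bound $\int_0^T\ep\langle t\rangle^{-1}\rho(K_7\ep\langle t\rangle^{-1})\,dt$ and the closing of your bootstrap are unjustified; the fact that your final formula for $T_\ep$ is correct reflects only that the heuristic \eqref{eq-intro-apriori-calculate} and the rigorous argument happen to produce the same integral.

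The paper closes the argument with no time decay whatsoever. The nonlinearity is split at the \emph{spatial} scale $|x|\sim\langle T\rangle$ (the cutoffs $\chi_T,\bar\chi_T$ are in $x$, not $t$): on $\{|x|\lesssim\langle T\rangle\}$ it is estimated in the dual local energy space $LE^*=l_1^{1/2}L_t^2L_x^2$, where a dyadic decomposition over annuli $|x|\sim 2^j$ produces one factor $\rho(C_S\ep\,2^{-j})$ per annulus from the spatial decay of $\partial\tilde u$, and Lemma \ref{lem-seriessum} converts the sum over $1\leq 2^j\lesssim\langle T\rangle$ into precisely $\int_{\langle T\rangle^{-1}}^1\rho(C_S\ep\tau)\tau^{-1}d\tau$; on $\{|x|\gtrsim\langle T\rangle\}$ it is placed in $L_t^1L_x^2$, where $\langle r\rangle^{-1}\leq\langle T\rangle^{-1}$ and $T\langle T\rangle^{-1}\leq 1$ give the harmless term $\rho(C_S\ep)\ep^2$. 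Thus the variable entering $\mathcal{H}_{2,3}$ is the spatial truncation radius $\langle T\rangle$, not a decay rate in $t$. To repair your proposal you would need to redo the nonlinear estimates in this $LE^*+L_t^1L_x^2$ framework (including the top-order quadratic term $|Z\partial\tilde u|^2$, handled via weighted $L^4$ bounds on annuli, and the compatibility-respecting first iterate $u_0$ used to start the contraction), at which point you would essentially be reproducing the paper's proof of Proposition \ref{prop-proof-3dim-iteration}.
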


Theorem \ref{thm-3dim} is mainly established by local energy estimates and Sobolev embedding. Here, we give the definition of the compatibility of order $\kappa+1$.

\begin{definition}[The compatibility of order $\kappa+1$]
	For equation \eqref{eq-critnonlin-M}, note that there exist, formally, functions $\{\Psi_j\}_{0\leq j\leq \kappa+1}$ such that 
	\begin{equation*}
		\partial_t^ju(0,x)=\Psi_j(\nabla^{\leq j} f,\nabla^{\leq j-1}g)(x), \ x\in M , \  0\leq j\leq\kappa+1.
	\end{equation*}
	A priori, the initial data $(f,g)$ should fulfill
	\begin{equation*}
		\Psi_j(\nabla^{\leq j} f,\nabla^{\leq j-1}g)|_{\partial M}=0, \  0\leq j\leq \kappa.
	\end{equation*}
	Since the estimate of $\kappa$th-order energy needs $\|\partial_t^j u(0,\cdot)\|_{\dot{H}^1}<\infty$, $0\leq j \leq \kappa$ and $\partial_t^ju(0,x)\in L^2$, $1\leq j\leq\kappa+1$, by the compatibility of order $\kappa+1$, we mean that 
	\begin{gather*}
		f\in \dot{H}_D^1(M), \ \Psi_{j}(\nabla^{\leq j} f,\nabla^{\leq j-1}g)(x)\in H_D^1(M), \ 1\leq j\leq\kappa,\\
		\Psi_{\kappa+1}(\nabla^{\leq \kappa+1} f,\nabla^{\leq \kappa}g)(x)\in L^2(M).
	\end{gather*}
	
	Similarly, for Dirichlet-wave equations
	\begin{equation}\label{eq-Diriwave-M}
		\left\{\begin{aligned}
			&\Box u(t,x)=G(t,x)&&, \ (t,x)\in (0,T)\times M, \ \\
			&u(t,x)=0&&, \ x\in \partial M,  \ t>0 , \\
			&u(0,x)=f(x), \ u_t(0,x)=g(x) &&, \ x\in M, 
		\end{aligned}
		\right.
	\end{equation}
	there exist, formally, functions $\tilde{\Psi}_j$ such that
	\begin{equation*}
		\partial_t^ju(0,x)=\tilde{\Psi}_j(\nabla^{\leq j} f,\nabla^{\leq j-1}g, \partial^{\leq j-2}G), \ x\in M.
	\end{equation*}
	Then, the compatibility condition of order $k+1$ for the equation \eqref{eq-Diriwave-M} is
	\begin{equation*}
		\begin{gathered}
			f\in \dot{H}_D^1(M), \ \tilde{\Psi}_j(\nabla^{\leq j} f,\nabla^{\leq j-1}g, \partial^{\leq j-2}G) \in H_D^1(M), \  1\leq j \leq k,\\
			\tilde{\Psi}_{k+1}(\nabla^{\leq k+1} f,\nabla^{\leq k}g, \partial^{\leq k-1}G)(x)\in L^2(M).
		\end{gathered}
	\end{equation*}
\end{definition}

Using the argument in the proof of Theorem \ref{thm-3dim}, we can obtain the following result for $n\geq3$ and $\kappa>\max\{n/2,2\}$.

\begin{remark}\label{coro-Euclidgeq3}
	Let $n\geq3$, $\kappa > \max\{n/2,2\}$, and $\mathcal{K}$ be smooth, compact and nontrapping obstacles. There is a unique global solution to \eqref{eq-critnonlin-M} satisfying
	\begin{equation*}
		\partial_t^{i} u\in C([0,\infty);H_D^{\kappa+1-i}(M)) , \ 0\leq i\leq\kappa,
	\end{equation*}
	whenever the initial data satisfying the compatibility of order $\kappa+1$, and
	\begin{equation*}
		\sum_{|\alpha|\leq\kappa}\|(\nabla,\Omega)^\alpha(\nabla f,g)\|_{L^2(M)}=\ep\ll 1, \ \|f\|_{L^2(M)}<\infty.
	\end{equation*}
\end{remark}

Due to the lack of local energy estimates in dimension 2, the problem \eqref{eq-critnonlin-M} for non-empty obstacles is still unsettled. However, for $\mathcal{K}=\emptyset$, we will have the following theorem by standard energy estimates, Klainerman-Sobolev inequalities, and trace estimates.

\begin{theorem}\label{coro-Euclid2}
	Let $n=2$, $\mathcal{K}=\emptyset$, and $\kappa\geq 2$. If $\int_{0}^{1}\rho(\tau)\tau^{\kappa-4} d\tau<\infty$, there exists a small positive constant $\ep_2$, such that, the problem has a unique global solution to \eqref{eq-critnonlin-M} satisfying
\begin{equation*}
	\partial_t^{i} u\in C([0,\infty);H^{\kappa+1-i}(\R^2)) , \ 0\leq i\leq\kappa,
\end{equation*}
whenever the initial data satisfying
\begin{equation*}
	\sum_{|\alpha|\leq|\beta|\leq\kappa}\|x^\alpha\nabla^\beta f \|_{\dot{H}^1(\R^2)}+\|x^\alpha\nabla^\beta g \|_{L^2(\R^2)}=\ep\leq\ep_2, \ \|f\|_{L^2(\R^2)}<\infty.
\end{equation*}
Otherwise, if $\int_{0}^{1}\rho(\tau)\tau^{\kappa-4} d\tau=\infty$, there exist some positive $c_4$, $c_5$, and $c_6$ such that we have a unique solution satisfying
\begin{equation*}
	\partial^{i} u\in C([0,T_\ep];H^{\kappa+1-i}(\R^2)) , \ 0\leq i\leq\kappa,
\end{equation*}
with
\begin{equation}\label{eq-coro-Euclid2-lowerboundestimate}
	T_\ep=c_6\ep^{2}\left[\mathcal{H}_{\kappa,2}^{inv}(c_4\ep^{-2}+\mathcal{H}_{\kappa,2}(c_5\ep))\right]^{-2}.
\end{equation}
\end{theorem}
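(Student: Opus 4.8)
The plan is to run a continuity argument on a weighted high-order energy built from the full set of vector fields adapted to $\Box$, which is available precisely because $\mathcal{K}=\emptyset$ makes $\R^2$ invariant under Lorentz boosts and scaling. Let $Z$ denote any of $\partial_t,\partial_1,\partial_2$, the rotation $\Omega=x_1\partial_2-x_2\partial_1$, the boosts $L_i=t\partial_i+x_i\partial_t$, and the scaling $S=t\partial_t+x\cdot\nabla$. Since $[\Box,\partial]=[\Box,\Omega]=[\Box,L_i]=0$ and $[\Box,S]=2\Box$, commuting \eqref{eq-critnonlin-M} with $Z^\alpha$, $|\alpha|\le\kappa$, gives $\Box(Z^\alpha u)=\sum_{|\beta|\le|\alpha|}c_\beta\,Z^\beta F(\partial u)$ for constants $c_\beta$. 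The first point to record is that the data norm of the theorem controls these quantities at $t=0$: restricting a vector field to $t=0$ turns $L_i$ into $x_i\partial_t$ and $S$ into $x\cdot\nabla$, so every application contributes at most one spatial weight together with one spatial derivative. Hence, after using the equation to convert the time derivatives inside $\partial Z^\alpha u(0,\cdot)$ into spatial derivatives of $(f,g)$ (the nonlinear corrections being of higher order in $\ep$), one gets $\sum_{|\alpha|\le\kappa}\|\partial Z^\alpha u(0,\cdot)\|_{L^2}\lesssim\sum_{|\alpha|\le|\beta|\le\kappa}\big(\|x^\alpha\nabla^\beta\nabla f\|_{L^2}+\|x^\alpha\nabla^\beta g\|_{L^2}\big)=\ep$.

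Setting $E_\kappa(t)=\sum_{|\alpha|\le\kappa}\|\partial Z^\alpha u(t,\cdot)\|_{L^2(\R^2)}$, the standard energy identity for $\Box$ gives $E_\kappa(t)\lesssim E_\kappa(0)+\int_0^t\sum_{|\alpha|\le\kappa}\|Z^\alpha F(\partial u)(s,\cdot)\|_{L^2}\,ds$. The decay input is the pointwise bound $\|\partial Z^\beta u(t,\cdot)\|_{L^\infty}\lesssim\<t\>^{-1/2}E_\kappa(t)$ for $|\beta|\le\kappa-1$. I would obtain this by combining the Klainerman--Sobolev inequality, which converts the boost and scaling content of the $Z^\alpha$ into the spatial weight $\<t+|x|\>^{-1/2}\le\<t\>^{-1/2}$, with the trace estimate on circles, whose role is to economize one derivative so that the borderline count $|\beta|\le\kappa-1$ suffices in place of the $|\beta|\le\kappa-2$ that the plain Sobolev embedding would demand. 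It is exactly this economy that forces the restriction $\kappa\ge2$ and, because boosts and scaling do not preserve a Dirichlet obstacle, the restriction $\mathcal{K}=\emptyset$.

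The nonlinear estimate is then bookkeeping founded on $|\partial_{\mathbf{q}}^j F(\mathbf{q})|\lesssim|\mathbf{q}|^{\kappa-j}\rho(|\mathbf{q}|)$, $0\le j\le\kappa$, which follows from $\partial^{\le\kappa}F(0)=0$. Expanding $Z^\alpha F(\partial u)$ by the chain and Leibniz rules yields terms $\partial_{\mathbf{q}}^j F(\partial u)\prod_{i=1}^j Z^{\gamma_i}\partial u$ with $\sum|\gamma_i|=|\alpha|\le\kappa$; placing the single highest-order factor in $L^2$ (controlled by $E_\kappa$) and the remaining factors --- all of order $\le\kappa-1$, hence covered by the decay estimate --- in $L^\infty$, and collecting the $\kappa-1$ copies of the decay gain from the coefficient $|\partial u|^{\kappa-j}$ and the $j-1$ differentiated factors, gives $\sum_{|\alpha|\le\kappa}\|Z^\alpha F(\partial u)(s)\|_{L^2}\lesssim(\<s\>^{-1/2}\ep)^{\kappa-1}\rho(c\<s\>^{-1/2}\ep)\,E_\kappa(s)$ along the bootstrap regime $\<s\>^{1/2}\|\partial u\|_\infty\lesssim\ep$. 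Gronwall's inequality then gives $E_\kappa(t)\lesssim\ep\exp\big(C\int_0^t(\<s\>^{-1/2}\ep)^{\kappa-1}\rho(c\<s\>^{-1/2}\ep)\,ds\big)$, and the substitution $\tau=\<s\>^{-1/2}$ rewrites the exponent as $C\ep^2\big[\mathcal{H}_{\kappa,2}(c\ep\<t\>^{-1/2})-\mathcal{H}_{\kappa,2}(c\ep)\big]$, using $p_c(2)=3$ and the definition of $\mathcal{H}_{\kappa,2}$.

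I expect the main obstacle to be arranging the pointwise decay with the fewest possible vector fields so that the quadratic-and-higher terms close using only $\kappa$ of them --- the delicate two-dimensional derivative count above --- since the bare rate $\<s\>^{-1/2}$ is non-integrable and the argument is kept alive solely by the $\rho$-weight. Granting it, the two cases are immediate. If $\int_0^1\rho(\tau)\tau^{\kappa-4}\,d\tau<\infty$, the exponent is at most $C\ep^2\mathcal{H}_{\kappa,2}(0^+)\ll1$ uniformly in $t$ once $\ep\le\ep_2$, so $E_\kappa(t)\lesssim\ep$ for all time, the bootstrap self-improves, and the local $H^{\kappa+1}$ solution extends globally. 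If $\int_0^1\rho(\tau)\tau^{\kappa-4}\,d\tau=\infty$, the same bound persists up to any $T$ for which the exponent remains below a fixed small threshold, that is for which $\ep^2\mathcal{H}_{\kappa,2}(c\ep\<T\>^{-1/2})\lesssim1$; solving through the inverse function yields $\<T\>^{1/2}\sim\ep\big[\mathcal{H}_{\kappa,2}^{inv}(c_4\ep^{-2}+\mathcal{H}_{\kappa,2}(c_5\ep))\big]^{-1}$, which is \eqref{eq-coro-Euclid2-lowerboundestimate} after squaring and relabeling constants. Finally I would confirm uniqueness and the asserted continuity of $\partial_t^i u$ in $H^{\kappa+1-i}$ by the usual difference-of-solutions energy argument, recovering the continuity of $u$ itself from the hypothesis $\|f\|_{L^2}<\infty$ via Duhamel.
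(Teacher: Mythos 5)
Your overall architecture (the full Klainerman vector fields on $\R^2$, commuting $\kappa$ of them, energy plus decay, the same criterion integral and the same inversion of $\mathcal{H}_{\kappa,2}$ for the lifespan) matches the paper, which runs a Picard iteration in $S_{\kappa,\Gamma,T}$ rather than your Gronwall bootstrap --- an inessential difference. The genuine gap is in your key decay input: the claimed bound $\|\partial Z^\beta u(t,\cdot)\|_{L^\infty}\lesssim\<t\>^{-1/2}E_\kappa(t)$ for all $|\beta|\le\kappa-1$. With only $\kappa$ fields in the energy, this asks for $L^\infty(\R^2)$ control of $\partial Z^{\kappa-1}u$ using a single additional vector field in $L^2$, which cannot hold: already at $t=0$ it would amount to an embedding of the form $\|v\|_{L^\infty(\R^2)}\lesssim\|\<x\>\nabla^{\le 1}v\|_{L^2}$, and the failure of $H^1(\R^2)\hookrightarrow L^\infty(\R^2)$ is a local (logarithmic) obstruction that spatial weights do not cure. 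The trace estimate (Proposition \ref{prop-traceestimate-Rn}) cannot ``economize one derivative'' for this purpose, because it yields only $\sup_{r}r^{1/2}\|\cdot\|_{L^4_\omega}$ --- an $L^\infty_r L^4_\omega$ bound with a spatial weight $r^{1/2}$, which is neither pointwise in the angle nor a $\<t\>^{-1/2}$ decay. Consequently, in the one genuinely critical term of the Leibniz expansion --- $\mu=\kappa=2$, i.e.\ $|\partial^2 F(\partial u)||\Gamma\partial u|^2$, where both factors have order exactly $\kappa-1$ --- your scheme of ``highest factor in $L^2$, the rest in $L^\infty$'' has no valid $L^\infty$ estimate to call upon. (For all other terms, where every undifferentiated-in-$L^2$ factor has order $\le\kappa-2$, your argument is fine.)

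The paper closes exactly this term without any pointwise bound on $\Gamma\partial u$, by a bilinear splitting (display \eqref{eq-prof-coro-Euclid2-kappa-mu-2}): on the region $|x|\lesssim\<t\>$ it puts both factors in $L^4_x$ and uses the $L^2\rightarrow L^4$ form of the Klainerman--Sobolev inequality (second part of Proposition \ref{prop-KSinequality}), which costs only one extra field per factor and gives $\<t\>^{-1/2}$ per factor; on the complementary region $|x|\gtrsim\<t\>$ it trades time decay for the spatial weight and estimates the product in $L^\infty_rL^4_\omega\times L^2_rL^4_\omega$, using the trace estimate for the first factor and Sobolev embedding on $\mathbb{S}^1$ for the second. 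If you replace your pointwise decay claim for the order-$(\kappa-1)$ factors by this interior $L^4\times L^4$ plus exterior mixed-norm treatment of the single critical term, the rest of your proof (the identification of the data norm, the Gronwall step, the two cases, and the derivation of \eqref{eq-coro-Euclid2-lowerboundestimate}) goes through essentially as written.
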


According to Theorem \ref{thm-3dim}, Remark \ref{coro-Euclidgeq3}, and Theorem \ref{coro-Euclid2}, as long as $\kappa>\frac{n}{2}$, the convergence of the integral 
\begin{equation}\label{eq-criterion}
	\int_{0}^{1}\frac{\rho(\tau)}{\tau}\tau^{\kappa-p_c(n)} d\tau
\end{equation}
foreshadows the existence of the global solution. And the divergence of the integral of \eqref{eq-criterion} implies a lower bound of the lifespan. For $n\geq3$, $\mathcal{K}=\overline{B_1}$, and $\kappa=1$, this criterion is still available for the radial solution and we should consider the following nonlinear problem
\begin{equation}\label{eq-critnonlin-radial}
	\left\{\begin{aligned}
		&\Box u(t,x)=F(\partial_t u,\partial_r u)&&, \ (t,x)\in (0,T)\times M, \ \\
		&u(t,x)=0&&, \ x\in \partial M,  \ t>0 , \\
		&u(0,x)=f(x), \ u_t(0,x)=g(x) &&, \ x\in M. 
	\end{aligned}
	\right.
\end{equation}
Here, $F\in C^\kappa(\{\mathbf{q}\in\R^2:|\mathbf{q}|\leq1\}),\  \kappa\geq1,  \ \partial_\mathbf{q}^{\leq\kappa} F(0)=0$.

\begin{theorem}\label{thm-radial}
	Let $n\geq3$, $\kappa=1$, $\mathcal{K}=\overline{B_1}$ and $(f,g)$ be radial functions. If $\int_{0}^{1}\rho(\tau)\tau^{-\frac{2}{n-1}-1} d\tau<\infty$, there exists a small positive constant $\ep_3$ such that the problem \eqref{eq-critnonlin-radial} has a unique radial global solution satisfying
	\begin{equation*}
		\partial_t^{i} u\in C([0,\infty);H_D^{2-i}(M)) , \ 0\leq i\leq1,
	\end{equation*}
	whenever the initial data satisfying the compatibility of order $2$, and
	\begin{equation*}
		\sum_{|\alpha|\leq1}\|\nabla^\alpha(\nabla f,g)\|_{L^2(M)}=\ep\leq\ep_3, \ \|f\|_{L^2(M)}<\infty.
	\end{equation*}
	Otherwise, if $\int_{0}^{1}\rho(\tau)\tau^{-\frac{2}{n-1}-1} d\tau=\infty$, there exist positive constants $c_7$, $c_8$, and $c_9$ such that we have a unique radial solution satisfying
	\begin{equation*}
		\partial_t^{i} u\in C([0,T_\ep];H_D^{2-i}(M)) , \ 0\leq i\leq1,
	\end{equation*}
	with
	\begin{equation}\label{eq-thm-radial-lowerboundestimate}
		T_\ep=c_9\ep^{\frac{2}{n-1}}\left[\mathcal{H}_{1,n}^{inv}(c_7\ep^{-\frac{2}{n-1}}+\mathcal{H}_{1,n}(c_8\ep))\right]^{-\frac{2}{n-1}}.
	\end{equation}
\end{theorem}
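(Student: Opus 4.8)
The plan is to follow the scheme behind Theorem~\ref{thm-3dim}, replacing the dimension-free decay used there by the decay that the radial structure makes available when $\kappa=1\le n/2$. Since $(f,g)$ and hence $u$ are radial and $\mathcal{K}=\overline{B_1}$, I would first pass to the reduced unknown $w=r^{(n-1)/2}u$, which on the exterior region $r\ge1$ solves the one-dimensional problem
\begin{equation*}
	\partial_t^2 w-\partial_r^2 w+\frac{(n-1)(n-3)}{4r^2}\,w=r^{\frac{n-1}{2}}F(\partial_t u,\partial_r u),\qquad w|_{r=1}=0,
\end{equation*}
with a nonnegative, short-range potential of order $r^{-2}$ for $n\ge3$. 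The angular vector fields $\Omega$ act trivially on radial functions, so the data norm $\sum_{|\alpha|\le1}\|\nabla^\alpha(\nabla f,g)\|_{L^2(M)}$ already controls everything needed at the level $\kappa+1=2$, and the compatibility of order $2$ supplies the regularity of $\partial_t^iu(0,\cdot)$ required to start the energy hierarchy.

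I would then assemble the two ingredients named in the abstract. The local energy (KSS-type) estimate for the Dirichlet–wave equation \eqref{eq-Diriwave-M} on the nontrapping exterior domain $M$ bounds a weighted space–time norm of $\partial u$ by the initial energy together with the space–time norm of the forcing $G=F(\partial u)$; the trace estimate on $\partial M$ enters to absorb the boundary terms created by the integration by parts and to exploit the Dirichlet condition. Combining the radial weighted Sobolev embedding (a Strauss-type radial lemma, giving the spatial decay $r^{-(n-1)/2}$ for $r\gtrsim\langle t\rangle$) with the local energy decay controlling the interior region $r\lesssim\langle t\rangle$, and commuting with $\partial_t$, one obtains the pointwise bound
\begin{equation*}
	\|\partial u(t)\|_{L^\infty(M)}\lesssim \langle t\rangle^{-\frac{n-1}{2}}\,\mathcal{E}(t),
\end{equation*}
the free-wave decay rate appearing in the heuristic \eqref{eq-intro-apriori-calculate}. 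Since $\partial^{\le\kappa}F(0)=0$ with $\kappa=1$, one has $|F(\partial u)|\lesssim|\partial u|\,\rho(|\partial u|)$, so the forcing inherits this decay with $\rho$ evaluated at a quantity of size $\langle t\rangle^{-(n-1)/2}\ep$.

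The argument then closes by a continuity (bootstrap) argument. Fixing an energy-type norm $X(T)$ that encodes the conserved energy, the local-energy weight, and the pointwise decay, I would posit $X(T)\le2C_0\ep$ and feed the resulting decay of $\partial u$ into the nonlinear estimate. Substituting $\tau=K_7\langle t\rangle^{-(n-1)/2}\ep$ converts the time integral of the forcing into
\begin{equation*}
	\ep^{\frac{2}{n-1}}\int_{K_7\ep\langle T\rangle^{-\frac{n-1}{2}}}^{K_7\ep}\frac{\rho(\tau)}{\tau}\,\tau^{1-p_c(n)}\,d\tau
	=\ep^{\frac{2}{n-1}}\Big[\mathcal{H}_{1,n}\big(K_7\ep\langle T\rangle^{-\frac{n-1}{2}}\big)-\mathcal{H}_{1,n}(K_7\ep)\Big],
\end{equation*}
up to a harmless power of $K_7$, which is exactly the quantity governing the criterion \eqref{eq-criterion} at $\kappa=1$. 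If $\int_0^1\rho(\tau)\tau^{-\frac{2}{n-1}-1}d\tau<\infty$ this is $O(\ep^{2/(n-1)})=o(1)$ uniformly in $T$, the bootstrap closes for every $T$, and one obtains the global solution. If the integral diverges, the same bound degrades as $T$ grows; solving $\ep^{2/(n-1)}\mathcal{H}_{1,n}(K_7\ep\langle T\rangle^{-(n-1)/2})\sim1$ for $T$ and inverting $\mathcal{H}_{1,n}$ yields the lifespan \eqref{eq-thm-radial-lowerboundestimate}, while local well-posedness on $[0,T_\ep]$ follows from the compatibility of order $2$.

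I expect the main obstacle to be the derivation of the sharp pointwise decay on the exterior domain. Over $\R^n$ one would obtain the rate $\langle t\rangle^{-(n-1)/2}$ from Klainerman–Sobolev using the full vector-field family, but the Lorentz boosts and the scaling field neither preserve $M$ nor respect the Dirichlet condition, so they cannot be commuted freely; this is precisely why the radial hypothesis and the ball obstacle are imposed. Recovering the decay instead demands splicing the radial Sobolev embedding in the exterior region against the local energy decay in the interior, uniformly in $t$, while verifying that the short-range potential $\frac{(n-1)(n-3)}{4r^2}$ in the reduced equation perturbs neither the energy identity nor the decay. A secondary difficulty is that $\kappa=1$ leaves no spare power of $|\partial u|$: all of the smallness must be extracted from the $\rho$ factor and the weights, so the estimates must be tight enough that the borderline integral \eqref{eq-criterion} is genuinely the dividing line, with no loss in the exponent of $\ep$.
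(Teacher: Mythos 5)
Your scheme hinges on the pointwise temporal decay estimate $\|\partial u(t)\|_{L^\infty(M)}\lesssim \langle t\rangle^{-(n-1)/2}\mathcal{E}(t)$, and this is a genuine gap: neither of the two ingredients you invoke can produce it. The radial trace/Sobolev estimate (Proposition \ref{prop-traceestimate-M}) gives only \emph{spatial} decay, $|\partial u(t,x)|\lesssim |x|^{-(n-1)/2}\|\partial u(t)\|_{H^1}$; near the obstacle (say $1\le|x|\le2$) this is mere boundedness, with no gain in $t$. The local energy norm $\|\cdot\|_{LE}$ is an $L^2_t$-integrated quantity: it says the interior energy is small \emph{on time average}, and cannot be upgraded to a bound at each fixed time --- a solution whose interior energy stays of size $\ep$ for all $t$ is perfectly compatible with a finite $LE$ norm. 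Genuine pointwise decay for exterior Dirichlet problems requires Lax--Phillips/Morawetz-type local energy \emph{decay} plus dispersive ($L^1\to L^\infty$) input, which is not available here at $H^2$ regularity with $F$ merely $C^1$; avoiding exactly this is the point of the paper's setup. Once that estimate is removed, your bootstrap, the Gronwall step, and the substitution $\tau=K_7\langle t\rangle^{-(n-1)/2}\ep$ all lose their foundation.

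The paper's actual proof establishes no temporal decay at all. It runs the iteration of Proposition \ref{prop-proof-3dim-iteration} in the space $S_{1,\partial,T}$ and estimates $F(\partial_t\tilde u,\partial_r\tilde u)$ directly in the dual norm $LE^*+L_t^1L_x^2$: on the region $|x|\lesssim\lambda$ the spatial smallness $\rho(C_{TR}\ep\langle r\rangle^{-(n-1)/2})$ is summed over dyadic annuli, and Lemma \ref{lem-seriessum} converts that sum into precisely the criterion integral $\int_{\lambda^{-(n-1)/2}}^{1}\rho(C_{TR}\ep\tau)\tau^{-p_c(n)}\,d\tau$; on the region $|x|\gtrsim\lambda$ one pays a factor $T\rho(C_{TR}\ep\lambda^{-(n-1)/2})$, and the splitting radius $\lambda$ is optimized (the $\inf_{\lambda>1}$ in the definition of $\Lambda$). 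In the divergent case, $\lambda_\ep$ is chosen so that the dyadic part equals a fixed small constant and $T_\ep$ so that $T_\ep\rho(C\ep\lambda_\ep^{-(n-1)/2})$ is also small; inverting $\mathcal{H}_{1,n}$ then yields \eqref{eq-thm-radial-lowerboundestimate}. Two further points: your reduction to the one-dimensional equation for $w=r^{(n-1)/2}u$ with potential $\frac{(n-1)(n-3)}{4r^2}$ is unnecessary in this scheme (the paper applies Proposition \ref{prop-localenergy} on $M$ as is) and would force you to re-derive local energy estimates for that potential; and existence up to the specific time $T_\ep$ is not a consequence of abstract local well-posedness --- it is the quantitative content of the theorem and requires tracking the $T$-dependence of $\Lambda$ as above.
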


\begin{remark}\label{coro-radial-geq4}
	Let $n\geq4$, $\mathcal{K}=\overline{B_1}$, and $2\leq\kappa \leq n/2$. There is a unique radial global solution to \eqref{eq-critnonlin-radial} satisfying
	\begin{equation*}
		\partial_t^{i} u\in C([0,\infty);H_D^{\kappa+1-i}(M)) , \ 0\leq i\leq\kappa,
	\end{equation*}
	whenever the radial initial data satisfying the compatibility of order $\kappa+1$, and
	\begin{equation*}
		\sum_{|\alpha|\leq\kappa}\|\nabla^\alpha(\nabla f,g)\|_{L^2(M)}=\ep\ll 1, \ \|f\|_{L^2(M)}<\infty.
	\end{equation*}
\end{remark}

Following the argument in \cite{2306.11478}, we can show that the lower bound estimates of the lifespan are sharp for sample choices of the nonlinear terms and initial data. Before the statement, we give a rigorous definition of the lifespan.
\begin{definition}[Lifespan]
The lifespan of the equation \eqref{eq-critnonlin-M} is the supremum of the set consisting of $T>0$ such that there exists a unique solution 
\begin{equation*}
	\partial_t^{i} u\in C([0,T];H_D^{2-i}(M)) , \ 0\leq i\leq1.
\end{equation*}

Assume $\ep$ is sufficiently small. For fixed $\phi, \ \psi\in C_c^\infty(M)$, let $T(\ep)$ denote the lifespan of the equations \eqref{eq-critnonlin-M} with initial data $f=\ep\phi, \ g=\ep\psi$.
\end{definition}

\begin{theorem}[Sharpness of the lifespan estimates]\label{thm-lifespan-sharp}
	Let $n\geq3$, $a\in(0,\frac{2}{n-1}]$, and $M=\{x:|x|>1\}$. 
	Consider the following equation,
	\begin{equation}\label{eq-blowup-upperbound-n}
		\left\{\begin{aligned}
			&\Box u(t,x)=|\partial_t u|^{1+a}\mu(|\partial_t u|)(t,x)\coloneq F_a(\partial_tu)&&, \ (t,x)\in (0,T)\times M, \ \\
			&u(t,x)=0&&, \ x\in \partial M,  \ t>0 , \\
			&u(0,x)=\ep\phi(x), \ \partial_tu(0,x)=0 &&, \ x\in M.
		\end{aligned}
		\right.
	\end{equation}
	Here, the initial datum $\phi\in C_c^\infty(M)$ is non-negative and radial with $\supp \phi=\{x:2\leq|x|\leq3\}$. The function $\mu \in C([0,1])\cap C^1((0,1])$ such that
	\begin{equation}\label{eq-prop-upperboundestimate-mucondition}
		\mu(0)=0, \  0\leq\tau\mu(\tau)\lesssim\mu(\tau), \ \int_{0}^{1}\mu(\tau)\tau^{a-p_c(n)}=\infty,
	\end{equation}
	and $F_a(\tau)$ is a convex function on $[-1,1]$. There exist positive constants $k_1$, $k_2$, and $k_3$ such that, when $\ep$ is sufficiently small, the lifespan $T(\ep)$ of the solution to \eqref{eq-blowup-upperbound-n} has the upper bound,
	\begin{equation}\label{eq-blowup-prop-upperbound-sharp}
		T(\ep)\leq k_3\ep^{\frac{2}{n-1}}[\mathcal{H}_{1,n}^{inv}(k_1\ep^{-\frac{2}{n-1}}+\mathcal{H}_{1,n}(k_2\ep))]^{-\frac{2}{n-1}}.
	\end{equation}
	Furthermore, estimates \eqref{eq-blowup-prop-upperbound-sharp} with $\mathcal{H}_{1,n}$ replaced by $\mathcal{H}_{2,2}$ is still hold when $n=2$, $a\in [1,2]$, $M=\R^2$ and $\mu\in C([0,1])\cap C^2((0,1])$ satisfying
	\begin{equation}\label{eq-prop-upperboundestimate-mucondition-dim2}
		\mu(0)=0, \ \mu'(\tau)\geq0, \  \int_{0}^{1}\mu(\tau)\tau^{a-3}=\infty, \ \text{and} \  |\tau^i\mu^{(i)}(\tau)|\lesssim\mu(\tau), \ 1\leq i \leq 2.
	\end{equation}
\end{theorem}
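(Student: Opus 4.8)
The plan is to run a test-function argument of Yordanov--Zhang type, adapted to the Dirichlet exterior problem, and then convert the resulting integral inequality into an ODE whose threshold time is governed by $\mathcal{H}_{1,n}$. First I would construct a positive radial weight $\Phi\in C^\infty(M)$ solving $\Delta\Phi=\Phi$ on $M$ with $\Phi|_{\partial M}=0$; on the exterior of the unit ball this is the modified Bessel profile that grows like $e^{|x|}|x|^{-(n-1)/2}$ as $|x|\to\infty$ and vanishes at $|x|=1$, obtained as an explicit combination of the two radial solutions. Setting $\Psi(t,x)=e^{-t}\Phi(x)$ one has $\Box\Psi=0$, $\Psi_t=-\Psi$, $\Delta\Psi=\Psi$, and $\Psi|_{\partial M}=0$. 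Introduce $G(t)=\int_M (u+u_t)\Psi\,dx$. The decisive point for the obstacle problem is that when I apply Green's identity to $\int_M(\Delta u\,\Psi-u\,\Delta\Psi)\,dx$, the two boundary contributions $\int_{\partial M}(\partial_\nu u\,\Psi-u\,\partial_\nu\Psi)\,dS$ both vanish --- the first because $\Psi|_{\partial M}=0$, the second because $u|_{\partial M}=0$ --- so the computation is identical to the one on $\R^n$ and yields $G'(t)=\int_M F_a(u_t)\Psi\,dx\ge0$.

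Next I would produce the lower bounds needed to close the inequality. By finite speed of propagation $u(t,\cdot)$ is supported in the light cone $\{1\le|x|\le t+3\}$, over which $m(t):=\int \Psi\,dx\sim\langle t\rangle^{(n-1)/2}$, because the exponential factors in $e^{-t}\Phi$ cancel and only the outer-edge polynomial survives. Writing $G_0=\int u\Psi$, $G_1=\int u_t\Psi$, the identities $G_0'=G_1-G_0$ and $G_1'=G_0-G_1+\int F_a(u_t)\Psi$ (again with no boundary terms) let me solve for $G_1$; using $u(0)=\ep\phi\ge0$ and $u_t(0)=0$ I obtain the frame estimate $G_1(t)\ge\tfrac12\big(G(t)-G(0)\big)$ together with $G(t)\ge G(0)=\ep\int\phi\Phi>0$. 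Controlling the derivative functional $\int u_t\Psi$ by $G$ is the part special to the Glassey (first-order) nonlinearity, and it is where the sign and positivity of the data enter.

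With these in hand, convexity of $F_a$ and Jensen's inequality against the probability measure $\Psi\,dx/m(t)$ give $G'(t)\ge m(t)\,F_a\!\big(G_1(t)/m(t)\big)\ge m(t)\,F_a\!\big(cG(t)/m(t)\big)$ once $G(t)\ge 2G(0)$, using that $F_a$ is even and increasing on $[0,\infty)$. Substituting $F_a(s)=s^{1+a}\mu(s)$ and changing variables by $\tau=c\,G/m\sim\ep\langle t\rangle^{-(n-1)/2}$ --- the natural amplitude at time $t$ --- turns the accumulated $\int_0^T m^{-a}\mu\,dt$ into $\mathcal{H}_{1,n}(\sigma)-\mathcal{H}_{1,n}(c\ep)$ with $\sigma\sim\ep\,T^{-(n-1)/2}$. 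The divergence hypothesis $\int_0^1\mu(\tau)\tau^{a-p_c(n)}\,d\tau=\infty$ in \eqref{eq-prop-upperboundestimate-mucondition} forces the differential inequality to reach its threshold in finite time, and solving $\mathcal{H}_{1,n}(\sigma)=k_1\ep^{-2/(n-1)}+\mathcal{H}_{1,n}(k_2\ep)$ for $\sigma$ and unwinding $T$ reproduces exactly \eqref{eq-blowup-prop-upperbound-sharp}; the bounds $0\le\tau\mu'(\tau)\lesssim\mu(\tau)$ keep $\mathcal{H}_{1,n}$ monotone and identify $\rho(\tau)\sim\tau^a\mu(\tau)$, so the ODE threshold matches the criterion integral.

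Finally, for the two-dimensional statement ($M=\R^2$, $a\in[1,2]$, $\kappa=2$) the same scheme runs with the whole-space weight $\Phi_0(x)=\int_{\Sp^{1}}e^{x\cdot\omega}\,d\omega$, for which no boundary term arises at all; here $m(t)\sim\langle t\rangle^{1/2}$, and the stronger hypotheses \eqref{eq-prop-upperboundestimate-mucondition-dim2} on $\mu''$ are precisely what is needed to bound $\partial^2 F_a$, verify $\rho(\tau)\sim\tau^{a-1}\mu(\tau)$, and retain convexity, producing \eqref{eq-blowup-prop-upperbound-sharp} with $\mathcal{H}_{2,2}$ in place of $\mathcal{H}_{1,n}$. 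I expect the main obstacle to be twofold: constructing $\Phi$ on the exterior domain with the sharp asymptotics that give $m(t)\sim\langle t\rangle^{(n-1)/2}$ (the step that genuinely distinguishes the obstacle case from $\R^n$, though the radial ball makes $\Phi$ explicit), and establishing the frame estimate $G_1\gtrsim G$ that links the derivative functional to $G$, which is the characteristic difficulty of derivative nonlinearities and must be carried out carefully enough to preserve the constants feeding into $\mathcal{H}_{1,n}$.
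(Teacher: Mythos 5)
Your proposal is correct in substance, but it follows a genuinely different route from the paper. The paper localizes by finite speed of propagation, integrates out the transverse variables $\tilde{x}$ to reduce to a one--dimensional wave equation for $u^*(t,x_1)$ on $x_1>1.5$ (where the slicing never meets the obstacle, because the data live in $\{2\le|x|\le3\}$), and applies d'Alembert's formula along the characteristic $z\mapsto(z-2,z)$; positivity of the data contribution $C_\phi\ep$, the bound $\mathcal{U}(z)\le M(z)\sim z^{(n-1)/2}$ coming from $\|\partial_t u\|_\infty\le1$, and Jensen's inequality on unit slabs produce the Riccati-type integral inequality that is then fed into the ODE analysis of Chen--Palmieri. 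You instead run a Yordanov--Zhang functional argument adapted to the Dirichlet problem: your key observation that \emph{both} boundary terms in Green's identity vanish (one from $\Psi|_{\partial M}=0$, the other from $u|_{\partial M}=0$) is correct; the Bessel-type weight $\Phi$ on $\{|x|>1\}$ with the stated positivity and asymptotics does exist (a combination of $I_\nu$ and $K_\nu$, $\nu=(n-2)/2$, vanishing at $r=1$); and your identities $G'=\int F_a(u_t)\Psi$, $G_0'=G_1-G_0$, $G_1'=G_0-G_1+\int F_a(u_t)\Psi$ are all right, as is $m(t)\sim\langle t\rangle^{(n-1)/2}$ over the light cone. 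Both methods land on the same Riccati inequality and cite the same final ODE lemma, so the lifespan bounds agree. What the paper's route buys: no eigenfunction is needed, no estimate of $\int u_t\Psi$ in terms of $G$ is needed, and the kickstart $C_\phi\ep$ comes for free from d'Alembert. What your route buys: it uses the boundary condition itself rather than arranging that the characteristic triangle avoid the obstacle, so in principle it extends to non-spherical obstacles whenever a positive $\Phi$ with $\Delta\Phi=\Phi$, $\Phi|_{\partial M}=0$ can be constructed; the price is precisely the ``frame estimate'', which is the step special to derivative nonlinearities.

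One step of yours must be restated to avoid a circularity. As written, your Riccati inequality $G'\ge m\,F_a(cG/m)$ is activated only ``once $G\ge 2G(0)$'', but nothing in the estimates you state forces $G$ ever to reach $2G(0)$: the inequalities $G'\ge0$ and $G_1\ge\frac{1}{2}(G-G(0))\ge0$ are perfectly consistent with $G\equiv G(0)$ for all time, in which case no blow-up conclusion follows. The fix lies inside the derivation you yourself propose: since $G_1=G-G_0$, the first identity reads $G_0'=G-2G_0$, and Duhamel together with the monotonicity of $G$ and $G_0(0)=G(0)$, $G_1(0)=0$ gives $G_0(t)\le e^{-2t}G(0)+\frac{1}{2}\left(1-e^{-2t}\right)G(t)$, hence the \emph{unconditional} frame estimate $G_1(t)\ge\frac{1}{2}\left(1+e^{-2t}\right)G(t)-e^{-2t}G(0)\ge\frac{1}{2}\left(1-e^{-2t}\right)G(t)$. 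Thus for $t\ge1$ one has $G_1\ge cG$ with $c=(1-e^{-2})/2$, and therefore $G'\ge m\,F_a(cG/m)$ holds for all $t\ge 1$ with starting value $G(1)\ge G(0)\gtrsim\ep$, no threshold assumption required (note also $cG/m\le G_1/m\le 1$, so the argument of $F_a$ and $\mu$ stays in their domain). With the frame estimate stated in this unconditional form, your scheme closes and yields \eqref{eq-blowup-prop-upperbound-sharp} exactly as claimed.
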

\begin{remark}
	The requirements \eqref{eq-prop-upperboundestimate-mucondition} and \eqref{eq-prop-upperboundestimate-mucondition-dim2} for $\mu$ are to insure the local existence of the solution $\partial_t^{i} u\in CH_D^{2-i} , \ 0\leq i\leq1$.
\end{remark}

To conclude this section, we give some typical examples of $F(\partial u)$.
\begin{itemize}
	\item Example 1: $F(\partial u)=|\partial u|^p$, $p>1$ for $n\geq3$ and $p>2$ for $n\geq2$. We can choose
	\begin{equation*}
		\kappa=\left\{\begin{aligned}
			&\text{the integer part of} \ p, \ p \ \text{is not an integer}\\
			&p-1, \ p \ \text{is an integer}.
		\end{aligned}\right.
	\end{equation*}
	Then, the criterion is whether $\int_{0}^{1}\tau^{p-p_c(n)-1}$ is infinity or not, which coincides with the classical results. For $p<p_c(n)$, $H_{\kappa,n}(s)\sim s^{p-p_c(n)}-1$, hence, for $\ep\ll1$,
	\begin{equation*}
		\ep^{\frac{2}{n-1}}\left[\mathcal{H}_{\kappa,n}^{inv}(K_9\ep^{-\frac{2}{n-1}}+\mathcal{H}_{\kappa,n}(K_{10}\ep))\right]^{-\frac{2}{n-1}} \sim \ep^{\frac{2(p-1)}{2-(n-1)(p-1)}},
	\end{equation*}
	where $K_9,K_{10}$ are positive constants given in the lifespan estimates above;
	for $p=p_c(n)$,
	\begin{equation*}
		H_{\kappa,n}(s)\sim \ln(s^{-1}),
	\end{equation*}
	and then, for $\ep\ll1$
	\begin{equation*}
		\ln\left(\ep^{\frac{2}{n-1}}\left[\mathcal{H}_{\kappa,n}^{inv}(K_9\ep^{-\frac{2}{n-1}}+\mathcal{H}_{\kappa,n}(K_{10}\ep))\right]^{-\frac{2}{n-1}}\right)\sim \ep^{-\frac{2}{n-1}}.
	\end{equation*}
	\item Example 2: $F(\partial u)=|\partial u|^{p_c(n)}\mu(|\partial u|)$ with $\mu$ satisfying \eqref{eq-prop-upperboundestimate-mucondition} or \eqref{eq-prop-upperboundestimate-mucondition-dim2}.
	We can select $\kappa$ to be the integer part of $p_c(n)$. Then, the criterion is whether $\int_{0}^{1}\mu(\tau)\tau^{-1}$ is infinity or not, which coincides with the blow-up results in \cite{2306.11478}. By direct calculations,
	\begin{equation*}
		H_{\kappa,n}(s)\sim \int_{s}^{1}\mu(\tau)\tau^{-1}d\tau,
	\end{equation*}
	hence we extend the existence results in \cite{2306.11478}.
	\item Example 3: $F(\partial u)=|\partial u|\frac{1}{\log(|\partial u|^{-1})}$ with $n\geq3$ and $M=\{x:|x|>1\}$. By Theorem \ref{thm-radial}, given radial initial data, we can obtain a unique radial solution locally and provide the lower bound estimate of the lifespan. However, the sharp upper bound estimate is still open.
\end{itemize}

\section{Preliminary}
\subsection{Notations}
In this subsection, we list the notations used in this paper.

\begin{itemize}
	\item Besides $(t,x)=(x_0, x_1, x_2, \dots, x_n)\in\R^{1+n}$, we will use polar coordinates $(t,r,\omega)\in \R\times[0,\infty)\times\mathbb{S}^{n-1}$.
	\item Denote $\partial_i=\partial/\partial x_i$, $0\leq i\leq n$, with the abbreviation $\partial=(\partial_0, \partial_1, \dots, \partial_n)=(\partial_t, \nabla)$. We will also need $\partial_r=\frac{x}{r}\cdot\nabla$ and
	\begin{gather*}
		L_j=t\partial_j+x_j\partial_0 \  1\leq j \leq n, \\
		\Omega_{ij}=x_i\partial_j-x_j\partial_i \ 1\leq i<jk\leq n, \\
		S=t\partial_0+x\cdot\nabla. 
	\end{gather*}
	We denote
	\begin{gather*}
			L=(L_1,\dots,L_n), \ \Omega=(\Omega_1,\dots,\Omega_{n(n-1)/2}),\\
			\Gamma=(\partial,\Omega,L,S) , \ Y=(\nabla,\Omega),  \ Z=(\partial,\Omega).
	\end{gather*}
	Hereby in this notation list, $X=(X_1,X_2,\dots,X_\nu)$ will be one of the collections of vector fields, $\partial_t$, $\nabla$, $\partial$, $\Gamma$, $\Omega$, and $Z$.
	\item For multi-indices, $\alpha\in\N^{\nu}$, $X^\alpha=X_1^{\alpha_1}X_2^{\alpha_2}\cdots X_\nu^{\alpha_\nu}$ and $|\alpha|=\sum_1^\nu \alpha_i$. For any $k\in\N$, $X^{\leq k}$ and $X^{k}$ mean the collection of the vector fields $(X^\alpha)_{|\alpha|\leq k}$ and $(X^\alpha)_{|\alpha|= k}$. 
	\item With the Dirichlet boundary condition, we define $\dot{H}_D^1(M)$ as the closure of $f\in C_c^\infty(M)$, with respect to the norm
	\begin{equation*}
		\|f\|_{\dot{H}_D^1(M)}=\|\nabla f\|_{L^2(M)}.
	\end{equation*}
	If $M=\R^n$, $\dot{H}_D^1=\dot{H}^1$ is the closure of $C_c^\infty(\R^n)$ with respect to the $\dot{H}^1$ norm. 
	Also, we denote $H_D^i(M)=\dot{H}_D^1(M)\cap H^i(M)$ for $i\geq1$ and $H_D^0(M)=L^2(M)$. 
	Space $H_0^1(M)$ is the closure of $C_c^\infty(M)$ with respect to the $H^1$ norm.
	Space $(H^{-1}(M))_{w*}$ is the dual space of $H_0^1(M)$ equipped with weak* topology  .
	\item A function $u(t,x)$ belonging to $C_t(L_{x,loc}^2(M))$ means that for any $\sigma\in C_c^\infty(\R^n)$, $\sigma u\in C_t(L_{x}^2(M))$. We denote $C_t^i((H^{-1}(M))_{w*})$ the $i$-times continuous differentiable $(H^{-1}(M))_{w*}$-valued function space. 
	\item For a Banach space $F$, $C_b([0,\infty);F)\coloneq C\cap L^\infty([0,\infty);F)$.
	\item The space $l^s_q(F)(1\leq q\leq\infty)$ means 
	\begin{equation*}
		\|u\|_{l^s_q(F)}=\|(\|2^{js}\Phi_j(x)u(t,x)\|_F)\|_{l_{j\geq0}^q},
	\end{equation*}
	for a partition of unity subordinate to the (inhomogeneous) dyadic (spatial) annuli, $\sum_{j\geq0}\Phi_j(x)=1$. A typical choice could be a radial, non-negative $\Phi_0(x)\in C_0^\infty$ with value 1 for $|x|\leq1$ and 0 for $|x|\geq2$, and $\Phi_j(x)=\Phi_0(2^{-j}x)-\Phi_0(2^{1-j}x)$ for $j\geq1$. Take $\Phi_{-1}=0$ and then $\Phi_j(\Phi_{j-1}+\Phi_j+\Phi_{j+1})=\Phi_j$, $j\geq0$.
	\item We will use the norm in polar coordinates $(r,\omega)$,
	\begin{equation*}
		\|f\|_{L_t^{q_1}L_r^{q_2}L_\omega^{q_3}}=\left\|\left(\int_{0}^{\infty}\|f(t,r\omega)\|_{L_\omega^{q_3}}^{q_2}r^{n-1}dr\right)^{\frac{1}{q_2}}\right\|_{L^{q_1}(t>0)},
	\end{equation*}
	with trivial modification for the case $q_2=\infty$, where $L_\omega^{q_3}$ is the standard Lebesgue space on the sphere $\mathbb{S}^{n-1}$.
	\item The energy norm is
	\begin{equation*}
		\|u\|_E=\|u\|_{E_0}=\|\partial u\|_{L_t^\infty L_x^2((0,T)\times M)}.
	\end{equation*}
	For a collection of vector fields $X$, the energy norm of order $m$ is
	\begin{equation*}
		\|u\|_{E_{m,X}}=\sum_{|\alpha|\leq m}\|X^\alpha u\|_E\eqcolon\|X^{\leq m} u\|_E.
	\end{equation*}
	Also, we use $\|\cdot\|_{LE}$ to denote the local energy norm
	\begin{equation*}
			\|u\|_{LE}=\|\partial u\|_{l_\infty^{-\half}L_t^2 L_x^2((0,T)\times M)}+\|u/r\|_{l_\infty^{-\half}L_t^2 L_x^2((0,T)\times M)}.
	\end{equation*}
	and the dual norm $LE^*=l_1^{\half}L_t^2 L_x^2((0,T)\times M)$.
	Similarly, for a collection of vector fields $X$, the local energy norm of order $m$ is
	\begin{equation*}
		\|u\|_{LE_{m,X}}=\sum_{|\alpha|\leq m}\|X^\alpha u\|_{LE}\eqcolon\|X^{\leq m} u\|_{LE}.
	\end{equation*} 
	\item $\|u\|_{F_1+F_2}=\inf_{u=u_1+u_2}(\|u_1\|_{F_1}+\|u_2\|_{F_2})$, $\|u\|_{F_1\cap F_2}=\max\{\|u\|_{F_1},\|u\|_{F_2}\}$.
	\item For a subset $L\subset\R^n$, $\chi_L$ is the characteristic function of $L$ and $\bar{\chi}_L=1-\chi_L$. For $s\geq0$, denote $B_s\subset\R^n$ the open ball centered at the origin with a radius $s$, $\chi_s$ the characteristic function of the set $B_{\<s\>}$ and $\bar{\chi}_s=1-\chi_s$. 
	\item We will use $A\lesssim B$ to stand for $A\leq CB$ where the constant $C$ may change line to line. Then, $A\sim B$ stands for $A\lesssim B\lesssim A$. In addition, when denoting by $(a)_+$ for $a\in\R$, we mean the relevant estimate holds for $a+\delta$ for sufficiently small $\delta>0$.
\end{itemize}

\subsection{Estimates}
In this subsection, we give some estimates.

\begin{proposition}[\rm{Klainerman-Sobolev Inequality}]\label{prop-KSinequality}
	If $1\leq p <\infty$ and $s>n/p$, then the inequality
	\begin{equation*}
		(1+|t|+|x|)^\frac{n-1}{p}(1+||t|-|x||)^\frac{1}{p}|v(t,x)|\leq C_{KS}\|\Gamma^{\leq s}v(t,\cdot)\|_{L_x^p}
	\end{equation*}
	holds. If $1\leq p<q<\infty$ and $\frac{1}{q}\geq\frac{1}{p}-\frac{s}{n}$, then 
	\begin{equation*}
		\|\chi_tv(t,\cdot)\|_{L_x^q}\lesssim(1+|t|)^{-n(\frac{1}{p}-\frac{1}{q})}\|\Gamma^{\leq s}v(t,\cdot)\|_{L_x^p}.
	\end{equation*}
\end{proposition}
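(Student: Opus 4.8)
The plan is to prove both inequalities by the classical rescaling method: localize $v(t,\cdot)$ to regions adapted to the hyperbolic geometry, on each region change variables so that the commuting fields $\Gamma=(\partial,\Omega,L,S)$ become, up to uniformly bounded coefficients, coordinate derivatives on the unit ball, and then invoke the ordinary Sobolev embedding $W^{s,p}(B_1)\hookrightarrow L^\infty(B_1)$ (for $s>n/p$) or $W^{s,p}(B_1)\hookrightarrow L^q(B_1)$ (for $1/q\ge1/p-s/n$). By time reflection we may assume $t\ge0$, and we write $r=|x|$. The weight $\<t+r\>^{(n-1)/p}\<t-r\>^{1/p}$ should emerge as a power of the Jacobian of the adapted change of variables, the exponents $n-1$ and $1$ recording the $n-1$ directions tangent to the light cone and the single transversal (radial/null) direction.

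For the pointwise bound I would first record the two algebraic facts that drive everything. The rotations are purely spatial with $|\Omega v|\sim r|\nabla_\omega v|$, so $\Omega$ supplies $n-1$ tangential directions of length scale $r\sim\<t+r\>$ near the cone. For the radial direction, combining $S=t\partial_t+r\partial_r$ with $\omega\cdot L=t\partial_r+r\partial_t$ (where $\omega=x/r$) and eliminating $\partial_t$ gives $(t^2-r^2)\partial_r=t(\omega\cdot L)-rS$; since $t^2-r^2=(t-r)(t+r)$, this exhibits $\<t-r\>\partial_r$ as a combination of $L,S$ with coefficients bounded wherever $|t-r|\gtrsim1$. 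I would then cover the slice by an interior/far region where $\<t+r\>\sim\<t-r\>$, a near-cone region $\{r\sim t,\ |t-r|\gtrsim1\}$, and the unit collar $\{|t-r|\lesssim1\}$. Away from the cone a single dilation by $\<t+r\>\sim\<t-r\>$ reduces matters to Sobolev on $B_1$ and produces the weight $\<t+r\>^{n/p}$; near the cone I would rescale the $n-1$ angular variables by $\<t+r\>$ (using $\Omega$) and the radial variable by $\<t-r\>$ (using the $L,S$ combination above), while in the unit collar the transversal direction is handled directly by the unit-scale spatial derivatives $\nabla$ (contained in $\partial$), where $\<t-r\>\sim1$. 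On each region the change of variables has Jacobian $\sim\<t+r\>^{n-1}\<t-r\>$, so pulling the $L^p$ norm back and applying $W^{s,p}(B_1)\hookrightarrow L^\infty(B_1)$ yields exactly $|v(t,x)|\lesssim\<t+r\>^{-(n-1)/p}\<t-r\>^{-1/p}\|\Gamma^{\le s}v(t,\cdot)\|_{L^p}$.

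The second inequality is actually simpler, because the cutoff $\chi_t$ restricts to $\{|x|\le\<t\>\}$ and so removes the delicate near/beyond-cone analysis. On this interior region I would use the standard fact that $\<t\>^{|\alpha|}\partial^\alpha v$ is controlled by $\Gamma^{\le|\alpha|}v$ with uniformly bounded coefficients for $|x|\lesssim\<t\>$ (from $\<t\>\partial_j=L_j-x_j\partial_t$ and $\<t\>\partial_t=S-x\cdot\nabla$, solved as a linear system with bounded inverse when $|x|\lesssim\<t\>$). Setting $x=\<t\>y$ and $w(y)=v(t,\<t\>y)$ on $B_1$, the embedding $W^{s,p}(B_1)\hookrightarrow L^q(B_1)$ combined with the dilation identities $\|\cdot\|_{L^p_y(B_1)}=\<t\>^{-n/p}\|\cdot\|_{L^p_x}$ and $\|w\|_{L^q_y(B_1)}=\<t\>^{-n/q}\|\chi_tv\|_{L^q_x}$ gives the scaling factor $\<t\>^{n/q-n/p}=(1+t)^{-n(1/p-1/q)}$, which is the claim.

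The main obstacle is the near-cone estimate in the first part. The difficulty is twofold: verifying that the adapted angular/radial frame is expressible through $\Gamma$ with coefficients bounded uniformly on each rescaled unit region, and matching the transition between the region $|t-r|\gtrsim1$, where the radial direction is controlled by the $L,S$ combination with weight $\<t-r\>$, and the unit collar $|t-r|\lesssim1$, where the coefficient $(t^2-r^2)^{-1}$ degenerates and one must instead use $\nabla$ at unit scale. Managing these overlapping charts and checking that the counting of scales is consistent across them, so that the two weight exponents $(n-1)/p$ and $1/p$ are reproduced uniformly, is the technical heart of the proof; everything else is the standard Sobolev embedding on $B_1$ together with bookkeeping of Jacobians.
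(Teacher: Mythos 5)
Your treatment of the first (pointwise) inequality follows the classical Klainerman rescaling argument, which is precisely the route of the sources the paper cites for Proposition \ref{prop-KSinequality} (the paper itself gives no proof, only the references \cite{MR2455195,MR3729480}): the identity $(t^2-r^2)\partial_r=t(\omega\cdot L)-rS$, the three-region decomposition, and the Jacobian count $\langle t+r\rangle^{n-1}\langle t-r\rangle$ are the standard ingredients, and you correctly locate the remaining technical work in the uniform boundedness of the coefficients on the rescaled blocks. That half of the proposal is sound as a sketch.

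The second inequality is where there is a genuine gap, and it sits exactly at the step you treat as routine: the claim that $t\partial_j=L_j-x_j\partial_t$ and $t\partial_t=S-x\cdot\nabla$ can be ``solved as a linear system with bounded inverse when $|x|\lesssim\langle t\rangle$'' (incidentally these identities hold with $t$, not $\langle t\rangle$, but that is harmless). Eliminating $\partial_t$ gives
\[
\Bigl(I-\frac{xx^{T}}{t^{2}}\Bigr)(t\nabla v)=Lv-\frac{x}{t}\,Sv,
\]
and $\det\bigl(I-\frac{xx^{T}}{t^{2}}\bigr)=1-|x|^{2}/t^{2}$ vanishes on the light cone $|x|=t$, with the inverse blowing up like $t/(t-|x|)$. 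Since the paper's cutoff $\chi_t$ is the characteristic function of $B_{\langle t\rangle}$ and $\langle t\rangle>t$, the region of integration \emph{contains} the cone, so your rescaled function $w(y)=v(t,\langle t\rangle y)$ is not controlled in $W^{s,p}(B_1)$ near $|y|\approx1$, and the argument collapses exactly there. Moreover this cannot be patched, because with this cutoff the stated inequality is false: take $v(t,x)=\phi(|x|-t)$ with $\phi\in C_c^\infty((-1,1))$, $\phi\equiv1$ on $[-1/2,1/2]$. Every field degenerates on the cone ($\Omega v=0$, $Sv=(r-t)\phi'$, $L_jv=\omega_j(t-r)\phi'$, and inductively each $\Gamma^{\alpha}v$ stays uniformly bounded and supported in $\{\,||x|-t|\le1\,\}$), so $\|\Gamma^{\le s}v(t,\cdot)\|_{L^p}\lesssim t^{(n-1)/p}$ while $\|\chi_t v(t,\cdot)\|_{L^q}\gtrsim t^{(n-1)/q}$; the claimed estimate would then force $t^{1/p-1/q}\lesssim1$ as $t\to\infty$, contradicting $q>p$. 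Your linear-system argument is valid exactly on regions $\{|x|\le ct\}$ with fixed $c<1$, and that is how the classical interior estimate is stated and proved in the cited literature (the cutoff there is to a ball such as $\{|x|\le(1+t)/2\}$, strictly inside the cone). On all of $B_{\langle t\rangle}$ the best decay one can have is $(1+t)^{-(n-1)(1/p-1/q)}$ — the example above shows it is sharp, and it follows from integrating the pointwise inequality when $s>n/p$. That weaker version is still sufficient for the only place the second inequality is invoked, namely the bound \eqref{eq-prof-coro-Euclid2-kappa-mu-2} in the proof of Theorem \ref{coro-Euclid2}, where the exterior trace-estimate term is already only of size $\langle t\rangle^{-1/2}$; but as written, neither your proof of the second inequality nor the inequality itself goes through.
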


See \cite[Chapter \uppercase\expandafter{\romannumeral2}]{MR2455195} and \cite[Theorem 3.4.1, Theorem 3.4.2]{MR3729480} for the proof.

We will need the variant of the Sobolev embedding.

\begin{lemma}[\rm{Sobolev embedding}]\label{lem-Sobolev-rotaition}
	Let $n\geq2$. For $k\geq\frac{n}{2}-\frac{n}{q}$ with $q\in[2,\infty)$, we have
	\begin{equation*}
		\|\<r\>^{(n-1)(\half-\frac{1}{q})}u\|_{L^q(M)}\lesssim\sum_{|\alpha|\leq k}\|Y^\alpha u\|_{L^2(M)}.
	\end{equation*}
	Moreover, we have
	\begin{equation*}
		\|\<r\>^{\frac{n-1}{2}}u\|_{L^\infty(M)}\leq C_S\sum_{|\alpha|\leq \frac{n+2}{2}}\|Y^\alpha u\|_{L^2(M)}.
	\end{equation*}
\end{lemma}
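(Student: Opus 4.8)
The plan is to reduce the weighted inequality to an ordinary, \emph{unweighted} Sobolev embedding by splitting $M$ into a bounded piece and an obstacle-free exterior, and by flattening the radial measure on the latter. Since $\mathcal K\subset\overline{B_1}$, I would bound $\|\langle r\rangle^{(n-1)(\frac12-\frac1q)}u\|_{L^q(M)}$ by its contributions over $M\cap B_2$ and over $\{|x|\geq1\}$. On the bounded, smooth domain $M\cap B_2$ the weight is comparable to $1$, so it suffices to invoke the ordinary Sobolev embedding $W^{k,2}(M\cap B_2)\hookrightarrow L^q(M\cap B_2)$ (valid for $q<\infty$ when $k\geq n(\frac12-\frac1q)$, and into $L^\infty$ when $k>n/2$); because $Y$ contains $\nabla$ and the coefficients of $\Omega$ are bounded there, the $W^{k,2}$-norm is controlled by $\sum_{|\alpha|\leq k}\|Y^\alpha u\|_{L^2(M)}$.

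The essential region is the exterior $\{|x|\geq1\}$, which is free of the obstacle. Here I would pass to polar coordinates and set $U(r,\omega)=r^{(n-1)/2}u(r,\omega)$, so that the weighted measure becomes the flat one: $\|\langle r\rangle^{(n-1)(\frac12-\frac1q)}u\|_{L^q(\{r\geq1\})}\sim\|U\|_{L^q(\mathcal C)}$ and $\|\langle r\rangle^{(n-1)/2}u\|_{L^\infty(\{r\geq1\})}\sim\|U\|_{L^\infty(\mathcal C)}$, where $\mathcal C=[1,\infty)\times\mathbb{S}^{n-1}$ is an $n$-dimensional cylinder carrying the flat product measure $dr\,d\omega$. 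The virtue of this substitution is that $\partial_r U=r^{(n-1)/2}(\partial_r u+\tfrac{n-1}{2r}u)$ and $\Omega U=r^{(n-1)/2}\Omega u$, so each $(\partial_r,\Omega)^\alpha U$ is $r^{(n-1)/2}$ times a combination of $(\partial_r,\Omega)^{\leq|\alpha|}u$ with coefficients bounded for $r\geq1$; returning through the measure and writing $\partial_r=\tfrac{x}{r}\cdot\nabla$ (and absorbing the harmless commutators between $\partial_r$ and $\nabla$) gives $\|(\partial_r,\Omega)^\alpha U\|_{L^2(\mathcal C)}\lesssim\sum_{|\gamma|\leq|\alpha|}\|Y^\gamma u\|_{L^2(M)}$, hence $\|U\|_{W^{k,2}(\mathcal C)}\lesssim\sum_{|\alpha|\leq k}\|Y^\alpha u\|_{L^2(M)}$.

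It then remains to prove a uniform Sobolev embedding on the non-compact cylinder, $W^{k,2}(\mathcal C)\hookrightarrow L^q(\mathcal C)$ for $k\geq n(\frac12-\frac1q)$ and $W^{k,2}(\mathcal C)\hookrightarrow L^\infty(\mathcal C)$ for $k>n/2$. Since $\mathcal C$ has bounded geometry, I would cover it by the mutually isometric, finitely overlapping unit boxes $I_m=[m,m+1]\times\mathbb{S}^{n-1}$, apply the ordinary Sobolev embedding on a single box with an $m$-independent constant, and sum: for $q\geq2$, $\|U\|_{L^q(\mathcal C)}\leq(\sum_m\|U\|_{L^q(I_m)}^q)^{1/q}\lesssim(\sum_m\|U\|_{W^{k,2}(I_m)}^q)^{1/q}\leq(\sum_m\|U\|_{W^{k,2}(I_m)}^2)^{1/2}\lesssim\|U\|_{W^{k,2}(\mathcal C)}$, using $\ell^2\hookrightarrow\ell^q$; the $L^\infty$ case follows by taking the supremum over $m$. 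Combining the two regions gives the two asserted estimates, with the thresholds $k\geq\frac n2-\frac nq$ and $k>n/2$ (i.e. $|\alpha|\leq\frac{n+2}{2}$) appearing exactly because $(r,\omega)$ is genuinely $n$-dimensional.

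The step I expect to be the crux is getting the sharp derivative count with no loss, which is also what makes a naive dyadic decomposition in $x$ unattractive: rescaling the shells $\{|x|\sim2^j\}$ to unit size attaches factors $2^{ja}$ to the $a$ radial derivatives, growing with $j$, so radial derivatives look as if they degrade the estimate. The substitution $U=r^{(n-1)/2}u$ is precisely what cancels this scaling mismatch, turning the shells into isometric boxes, so that the weight gain is seen to come entirely from handling $r$ and $\omega$ on the same footing as the $n$ directions of the flat cylinder.
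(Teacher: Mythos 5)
Your proposal is correct. Note that the paper does not actually prove this lemma: it is quoted verbatim from the literature, with the proof deferred to Wang [Lemma 2.2, MR3338309]. Your argument supplies essentially the standard proof behind that citation: splitting off the bounded piece $M\cap B_2$ (where the weight is trivial and the smooth compact boundary makes ordinary Sobolev embedding available), and on the obstacle-free exterior performing the conformal-type substitution $U=r^{(n-1)/2}u$, which converts the weighted measure $r^{n-1}dr\,d\omega$ into the flat measure on the cylinder $[1,\infty)\times\mathbb{S}^{n-1}$ and produces only bounded coefficients when $(\partial_r,\Omega)$ derivatives are commuted through. Your diagnosis of why a naive dyadic rescaling fails --- the factors $2^{ja}$ attached to radial derivatives grow in $j$, so scaling annuli to unit size loses derivatives --- is exactly the point, and the flattening substitution is the device that removes it. Two steps are left implicit but are standard and harmless: (i) the Sobolev norm on each box $[m,m+1]\times\mathbb{S}^{n-1}$ built from $(\partial_r,\Omega)$ is equivalent to the intrinsic $W^{k,2}$ norm, because the rotation fields $\Omega_{ij}$ span the tangent space of $\mathbb{S}^{n-1}$ at every point and the sphere is compact; (ii) the thresholds match, since $|\alpha|\leq\frac{n+2}{2}$ allows an integer number of derivatives strictly exceeding $n/2$ in both parities of $n$, which is what the $L^\infty$ embedding on the $n$-dimensional cylinder requires. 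With those remarks, your argument is complete and self-contained, which is more than the paper itself provides for this statement.
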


See \cite[Lemma 2.2]{MR3338309} for the proof.

We need the following trace estimates.
\begin{proposition}\label{prop-traceestimate-Rn}
	If $p\in[2,4]$ and $n=2$, we have
	\begin{equation*}
		\sup_{r>0} r^\frac{1}{2}\|v(r\cdot)\|_{L_\omega^p}\lesssim\|\partial_rv\|_{L^2(\R^2)}^\half\|\Omega^{\leq1} v\|_{L^2(\R^2)}^\half.
	\end{equation*}
\end{proposition}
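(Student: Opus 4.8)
The plan is to reduce the inequality to the endpoint $p=4$ and then prove it through a pointwise-in-$\omega$ radial trace combined with a one-dimensional Sobolev embedding on the circle, the whole computation being arranged so that the radial derivative only ever lands on $v$ and never on $\Omega v$. By density it suffices to treat $v\in C_c^\infty(\R^2)$. Since $\mathbb{S}^1$ has finite measure, Hölder's inequality gives $\|v(r\cdot)\|_{L_\omega^p}\lesssim\|v(r\cdot)\|_{L_\omega^4}$ for every $p\in[2,4]$, uniformly in $r$, so the first reduction is to establish
\[
	\sup_{r>0} r\,\|v(r\cdot)\|_{L_\omega^4}^2\lesssim\|\partial_r v\|_{L^2(\R^2)}\,\|\Omega^{\le1} v\|_{L^2(\R^2)}.
\]

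Next I would perform the radial fundamental theorem of calculus \emph{for each fixed} $\omega$. Using that $v$ vanishes at infinity and the elementary inequality $r\le s$ inside the integral, one gets
\[
	r\,|v(r,\omega)|^2=-\int_r^\infty 2\,\mathrm{Re}(\bar v\,\partial_s v)(s,\omega)\,ds\le 2\int_0^\infty s\,|v\,\partial_r v|(s,\omega)\,ds,
\]
where $\partial_s v(s,\omega)=(\partial_r v)(s\omega)$. Writing $K(\omega)=\int_0^\infty s\,|v\,\partial_r v|(s,\omega)\,ds$, squaring and integrating over $\mathbb{S}^1$ yields $r\,\|v(r\cdot)\|_{L_\omega^4}^2\le 2\|K\|_{L_\omega^2}$, so everything is reduced to the bound $\|K\|_{L_\omega^2}\lesssim\|\partial_r v\|_{L^2}\|\Omega^{\le1}v\|_{L^2}$.

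To control $K$ I would use Minkowski's integral inequality, $\|K\|_{L_\omega^2}\le\int_0^\infty s\,\|(v\,\partial_r v)(s\cdot)\|_{L_\omega^2}\,ds$, followed by $\|(v\,\partial_r v)(s\cdot)\|_{L_\omega^2}\le\|v(s\cdot)\|_{L_\omega^\infty}\|\partial_r v(s\cdot)\|_{L_\omega^2}$. Here the angular Agmon/Sobolev inequality on the circle, together with the identity $\partial_\theta\big(v(s\cdot)\big)=(\Omega v)(s\cdot)$, gives $\|v(s\cdot)\|_{L_\omega^\infty}\lesssim\|v(s\cdot)\|_{L_\omega^2}^{1/2}\|\Omega v(s\cdot)\|_{L_\omega^2}^{1/2}+\|v(s\cdot)\|_{L_\omega^2}$. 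Abbreviating $a=\|v(s\cdot)\|_{L_\omega^2}$, $b=\|\Omega v(s\cdot)\|_{L_\omega^2}$, $c=\|\partial_r v(s\cdot)\|_{L_\omega^2}$, the remaining task is
\[
	\int_0^\infty s\,(a^{1/2}b^{1/2}+a)\,c\,ds\lesssim\Big(\int_0^\infty s\,c^2\,ds\Big)^{1/2}\Big(\int_0^\infty s\,(a^2+b^2)\,ds\Big)^{1/2},
\]
which follows from two applications of Cauchy--Schwarz in $s$ with the weight $s\,ds$: the $a\,c$ term directly, and the $a^{1/2}b^{1/2}c$ term by peeling off $s^{1/2}c$ and estimating $\int_0^\infty s\,ab\,ds\le\|v\|_{L^2}\|\Omega v\|_{L^2}$. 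Recognizing $\int_0^\infty a^2 s\,ds=\|v\|_{L^2(\R^2)}^2$, and likewise for $b,c$, closes the estimate and gives the claim for $p=4$, hence for all $p\in[2,4]$.

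The main obstacle, and the reason for this particular ordering, is the ``half radial, half angular derivative'' nature of the trace: the naive route of first applying a Sobolev embedding on $\mathbb{S}^1$ and then an $L_\omega^2$ radial trace to $\Omega v$ would produce the inadmissible quantity $\partial_r\Omega v$, which does not appear on the right-hand side. Carrying out the radial integration pointwise in $\omega$ \emph{before} invoking the angular embedding is precisely the device that keeps $\partial_r$ on $v$ and $\Omega$ on $v$ separately, so that the final bound involves only $\partial_r v$, $v$, and $\Omega v$.
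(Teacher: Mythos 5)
Your proof is correct and complete. One point of comparison to flag up front: the paper does not actually prove Proposition \ref{prop-traceestimate-Rn}; it only cites \cite[Proposition 2.4]{MR3552253}, so your argument serves as a self-contained substitute rather than a parallel to an internal proof. Your route is the standard one for such mixed radial/angular trace bounds, and every step checks out: the reduction to $p=4$ by H\"older on the fixed measure space $\mathbb{S}^1$; the pointwise-in-$\omega$ radial fundamental theorem of calculus yielding $r|v(r,\omega)|^2\le 2\int_0^\infty s\,|v\,\partial_r v|(s,\omega)\,ds$; Minkowski and H\"older in $\omega$; the Agmon inequality on the circle together with the identity $\partial_\theta\bigl(v(s\cdot)\bigr)=(\Omega v)(s\cdot)$ (the key structural point --- $\Omega$ is scale-invariant, so the constant is uniform in $s$ and no powers of $s$ leak in); and the two Cauchy--Schwarz applications in $s\,ds$, which recombine exactly into $\|v\|_{L^2(\R^2)}$, $\|\Omega v\|_{L^2(\R^2)}$, $\|\partial_r v\|_{L^2(\R^2)}$ via the polar measure $s\,ds\,d\omega$. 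Your closing remark is also apt: applying the angular embedding before the radial integration would force the inadmissible quantity $\partial_r\Omega v$ into the right-hand side, and your ordering is precisely what avoids it. One typo to fix: in your second display the asserted equality $r|v(r,\omega)|^2=-\int_r^\infty 2\,\mathrm{Re}(\bar v\,\partial_s v)(s,\omega)\,ds$ is off by the factor $r$ (the right-hand side equals $|v(r,\omega)|^2$); this is harmless, since the inequality you actually use follows by multiplying through by $r$ and invoking $r\le s$ inside the integral, exactly as your prose indicates, but the display should be corrected.
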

See \cite[Proposition 2.4]{MR3552253} for the proof.

\begin{proposition}\label{prop-traceestimate-M}
	Let $n\geq2$, then
	\begin{equation*}
		\|r^{\frac{n-1}{2}}u(r\omega)\|_{L_\omega^2}\lesssim \|u\|_{L^2(|x|\geq r)}+\|\nabla u\|_{L^2(|x|\geq r)}.
	\end{equation*}
\end{proposition}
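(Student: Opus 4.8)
The plan is to reduce the spherical trace to a radial integral via the fundamental theorem of calculus and then discard a favorable lower-order term. First I would observe that, since $r^{n-1}\,d\omega$ is the surface measure on $\{|x|=r\}$, the square of the left-hand side is exactly the spherical average
\begin{equation*}
	\|r^{\frac{n-1}{2}}u(r\omega)\|_{L_\omega^2}^2=\int_{\mathbb{S}^{n-1}}r^{n-1}|u(r\omega)|^2\,d\omega.
\end{equation*}
Working first with $u\in C_c^\infty$ and extending by density at the end (both sides are controlled by $\|u\|_{H^1(|x|\ge r)}$), I would apply, for each fixed $\omega$ and each $s\ge r$,
\begin{equation*}
	r^{n-1}|u(r\omega)|^2 = s^{n-1}|u(s\omega)|^2-\int_r^s\partial_\tau\big(\tau^{n-1}|u(\tau\omega)|^2\big)\,d\tau,
\end{equation*}
and integrate this identity over $\mathbb{S}^{n-1}$.

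The main technical point is to discard the boundary contribution at infinity. Setting $g(s)=\int_{\mathbb{S}^{n-1}}s^{n-1}|u(s\omega)|^2\,d\omega$, one has $\int_r^\infty g(s)\,ds=\|u\|_{L^2(|x|\ge r)}^2<\infty$, so $g(s_k)\to0$ along some sequence $s_k\to\infty$. Passing to the limit along $s_k$ kills the first term on the right, leaving
\begin{equation*}
	\int_{\mathbb{S}^{n-1}}r^{n-1}|u(r\omega)|^2\,d\omega = -\int_{\mathbb{S}^{n-1}}\int_r^\infty\partial_\tau\big(\tau^{n-1}|u|^2\big)\,d\tau\,d\omega.
\end{equation*}

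Expanding $\partial_\tau(\tau^{n-1}|u|^2)=(n-1)\tau^{n-2}|u|^2+2\tau^{n-1}\,\mathrm{Re}(u\,\overline{\partial_\tau u})$ and rewriting in Cartesian coordinates (using $\tau^{n-1}\,d\tau\,d\omega=dx$ and $\partial_\tau u=\partial_r u$), the identity becomes
\begin{equation*}
	\int_{\mathbb{S}^{n-1}}r^{n-1}|u(r\omega)|^2\,d\omega = -(n-1)\int_{|x|\ge r}\frac{|u|^2}{|x|}\,dx-2\int_{|x|\ge r}\mathrm{Re}(u\,\overline{\partial_r u})\,dx.
\end{equation*}
The first term on the right is nonpositive and may simply be dropped (it is in fact a favorable Hardy-type gain). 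For the second term, Cauchy--Schwarz together with $|\partial_r u|\le|\nabla u|$ gives
\begin{equation*}
	\int_{\mathbb{S}^{n-1}}r^{n-1}|u(r\omega)|^2\,d\omega\le 2\|u\|_{L^2(|x|\ge r)}\|\nabla u\|_{L^2(|x|\ge r)}\le \big(\|u\|_{L^2(|x|\ge r)}+\|\nabla u\|_{L^2(|x|\ge r)}\big)^2,
\end{equation*}
and taking square roots yields the claim. I expect the only delicate step to be the justification of the vanishing boundary term; the sequential argument above (rather than pointwise decay) sidesteps the need for any quantitative rate and keeps the density reduction to smooth functions routine.
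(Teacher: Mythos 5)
Your proof is correct, and it is essentially the same argument as the one behind the paper's citation: the paper does not prove this proposition itself but refers to Lemma 2.2 of Hidano--Wang--Yokoyama \cite{MR2980460}, whose proof is precisely this fundamental-theorem-of-calculus identity in the radial variable, discarding the nonpositive Hardy-type term and applying Cauchy--Schwarz with $|\partial_r u|\le|\nabla u|$. Your sequential treatment of the boundary term at infinity (rather than relying on compact support) is a harmless variant that, if anything, makes the density step even more routine.
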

See \cite[Lemma 2.2]{MR2980460} for the proof.

\begin{proposition}[\rm{Local energy estimates}]\label{prop-localenergy}
	Consider the Dirichlet-wave equations \eqref{eq-Diriwave-M}.
	Let $n\geq3$. For $(f,g,G)\in \dot{H}_D^1\times L_x^2\times(LE^*+L_t^1L_x^2)$, we have $u\in C_b([0,\infty);\dot{H}_D^1(M)$, $\partial_t u\in C_b([0,\infty);L_x^2(M))$, and
	\begin{equation*}
		\|u\|_{LE\cap E}\lesssim\|f\|_{\dot{H}_D^1(M)}+\|g\|_{L_x^2}+\|G\|_{LE^*+L_t^1L_x^2}.
	\end{equation*}
	In addition, we have higher order ($k$th-order) local energy estimates. 
	Provided that $(f,g)\in H_D^{k+1}\times H_D^k$, that $G\in C_t^{k-1}((H^{-1}(M))_{w*})$ with $Z^{\leq k}G\in LE^*+L_t^1L_x^2$ and $\partial^{\leq k-1}G\in C_b(L_{x,loc}^2(M))$, and that $(f,g,G)$ fulfill the compatibility condition of order $k+1$, there exists some $R>4$ such that we have a solution $u\in C([0,\infty);H_D^{k+1}(M))$ and
	\begin{gather*}
		\partial_t^i u\in C_b([0,\infty);H_D^{k+1-i}(M)), \ 1\leq i \leq k+1, \\
		\partial_t^jY^\alpha u\in C_b([0,\infty);H^{k+1-j-|\alpha|}(M)),  \ 1\leq|\alpha|\leq\min \{k+1-j,k\} ,
	\end{gather*}
	with the estimates
	\begin{equation}\label{eq-prop-localenergy-highorder}
		\begin{aligned}
			\|u\|_{LE_{k,Z}\cap E_{k,Z}}&\lesssim\sum_{|\alpha|\leq k}\|(\nabla,\Omega)^\alpha(\nabla f, g)\|_{L_x^2}+\|Z^\alpha G\|_{LE^*+L_t^1L_x^2}\\
			&\quad+\sum_{|\gamma|\leq k-1}\|Z^\gamma G(0,x)\|_{L_x^2}+\|\partial^\gamma G\|_{(L_t^\infty\cap L_t^2)L_x^2(B_{2R})}.
		\end{aligned}
	\end{equation}
\end{proposition}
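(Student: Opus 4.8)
The plan is to establish the zeroth-order estimate first and then bootstrap to the $k$th-order bound by commuting \eqref{eq-Diriwave-M} with the admissible vector fields $Z=(\partial,\Omega)$. The zeroth-order statement is the fundamental local energy decay estimate for the wave equation on a nontrapping exterior domain, and it splits into an energy part and a local energy part. For $\|u\|_E$ I would use the standard multiplier $\partial_t u$: pairing \eqref{eq-Diriwave-M} with $\partial_t u$ and integrating by parts kills the boundary contribution, since $u|_{\partial M}=0$ forces $\partial_t u|_{\partial M}=0$, giving $\|u\|_E\lesssim\|f\|_{\dot H_D^1}+\|g\|_{L_x^2}+\|G\|_{L_t^1 L_x^2}$. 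The local energy part $\|u\|_{LE}$ comes from a Morawetz/KSS multiplier of the form $a(r)\partial_r u+b(r)u$ with suitable radial weights; on $M=\R^n$ this is the classical computation, and on the exterior domain the nontrapping hypothesis is exactly what lets one absorb the boundary terms and close the estimate, invoking the known integrated local energy decay for nontrapping obstacles.

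With both pieces in hand for $k=0$, I would run a $T^*T$/duality argument against the $LE$ bound to upgrade the forcing to the dual norm $\|G\|_{LE^*+L_t^1 L_x^2}$, and recover the continuity-in-time statements $u\in C_b([0,\infty);\dot H_D^1(M))$ and $\partial_t u\in C_b([0,\infty);L_x^2(M))$ by approximating the data by smooth compactly supported functions and passing to the limit using the energy bound.

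For the higher-order estimate I would induct on $k$. The essential point is that only $Z=(\partial,\Omega)$ is admissible: $\partial_t$ commutes with $\Box$ and preserves the Dirichlet condition, so applying $\partial_t^j$ and invoking the base estimate controls the pure time-derivative norms directly, the compatibility condition of order $k+1$ guaranteeing that each $\partial_t^j u(0,\cdot)$ lies in the correct space and satisfies the boundary reductions needed at $t=0$. The difficulty is that $\nabla$ and the rotations $\Omega$ neither preserve $\partial M$ for a general obstacle nor commute with the equation near it. To handle this I would use a cutoff decomposition $u=\beta u+(1-\beta)u$, with $\beta\in C_c^\infty$ equal to $1$ near $\mathcal{K}$ and supported in some $B_R$. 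Far from the obstacle, $(1-\beta)u$ solves a wave equation on all of $\R^n$ whose source is $(1-\beta)G$ plus commutator terms supported in $\supp\nabla\beta$; there $\Omega$ and $\nabla$ commute with $\Box$ freely, so after recording that $[\Box,Z]$ produces only lower-order terms the induction hypothesis applies. Near the obstacle, inside the fixed compact set $B_R$, I would trade spatial regularity for time regularity by elliptic regularity: from $-\Delta u=G-\partial_t^2 u$ with $u|_{\partial M}=0$, elliptic estimates recover $\nabla^{\leq k+1}(\beta u)$ from the already-controlled time derivatives and $G$, which is precisely why the localized $(L_t^\infty\cap L_t^2)L_x^2(B_{2R})$ norms of $\partial^\gamma G$ appear in \eqref{eq-prop-localenergy-highorder}. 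Iterating this near/far splitting against the base local energy estimate closes the induction.

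The main obstacle is exactly this commutation near the boundary. Because neither $\nabla$ nor $\Omega$ respects a general nontrapping $\mathcal{K}$, one cannot simply differentiate the equation, and the entire higher-order scheme rests on the interchange between elliptic regularity in space and local energy in time near $\partial M$, glued to free commutation in the exterior region. Carefully tracking the commutator terms supported in the transition annulus $\supp\nabla\beta$, and verifying that the compatibility conditions make all the $t=0$ reductions rigorous, is where the bulk of the technical work lies.
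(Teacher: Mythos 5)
For the estimates themselves your outline follows the same route as the source the paper relies on: the paper does not reprove \eqref{eq-prop-localenergy-highorder} at all, but quotes it from \cite[Proposition 1.4]{MR3338309}, and the ingredients you list --- energy multiplier, KSS-type multiplier with integrated local energy decay for nontrapping obstacles taken as a black box, commuting $\partial_t$ (which preserves the Dirichlet condition), and a near/far cutoff in which elliptic regularity near $\mathcal{K}$ produces exactly the localized $(L_t^\infty\cap L_t^2)L_x^2(B_{2R})$ norms of $\partial^{\gamma}G$ --- are the skeleton of that proof. One caveat on phrasing: for a general nontrapping obstacle the boundary terms generated by a multiplier $a(r)\partial_r u+b(r)u$ have no favorable sign (that only happens for star-shaped $\mathcal{K}$); nontrapping enters through scattering-theoretic local energy decay, which is then glued to the free-space estimate by cutoffs, not by ``absorbing boundary terms'' in the multiplier identity. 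Since you invoke the known decay result as a black box anyway, this is a matter of wording rather than a flaw.

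The genuine gap is in the regularity statements, which are the only part of this proposition the paper proves itself (Appendix \ref{section-regularity}). You propose to obtain the continuity-in-time claims ``by approximating the data by smooth compactly supported functions,'' and your induction ``applies $\partial_t^j$ and invokes the base estimate,'' with the compatibility conditions ``guaranteeing'' the $t=0$ reductions. But the paper explicitly notes that the density argument is standard only when $\mathcal{K}=\emptyset$: with a nonempty obstacle, the smooth approximations must themselves satisfy the compatibility conditions, and generic mollification destroys them, so the approximate solutions do not converge in the higher norms. More fundamentally, your induction step presumes that $\partial_t^j u$ \emph{is} a solution, in the sense required by the base estimate, of the commuted Dirichlet problem; at the stated regularity of $(f,g,G)$ this is exactly what has to be proved, not assumed. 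The paper's Proposition \ref{prop-regularity} supplies it by a duality argument: one first shows that $\partial_t u$ satisfies the weak formulation of the shifted problem with data $(g,\Delta f+G(0,\cdot))$ and forcing $\partial_t G$ (via careful cutoffs in time and near $\partial M$), then constructs the genuine solution $v$ of that problem from the first-order statement, and finally identifies $\partial_t u=v$ by testing against functions $w_h$ with $\Box w_h=h$, built by solving a \emph{backward} Dirichlet-wave problem on $B_{\bar R}\setminus\mathcal{K}$ and extended by finite speed of propagation; the spatial regularity of $Yu$ then follows from the elliptic/cutoff step that you do have. Without this identification (or an actually constructed compatibility-preserving approximation scheme), the higher-order continuity claims in the proposition remain unjustified.
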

	See \cite[Proposition 1.4]{MR3338309} for the proof of estimate and Appendix \ref{section-regularity} for the explanation of regularity.
	
\begin{lemma}\label{lem-seriessum}
	Let $b\in\R$ and $s>1$. For any non-decreasing function $h$ defined on $[0,1]$ with $h(0)\geq0$, we have the following inequality
		\begin{equation*}
		\sum_{j=0}^{2^j<s} h(2^{-j\frac{n-1}{2}})2^{-j(b+1)}\leq h(1)+\frac{2(b+1)}{(n-1)(2^{b+1}-1)}\int_{s^{-\frac{n-1}{2}}}^{1} \frac{h(\tau)}{\tau}\tau^{(b+1)\frac{2}{n-1}}d\tau.
	\end{equation*}
\end{lemma}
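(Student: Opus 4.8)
The plan is to turn the dyadic sum into a standard integral comparison for a monotone function via the substitution $\tau=2^{-t(n-1)/2}$. Set $a=b+1$. Then $d\tau/\tau=-\tfrac{(n-1)\ln 2}{2}\,dt$ and $\tau^{(b+1)\frac{2}{n-1}}=2^{-ta}$, while the limits $\tau=1$ and $\tau=s^{-\frac{n-1}{2}}$ correspond to $t=0$ and $t=\log_2 s$. Writing $H(t)=h(2^{-t(n-1)/2})$, this gives
\begin{equation*}
\frac{2(b+1)}{(n-1)(2^{b+1}-1)}\int_{s^{-\frac{n-1}{2}}}^{1}\frac{h(\tau)}{\tau}\tau^{(b+1)\frac{2}{n-1}}\,d\tau=\frac{a\ln 2}{2^{a}-1}\int_{0}^{\log_2 s}H(t)\,2^{-ta}\,dt ,
\end{equation*}
so the awkward prefactor collapses to the clean constant $a\ln 2/(2^{a}-1)$. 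It therefore suffices to show $\sum_{2^{j}<s}H(j)2^{-ja}\le h(1)+\frac{a\ln 2}{2^{a}-1}\int_{0}^{\log_2 s}H(t)2^{-ta}\,dt$.

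Next I would exploit monotonicity. Since $h$ is non-decreasing with $h(0)\ge 0$, we have $h\ge 0$, and since $t\mapsto 2^{-t(n-1)/2}$ is non-increasing (here $n\ge 2$), the function $H$ is non-increasing on $[0,\infty)$. Hence for each integer $j\ge1$ and $t\in[j-1,j]$ we have $H(t)\ge H(j)$, so
\begin{equation*}
\int_{j-1}^{j}H(t)2^{-ta}\,dt\ \ge\ H(j)\int_{j-1}^{j}2^{-ta}\,dt\ =\ H(j)2^{-ja}\,\frac{2^{a}-1}{a\ln 2}.
\end{equation*}
Rearranging yields $H(j)2^{-ja}\le\frac{a\ln 2}{2^{a}-1}\int_{j-1}^{j}H(t)2^{-ta}\,dt$. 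Letting $J$ be the largest integer with $2^{J}<s$, I would sum this over $j=1,\dots,J$, add the $j=0$ term $H(0)=h(1)$, combine the adjacent integrals into $\int_{0}^{J}$, and finally enlarge $\int_0^J$ to $\int_0^{\log_2 s}$ using $J<\log_2 s$ together with $H\ge 0$. Combined with the first step this is exactly the claimed inequality.

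The only point that needs care is the sign bookkeeping, since $b$ is an arbitrary real: both $\frac{2^{a}-1}{a\ln 2}$ and its reciprocal are strictly positive for every $a\neq 0$ (for $a>0$ numerator and denominator are positive, for $a<0$ both are negative), so the per-interval estimate and its rearrangement preserve their direction regardless of whether $b>-1$ or $b<-1$. The borderline $b=-1$, i.e. $a=0$, is covered by continuity, since $\frac{a\ln 2}{2^{a}-1}\to 1$ as $a\to 0$ and the inequality degenerates to the plain integral test $\sum_{j\ge1}H(j)\le\int_{0}^{\log_2 s}H(t)\,dt$ for the non-increasing function $H$. Beyond this the argument is a routine comparison of a sum with an integral, so I do not anticipate any genuine obstacle.
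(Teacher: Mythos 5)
Your proof is correct and is essentially the paper's own argument: the paper compares each term $h(2^{-j\frac{n-1}{2}})2^{-j(b+1)}$ with $\frac{b+1}{2^{b+1}-1}\int_{2^{-j}}^{2^{-j+1}}h(\tau^{\frac{n-1}{2}})\tau^b\,d\tau$ via monotonicity of $h$ and changes variables at the end, whereas you perform the identical interval-by-interval comparison after moving the change of variables $\tau=2^{-t(n-1)/2}$ to the start. Your explicit sign bookkeeping for $b<-1$ and the limiting case $b=-1$ is slightly more careful than the paper, but the underlying mechanism is the same.
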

\begin{proof}
	Note that
	\begin{equation*}
		\int_{2^{-j}}^{2^{-j+1}}\tau^b d\tau=\frac{2^{b+1}-1}{b+1}2^{-j(b+1)}.
	\end{equation*}
	Thus, 
	\begin{align*}
		\sum_{j=1}^{2^j<s} h(2^{-j\frac{n-1}{2}})2^{-j(b+1)}
		&\leq \sum_{j=1}^{2^j<s} \frac{b+1}{2^{b+1}-1} h(2^{-j\frac{n-1}{2}})\int_{2^{-j}}^{2^{-j+1}}\tau^b d\tau \\
		&\leq  \frac{b+1}{2^{b+1}-1} \int_{s^{-1}}^{1}h(\tau^\frac{n-1}{2})\tau^b d\tau \\
		&=\frac{2(b+1)}{(n-1)(2^{b+1}-1)}\int_{s^{-\frac{n-1}{2}}}^{1} \frac{h(\tau)}{\tau}\tau^{(b+1)\frac{2}{n-1}}d\tau.\qedhere
	\end{align*}
\end{proof}

\subsection{Solution spaces}
The solution will be obtain by iteration in some suitable function space.

Let $(X,\tilde{X})$ be $(\partial,\nabla)$ or $(Z,Y)$. For initial data $(f,g)\in H_D^{\kappa+1}\times H_D^{\kappa}$ satisfying the compatibility of order $\kappa+1$, denote $S_{\kappa,X,T}(T\in(0,\infty))$ the space consisting of functions
\begin{equation*}
	\begin{gathered}
		\partial_t^i u\in C([0,T];H_D^{\kappa+1-i}(M)), \ 0\leq i \leq \kappa+1,\\
		\partial_t^i \tilde{X}^\alpha u\in C([0,T];H^{\kappa+1-i-|\alpha|}(M)), \ 1\leq|\alpha|\leq \min\{\kappa+1-i,\kappa\},\\
		\|X^{\leq\kappa} u\|_{LE}<\infty, \\
		\partial_t^iu(0,x)=\Psi_i(\nabla^{\leq i} f,\nabla^{\leq i-1}g)(x) , \  0\leq i\leq\kappa+1,
	\end{gathered}
\end{equation*}
and equipped with the norm $\|\cdot\|_{LE_{\kappa,X}\cap E_{\kappa,X}}$. Also, we define $S_{\kappa,X,\infty}$ as the space consisting of functions $u\in C([0,\infty);H_D^{\kappa+1}(M))$ and
\begin{gather*}
	\partial_t^i u\in C_b([0,\infty);H_D^{\kappa+1-i}(M)), \ 1\leq i \leq \kappa+1, \\
	\partial_t^i\tilde{X}^\alpha u\in C_b([0,\infty);H^{\kappa+1-i-|\alpha|}(M)),  \ 1\leq|\alpha|\leq \min\{\kappa+1-i,\kappa\} ,\\
	\|Z^{\leq\kappa} u\|_{LE}<\infty, \\
	\partial_t^iu(0,x)=\Psi_i(\nabla^{\leq i} f,\nabla^{\leq i-1}g)(x) , \  0\leq i\leq\kappa+1,
\end{gather*}
and equipped with the norm $\|\cdot\|_{LE_{\kappa,X}\cap E_{\kappa,X}}$. 
Denote 
\begin{equation*}
	S_{\kappa,X,T}(\delta)=\{u\in S_{\kappa,X,T}:\|u\|_{LE_{\kappa,X}\cap E_{\kappa,X}}\leq \delta\} ,  \ T\in(0,\infty], \ \delta>0.
\end{equation*}

We choose $X$ to be $Z$ in the proofs of Theorem \ref{thm-3dim} and Remark \ref{coro-Euclidgeq3} and to be $\partial$ in the proofs of Theorem \ref{thm-radial} and Remark \ref{coro-radial-geq4}.

For Theorem \ref{coro-Euclid2}, we will use $S_{\kappa,\Gamma,T}(T\in(0,\infty))$ the space consisting of function
\begin{equation*}
	\begin{gathered}
		u\in C([0,T];H^{\kappa+1}(\R^2)), \\
		\partial\Gamma^\alpha u\in C([0,T];H^{\kappa-|\alpha|}(\R^2)), \ 0\leq |\alpha| \leq \kappa,\\
		\partial_t^iu(0,x)=\Psi_i(\nabla^{\leq i} f,\nabla^{\leq i-1}g)(x) , \  0\leq i\leq\kappa+1,
	\end{gathered}
\end{equation*}
and equipped with the norm $\|\cdot\|_{E_{\kappa,\Gamma}}$. Also, we define $S_{\kappa,\Gamma,\infty}$ as the space consisting of functions $u\in C([0,\infty);H^{\kappa+1}(\R^2))$ and
\begin{equation*}
	\begin{gathered}
		\partial\Gamma^\alpha u\in C_b([0,\infty);H^{\kappa-|\alpha|}(\R^2)), \ 0\leq |\alpha| \leq \kappa,\\
		\partial_t^iu(0,x)=\Psi_i(\nabla^{\leq i} f,\nabla^{\leq i-1}g)(x) , \  0\leq i\leq\kappa+1,
	\end{gathered}
\end{equation*}
and equipped with the norm $\|\cdot\|_{E_{\kappa,\Gamma}}$. Again, denote 
\begin{equation*}
	S_{\kappa,\Gamma,T}(\delta)=\{u\in S_{\kappa,\Gamma,T}:\|u\|_{E_{\kappa,\Gamma}}\leq \delta\} ,  \ T\in(0,\infty], \ \delta>0.
\end{equation*}
	
\section{Proof of Theorem \ref{thm-3dim}}
In this section, $E_i$, $LE_i$, and $S_T$ stand for $E_{i,Z}$, $LE_{i,Z}$, and $S_{2,Z,T}$.
Theorem \ref{thm-3dim} will be established by iteration argument in some suitable function space $S_T$.
\begin{proposition}\label{prop-proof-3dim-iteration}
	Let $T\in(0,\infty]$ and $(f,g,F)$ satisfy the conditions in Theorem \ref{thm-3dim}. When $T=\infty$, further require that $\int_{0}^{1}\frac{\rho(\tau)}{\tau}d\tau<\infty$. Temporarily assume that $S_T(1/(2C_S))$ are non-empty. For all $\tilde{u}\in S_T(1/(2C_S))$, let $I[\tilde{u}]$ be the solution to the problem
	\begin{equation}\label{eq-proof-thm-3dim-prop-iteration}
		\left\{\begin{aligned}
			&\Box u(t,x)=F(\partial \tilde{u})&&, \ (t,x)\in (0,T)\times M, \ \\
			&u(t,x)=0&&, \ x\in \partial M,  \ t>0 , \\
			&u(0,x)=f(x), \ u_t(0,x)=g(x) &&, \ x\in M,
		\end{aligned}
		\right.
	\end{equation}
and fulfill the estimates \eqref{eq-prop-localenergy-highorder}. Then, there exists constants $C_1$ and $C_2$, such that, $\tilde{u},\tilde{v}\in S_T(1/(2C_S))$, implies
	\begin{gather}
			\|I[\tilde{u}]\|_{LE_2\cap E_2}\leq C_1\ep+C_1\Lambda(T,\|\tilde{u}\|_{E_2})\|\tilde{u}\|_{E_2}\|\tilde{u}\|_{LE_2\cap E_2} , \label{eq-prop-3dim-2regular-fbound} \\
			\|I[\tilde{u}]-I[\tilde{v}]\|_{LE_{1}\cap E_1}\leq C_2\Lambda(T,P(\tilde{u},\tilde{v}))P(\tilde{u},\tilde{v})\|\tilde{u}-\tilde{v}\|_{LE_1\cap E_1},\label{eq-prop-3dim-2regular-fdiffer}
	\end{gather}
	where
	\begin{equation*}
		\Lambda(T,\zeta)=\rho(C_S\zeta)+\int_{{\<T\>}^{-1}}^{1}\frac{\rho(C_S \zeta\tau)}{\tau}d\tau ,
	\end{equation*}
	and $P(\tilde{u},\tilde{v})=\max
	\{\|\tilde{u}\|_{LE_2\cap E_2},\|\tilde{v}\|_{LE_2\cap E_2}\}$.
\end{proposition}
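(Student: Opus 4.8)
The plan is to apply the second-order local energy estimate of Proposition~\ref{prop-localenergy} (with $k=2$ and $G=F(\partial\tilde u)$) to the linear problem \eqref{eq-proof-thm-3dim-prop-iteration} defining $I[\tilde u]$, and thereby reduce both \eqref{eq-prop-3dim-2regular-fbound} and \eqref{eq-prop-3dim-2regular-fdiffer} to nonlinear estimates on the source term. Concretely, \eqref{eq-prop-localenergy-highorder} gives
\[
\|I[\tilde u]\|_{LE_2\cap E_2}\lesssim \ep + \|Z^{\leq2}F(\partial\tilde u)\|_{LE^*+L_t^1L_x^2} + \sum_{|\gamma|\leq1}\|Z^\gamma F(\partial\tilde u)(0,\cdot)\|_{L_x^2}+\|\partial^{\leq1}F(\partial\tilde u)\|_{(L_t^\infty\cap L_t^2)L_x^2(B_{2R})},
\]
where the data contribution equals $\ep$ by hypothesis, and the last two terms are lower-order pieces living at $t=0$ or on the fixed ball $B_{2R}$ (where the weight $\langle r\rangle$ is harmless); these contribute the $\rho(C_S\|\tilde u\|_{E_2})$ part of $\Lambda$. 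Hence the crux is to bound $\|Z^{\leq2}F(\partial\tilde u)\|_{LE^*+L_t^1L_x^2}$.

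First I would record the pointwise consequences of $\partial_\mathbf{q}^{\leq\kappa}F(0)=0$ with $\kappa=2$, namely $|\partial_\mathbf{q}^jF(\mathbf q)|\lesssim|\mathbf q|^{2-j}\rho(|\mathbf q|)$ for $j=0,1,2$, together with the chain-rule expansion $Z^2F(\partial\tilde u)\sim \partial_\mathbf{q}F(\partial\tilde u)\,Z^2\partial\tilde u + \partial_\mathbf{q}^2F(\partial\tilde u)\,(Z\partial\tilde u)^2$ and its order-$\leq1$ analogues. The key decay input is Lemma~\ref{lem-Sobolev-rotaition}: since $(n+2)/2=5/2$ for $n=3$, it yields $\|\langle r\rangle\,\partial\tilde u\|_{L_t^\infty L_x^\infty}\leq C_S\|\tilde u\|_{E_2}$, hence $|\partial\tilde u(t,x)|\leq C_S\langle r\rangle^{-1}\|\tilde u\|_{E_2}$; in particular $\tilde u\in S_T(1/(2C_S))$ keeps $|\partial\tilde u|\leq1/2$ inside the unit ball where $F$ is defined. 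By monotonicity of $\rho$, on the dyadic annulus $A_j=\{|x|\sim2^j\}$ one replaces $\rho(|\partial\tilde u|)$ by $\rho(C_S\|\tilde u\|_{E_2}2^{-j})$. The top-order term $\partial_\mathbf{q}F(\partial\tilde u)\,Z^2\partial\tilde u$ is then estimated in $LE^*=l_1^{1/2}L_t^2L_x^2$ by placing $|\partial_\mathbf{q}F|\lesssim C_S\|\tilde u\|_{E_2}2^{-j}\rho(C_S\|\tilde u\|_{E_2}2^{-j})$ in $L_t^\infty L_x^\infty(A_j)$ and using $\|\Phi_j Z^2\partial\tilde u\|_{L_t^2L_x^2}\leq 2^{j/2}\|\tilde u\|_{LE_2}$ from the definition of $LE$; the powers $2^{-j}\cdot2^{j/2}\cdot2^{j/2}$ cancel, leaving $\sum_{j}\rho(C_S\|\tilde u\|_{E_2}2^{-j})$. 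Summing over the annuli by Lemma~\ref{lem-seriessum} (the $n=3$, $b=-1$ case, where $\int_{2^{-j}}^{2^{-j+1}}\tau^{-1}d\tau=\log2$) converts this series into $\rho(C_S\|\tilde u\|_{E_2})+\int_{\langle T\rangle^{-1}}^1\rho(C_S\|\tilde u\|_{E_2}\tau)\tau^{-1}d\tau=\Lambda(T,\|\tilde u\|_{E_2})$, yielding exactly the factor in \eqref{eq-prop-3dim-2regular-fbound} with $\zeta=\|\tilde u\|_{E_2}$ and the remaining factor $\|\tilde u\|_{LE_2}\leq\|\tilde u\|_{LE_2\cap E_2}$.

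For the difference estimate \eqref{eq-prop-3dim-2regular-fdiffer}, $I[\tilde u]-I[\tilde v]$ solves \eqref{eq-Diriwave-M} with zero data and source $F(\partial\tilde u)-F(\partial\tilde v)=\big(\int_0^1\partial_\mathbf{q}F(\partial\tilde v+\theta\,\partial(\tilde u-\tilde v))\,d\theta\big)\cdot\partial(\tilde u-\tilde v)$. Applying the first-order ($k=1$) case of Proposition~\ref{prop-localenergy} and repeating the above bookkeeping one derivative lower gives \eqref{eq-prop-3dim-2regular-fdiffer}: the coefficient and its single $Z$-derivative require order-$2$ control of $\tilde u,\tilde v$, which is exactly $P(\tilde u,\tilde v)$, while the remaining factor is the order-$1$ difference $\|\tilde u-\tilde v\|_{LE_1\cap E_1}$; the weight $\Lambda(T,P(\tilde u,\tilde v))$ arises from the same dyadic summation. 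The loss of one derivative on the difference is the usual price for measuring the contraction, and is the reason \eqref{eq-prop-3dim-2regular-fdiffer} is stated in the order-$1$ norm.

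The main obstacle is the quadratic-in-top-derivative term $\partial_\mathbf{q}^2F(\partial\tilde u)\,(Z\partial\tilde u)^2$ in $Z^2F(\partial\tilde u)$. Extracting the $\rho$-weight with its $\langle r\rangle^{-1}$ decay is harmless, but one cannot place a factor $Z\partial\tilde u$ in $L_x^\infty$: the weighted $L^\infty$ Sobolev bound for $Z\partial\tilde u$ needs $Y^{\leq2}Z\partial\tilde u$, i.e. three $Z$-derivatives, which $E_2$ does not control. The remedy I would use is to avoid the $L^\infty$ split and instead distribute both factors in a weighted $L^4$ norm: Lemma~\ref{lem-Sobolev-rotaition} with $q=4$ requires only $Y^{\leq1}(Z\partial\tilde u)$ (two $Z$-derivatives) and gives $\|\langle r\rangle^{1/2}Z\partial\tilde u\|_{L_x^4}\lesssim\|\tilde u\|_{E_2}$, which combined with the $L_t^2$ local-energy integrability of one factor (equivalently, the trace estimate of Proposition~\ref{prop-traceestimate-M} on spheres) keeps each factor controlled at order $\leq2$ while retaining the time-integrability demanded by $LE^*$. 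Verifying that the two weights $\langle r\rangle^{1/2}$, the dyadic factors $2^{j/2}$, and the $L_t^2$ versus $L_t^\infty$ time-norms combine to regenerate the same series $\sum_j\rho(C_S\|\tilde u\|_{E_2}2^{-j})$, and that finite speed of propagation localizes the source to $|x|\lesssim\langle t\rangle+R\lesssim\langle T\rangle$ so as to cut the sum at $2^j<\langle T\rangle$ and reproduce the sharp lower limit $\langle T\rangle^{-1}$ in $\Lambda$, is the delicate bookkeeping at the heart of the argument.
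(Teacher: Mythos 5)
Your strategy mirrors the paper's proof almost step for step: apply the second-order local energy estimate \eqref{eq-prop-localenergy-highorder} to reduce everything to $\|Z^{\leq2}F(\partial\tilde u)\|_{LE^*+L_t^1L_x^2}$, use the chain rule together with the pointwise decay $|\partial\tilde u|\leq C_S\langle r\rangle^{-1}\|\tilde u\|_{E_2}$ from Lemma \ref{lem-Sobolev-rotaition}, estimate the top-order term $\partial_{\mathbf q}F\cdot Z^{\leq2}\partial\tilde u$ by an $L^\infty\times(\text{local energy})$ pairing on dyadic annuli, resolve the problematic quadratic term $\partial_{\mathbf q}^2F\cdot(Z\partial\tilde u)^2$ by the weighted $L^4\times L^4$ splitting with one factor controlled uniformly in time by $E_2$ and the annulus-localized factor by $LE_2$, sum via Lemma \ref{lem-seriessum}, and run the same scheme one order lower for the contraction bound \eqref{eq-prop-3dim-2regular-fdiffer}. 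All of this is exactly what the paper does, including your diagnosis of why $Z\partial\tilde u$ cannot be placed in $L^\infty$.

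The genuine gap is the mechanism you invoke to cut the dyadic sum at $2^{j}<\langle T\rangle$. You claim finite speed of propagation localizes the source to $|x|\lesssim\langle t\rangle+R\lesssim\langle T\rangle$, but Theorem \ref{thm-3dim} does not assume compactly supported data --- only finiteness of weighted $L^2$ norms --- so $F(\partial\tilde u)$ need not vanish for large $|x|$ and no such support property exists. Without a valid truncation, your series $\sum_{j\geq0}\rho(C_S\|\tilde u\|_{E_2}2^{-j})$ runs over all annuli and is comparable to $\rho(C_S\|\tilde u\|_{E_2})+\int_0^1\rho(C_S\|\tilde u\|_{E_2}\tau)\tau^{-1}d\tau$, which diverges exactly in the case $\int_0^1\rho(\tau)\tau^{-1}d\tau=\infty$ --- the case needed for the lifespan lower bound --- so the finite factor $\Lambda(T,\cdot)$ in \eqref{eq-prop-3dim-2regular-fbound} would not be obtained. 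The paper's actual mechanism is the flexibility of the norm $LE^*+L_t^1L_x^2$, with no support considerations at all: split $Z^{\leq2}F(\partial\tilde u)=\chi_T(\cdots)+\bar\chi_T(\cdots)$, where $\chi_T$ is the indicator of the ball $B_{\langle T\rangle}$. The near part is measured in $LE^*$, where the spatial cutoff automatically restricts the dyadic sum to $2^{j-1}<\langle T\rangle$; the far part is measured in $L_t^1L_x^2$ via H\"older in time,
\begin{equation*}
\|\bar\chi_T Z^{\leq2}[F(\partial\tilde u)]\|_{L_t^1L_x^2}
\lesssim T\,\rho\!\left(C_S\|\tilde u\|_{E_2}\langle T\rangle^{-1}\right)\langle T\rangle^{-1}\|\tilde u\|_{E_2}^2
\lesssim \rho(C_S\|\tilde u\|_{E_2})\,\|\tilde u\|_{E_2}^2,
\end{equation*}
where the decay $\langle r\rangle^{-1}\leq\langle T\rangle^{-1}$ available on the support of $\bar\chi_T$, together with $T\langle T\rangle^{-1}\leq1$ and the monotonicity of $\rho$, absorbs the factor $T$. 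Replacing your finite-speed argument by this spatial splitting (and doing the same in the difference estimate) closes the gap, and the rest of your argument goes through as written.
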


\begin{proof}[Proof of Proposition \ref{prop-proof-3dim-iteration}]
	According to Lemma \ref{lem-Sobolev-rotaition}, we have $$\|\partial \tilde{u}\|_{\infty}\leq C_S{\<r\>}^{-1}\|\tilde{u}\|_{E_2}<\half.$$ 
	Because $\partial \tilde{u}\in C_tH^{2}(M)\cap C_t^2L_x^2(M)$, it follows that, by the chain rule, $F(\partial \tilde{u})\in C_t^2((H^{-1}(M))_{w*})$, $\partial^{\leq 1}[F(\partial \tilde{u})]\in C_bL_{x,loc}^2$, and
	\begin{equation*}
		\begin{gathered}
			|F(\partial \tilde{u})|\lesssim \rho(C_S \|\tilde{u}\|_{E_2} {\<r\>}^{-1})|\partial \tilde{u}|^2, \\
			|Z[F(\partial \tilde{u})]|\lesssim \rho(C_S \|\tilde{u}\|_{E_2} {\<r\>}^{-1})|\partial \tilde{u}||Z\partial \tilde{u}|,\\
			|Z^{2}[F(\partial \tilde{u})]|\lesssim \rho(C_S \|\tilde{u}\|_{E_2} {\<r\>}^{-1})\left(|\partial \tilde{u}||Z^{2}\partial \tilde{u}|+|Z\partial \tilde{u}|^2\right).
		\end{gathered}
	\end{equation*}
	Also, the compatibility condition of the problem \eqref{eq-proof-thm-3dim-prop-iteration} coincides with that of the problem \eqref{eq-critnonlin-M}.
	Thus, by Proposition \ref{prop-localenergy}, as long as $Z^{\leq2} [F(\partial \tilde{u})]\in LE^*+L_t^1L_x^2$, it follows 
	\begin{equation}\label{eq-proof-thm-3dim-2orderenergyiteration}
		\begin{aligned}
			\|I[\tilde{u}]\|_{LE_2\cap E_2}&\lesssim\sum_{|\alpha|\leq 2}\|(\nabla,\Omega)^\alpha(\nabla f, g)\|_{L_x^2}+\|Z^\alpha [F(\partial \tilde{u})]\|_{LE^*+L_t^1L_x^2}\\
			&\quad+\sum_{|\gamma|\leq 1}\|Z^\gamma [F(\partial \tilde{u})](0,x)\|_{L_x^2}+\|\partial^\gamma [F(\partial \tilde{u})]\|_{(L_t^\infty\cap L_t^2)L_x^2(B_{2R})}\\
			&\lesssim \ep+\rho(C_S \|\tilde{u}\|_{E_2})\|\tilde{u}\|_{E_2}\|\tilde{u}\|_{LE_1\cap E_1} +\sum_{|\alpha|\leq 2}\|Z^\alpha [F(\partial \tilde{u})]\|_{LE^*+L_t^1L_x^2}.
		\end{aligned}
	\end{equation}
	According to the definition of $\|\cdot\|_{LE^*+L_t^1L_x^2}$, we have
	\begin{equation}\label{eq-proof-thm-3dim-nonlinerestimate}
		\|Z^\alpha [F(\partial \tilde{u})]\|_{LE^*+L_t^1L_x^2}\leq \|\chi_TZ^\alpha [F(\partial \tilde{u})]\|_{LE^*}+\|\bar{\chi}_TZ^\alpha [F(\partial \tilde{u})]\|_{L_t^1L_x^2}.
	\end{equation}
	Noting that $\supp \Phi_j\subset\{x:2^{j-1}\leq|x|\leq2^{j+1}\}$, $j\geq1$, we obtain that
	\begin{equation*}
		\begin{aligned}
			&\quad \|\chi_TZ^{\leq 2} [F(\partial \tilde{u})]\|_{LE^*}\\
			&\lesssim \sum_{j=0}^{2^{j-1}<\<T\>}2^{j\half}\|\Phi_j(x) \rho(C_S\|\tilde{u}\|_{E_2}{\<r\>}^{-1})\left(|\partial \tilde{u}||Z^{\leq2}\partial \tilde{u}|+|Z\partial \tilde{u}|^2\right)\|_{L_t^2L_x^2}. \\
		\end{aligned}
	\end{equation*} 
	By Lemma \ref{lem-Sobolev-rotaition} and \ref{lem-seriessum}, we deduce that
	\begin{align*}
			&\quad\sum_{j=1}^{2^{j-1}<\<T\>}2^{j\half}\|\Phi_j(x)\rho(C_S \|\tilde{u}\|_{E_2}{\<r\>}^{-1}) |\partial \tilde{u}||Z^{\leq2}\partial \tilde{u}|\|_{L_t^2L_x^2}\\
			&\lesssim \sum_{j=1}^{2^{j-1}<\<T\>}2^{j\half}\rho(C_S \|\tilde{u}\|_{E_2}2^{-(j-1)})2^{-(j-1)}\|\Phi_j(x) |Z^{\leq2}\partial \tilde{u}|\|_{L_t^2L_x^2}\|\tilde{u}\|_{E_2}\\
			&\lesssim{\left(\sum_{j=1}^{2^{j-1}<\<T\>}\rho(C_S \|\tilde{u}\|_{E_2}2^{-(j-1)})\right)} \|\tilde{u}\|_{E_2}\|\tilde{u}\|_{LE_2}\\
			&\lesssim \left(\rho(C_S\|\tilde{u}\|_{E_2})+\int_{{\<T\>}^{-1}}^{1} \frac{\rho(C_S\|\tilde{u}\|_{E_2}\tau)}{\tau}d\tau\right)\|\tilde{u}\|_{E_2}\|\tilde{u}\|_{LE_2},
	\end{align*}
	and
	\begin{align*}
			&\quad\sum_{j=1}^{2^{j-1}<\<T\>}2^{j\half}\|\Phi_j(x)\rho(C_S \|\tilde{u}\|_{E_2}{\<r\>}^{-1}) |Z\partial \tilde{u}|^2\|_{L_t^2L_x^2}\\
			&\lesssim \sum_{j=1}^{2^{j-1}<\<T\>}2^{j\half}\rho(C_S \|\tilde{u}\|_{E_2}2^{-(j-1)})2^{-(j-1)}\|\|{\<r\>}^\half Z\partial \tilde{u}\|_{L_x^4}\cdot\|{\<r\>}^\half\Phi_j(x)Z\partial \tilde{u}\|_{L_x^4}\|_{L_t^2}\\
			&\lesssim \sum_{j=1}^{2^{j-1}<\<T\>}2^{-j\half}\rho(C_S \|\tilde{u}\|_{E_2}2^{-(j-1)})\|\tilde{u}\|_{E_2}\|\Phi_j(x)Y^{\leq1}Z\partial \tilde{u}+[\partial^{\leq1}\Phi_j(x)]Z\partial \tilde{u}\|_{L_t^2L_x^2}\\
			&\lesssim \left(\rho(C_S\|\tilde{u}\|_{E_2})+\int_{{\<T\>}^{-1}}^{1} \frac{\rho(C_S\|\tilde{u}\|_{E_2}\tau)}{\tau}d\tau\right)\|\tilde{u}\|_{E_2}\|\tilde{u}\|_{LE_2}\\
			&\quad+\sum_{j=1}^{2^{j-1}<\<T\>}\sum_{i=-1}^{1}2^{-j\half}\rho(C_S \|\tilde{u}\|_{E_2}2^{-(j-1)})\|\tilde{u}\|_{E_2}\|\Phi_{j+i}(x)Z\partial \tilde{u}\|_{L_t^2L_x^2}\\
			&\lesssim\left(\rho(C_S\|\tilde{u}\|_{E_2})+\int_{{\<T\>}^{-1}}^{1} \frac{\rho(C_S\|\tilde{u}\|_{E_2}\tau)}{\tau}d\tau\right)\|\tilde{u}\|_{E_2}\|\tilde{u}\|_{LE_2}.
	\end{align*}
	Therefore, it follows
	\begin{equation}\label{eq-proof-thm-3dim-nonlinerestimate-smaller}
		\begin{aligned}
			&\quad\|\chi_TZ^{\leq 2} [F(\partial \tilde{u})]\|_{LE^*}\\
			&\lesssim{\left(\rho(C_S\|\tilde{u}\|_{E_2})+\int_{{\<T\>}^{-1}}^{1} \frac{\rho(C_S\|\tilde{u}\|_{E_2}\tau)}{\tau}d\tau\right)}\|\tilde{u}\|_{E_2}\|\tilde{u}\|_{LE_2},
		\end{aligned}
	\end{equation}
	and, if $\int_{0}^{1}\frac{\rho(\tau)}{\tau}d\tau<\infty$,
	\begin{equation}\label{eq-proof-thm-3dim-nonlinerestimate-smaller-Tinfty}
		\|Z^{\leq 2} [F(\partial \tilde{u})]\|_{LE^*}\lesssim{\left(\rho(C_S\|\tilde{u}\|_{E_2})+\int_{0}^{1} \frac{\rho(C_S\|\tilde{u}\|_{E_2}\tau)}{\tau}d\tau\right)}\|\tilde{u}\|_{E_2}\|\tilde{u}\|_{LE_2}.
	\end{equation}
	If $T$ is finite, due to $\tilde{u}\in S_T(1/(2C_S))$, we have 
	\begin{equation}\label{eq-proof-thm-3dim-nonlinerestimate-larger}
		\begin{aligned}
			\|\bar{\chi}_TZ^{\leq 2} [F(\partial \tilde{u})]\|_{L_t^1L_x^2}&\lesssim T\|\bar{\chi}_TZ^{\leq 2} [F(\partial \tilde{u})]\|_{L_t^\infty L_x^2}\\
			&\lesssim T\rho(C_S \|\tilde{u}\|_{E_2} {\<T\>}^{-1}){\<T\>}^{-1}\|\tilde{u}\|_{E_2}
			^2\\
			&\lesssim \rho(C_S \|\tilde{u}\|_{E_2})\|\tilde{u}\|_{E_2}
			^2.
		\end{aligned}
	\end{equation}
	Combining \eqref{eq-proof-thm-3dim-nonlinerestimate}, \eqref{eq-proof-thm-3dim-nonlinerestimate-smaller}, \eqref{eq-proof-thm-3dim-nonlinerestimate-smaller-Tinfty}, and \eqref{eq-proof-thm-3dim-nonlinerestimate-larger}, we deduce that
	\begin{equation}\label{eq-proof-thm-3dim-nonlinerestimate-final}
		\|Z^\alpha [F(\partial \tilde{u})]\|_{LE^*+L_t^1L_x^2}\lesssim\Lambda(T,\|\tilde{u}\|_{E_2})\|\tilde{u}\|_{E_2}\|\tilde{u}\|_{E_2\cap LE_2}.
	\end{equation}
	Finally, using \eqref{eq-proof-thm-3dim-2orderenergyiteration} and \eqref{eq-proof-thm-3dim-nonlinerestimate-final}, we obtain the estimates \eqref{eq-prop-3dim-2regular-fbound}.
	
	As for the estimate \eqref{eq-prop-3dim-2regular-fdiffer},
	\begin{equation}\label{eq-proof-thm-3dim-2orderenergydiffer}
		\begin{aligned}
			\|I[\tilde{u}]-I[\tilde{v}]\|_{LE_1\cap E_1}&\lesssim\|Z^{\leq1} [F(\partial \tilde{u})-F(\partial \tilde{v})]\|_{LE^*+L_t^1L_x^2}\\
			&\quad+\|F(\partial \tilde{u})-F(\partial \tilde{v})\|_{(L_t^\infty\cap L_t^2)L_x^2(B_{2R})}. \\
		\end{aligned}
	\end{equation}
	We denote $\tilde{w}=\tilde{u}-\tilde{v}$ and use $P$ to stand for $P(\tilde{u},\tilde{v})$. Noting that
	\begin{equation*}
		\begin{gathered}
			|F(\partial \tilde{u})-F(\partial \tilde{v})|\lesssim \rho(C_S P {\<r\>}^{-1}){\<r\>}^{-1}P|\partial \tilde{w}|, \\
			|Z[F(\partial \tilde{u})-F(\partial \tilde{v})]|\lesssim \rho(C_S P {\<r\>}^{-1}){\<r\>}^{-1}(P|Z\partial \tilde{w}|+\<r\>|Z\partial \tilde{v}||\partial \tilde{w}|),\\
		\end{gathered}
	\end{equation*}
	we have
	\begin{gather}
		\|F(\partial \tilde{u})-F(\partial \tilde{v})\|_{(L_t^\infty\cap L_t^2)L_x^2(B_{2R})}\lesssim \rho(C_S P)P\|\tilde{w}\|_{LE\cap E},\label{eq-proof-thm-3dim-lowerorderenergydifer}\\
		\begin{aligned}
			\|\bar{\chi}_TZ^{\leq 1} [F(\partial \tilde{u})-F(\partial \tilde{v})]\|_{L_t^1L_x^2}&\lesssim T\|\bar{\chi}_TZ^{\leq 1} [F(\partial \tilde{u})-F(\partial \tilde{v})]\|_{L_t^\infty L_x^2}\\
			&\lesssim T\rho(C_S P {\<T\>}^{-1}){\<T\>}^{-1}P\|\tilde{w}\|_{E_1}\\
			&\lesssim \rho(C_S P )P\|\tilde{w}\|_{E_1}. 
		\end{aligned}\label{eq-proof-thm-3dim-nonlinerestimatediffer-larger}
	\end{gather}
	Also,
	\begin{equation}\label{eq-proof-thm-3dim-nonlinerestimatediffer-smaller}
		\begin{aligned}
			&\quad \|\chi_TZ^{\leq1} [F(\partial \tilde{u})-F(\partial \tilde{v})]\|_{LE^*}\\
			&\lesssim\sum_{j=0}^{2^{j-1}<\<T\>}2^{j\half}\|\Phi_j(x) \rho(C_S P{\<r\>}^{-1})\left({\<r\>}^{-1}P|Z^{\leq1}\partial \tilde{w}|+|Z\partial \tilde{v}||\partial\tilde{w}|\right)\|_{L_t^2L_x^2}\\
			&\lesssim {\left(\rho(C_S P)+\int_{{\<T\>}^{-1}}^{1}\frac{\rho(C_S P\tau)}{\tau}d\tau\right)} P\|\tilde{w}\|_{LE_1\cap E_1}\\
		\end{aligned}
	\end{equation}
	and, if $\int_{0}^{1}\frac{\rho(\tau)}{\tau}d\tau<\infty$,
	\begin{equation}\label{eq-proof-thm-3dim-nonlinerestimatediffer-smaller-Tinfty}
		\|\chi_TZ^{\leq1} [F(\partial \tilde{u})-F(\partial \tilde{v})]\|_{LE^*}\lesssim{\left(\rho(C_S P)+\int_{0}^{1}\frac{\rho(C_S P\tau)}{\tau}d\tau\right)} P\|\tilde{w}\|_{LE_1\cap E_1}.
	\end{equation}
	Hence,  combining \eqref{eq-proof-thm-3dim-2orderenergydiffer}, \eqref{eq-proof-thm-3dim-lowerorderenergydifer}, \eqref{eq-proof-thm-3dim-nonlinerestimatediffer-smaller}, \eqref{eq-proof-thm-3dim-nonlinerestimatediffer-smaller-Tinfty}, and \eqref{eq-proof-thm-3dim-nonlinerestimatediffer-larger}, we obtain the estimates \eqref{eq-prop-3dim-2regular-fdiffer}.
\end{proof}

Using Proposition \ref{prop-proof-3dim-iteration}, we can finish the proof of Theorem \ref{thm-3dim}.

\begin{proof}[Proof of Theorem \ref{thm-3dim}]
The uniqueness is obvious. We just focus on the existence and the lower bound of the lifespan.

Consider the following Dirichlet-wave equation
\begin{equation}\label{eq-initialiteration}
	\left\{\begin{aligned}
		&\Box u(t,x)=[P_0(x)+P_1(x)t]\theta(t)&&, \ (t,x)\in (0,T)\times M, \ \\
		&u(t,x)=0&&, \ x\in \partial M,  \ t>0 , \\
		&u(0,x)=f(x), \ u_t(0,x)=g(x) &&, \ x\in M,
	\end{aligned}
	\right.
\end{equation}
where
\begin{equation*}
	\begin{aligned}
		P_j(x)=-\Delta[\Psi_j(\nabla^{\leq j} f,\nabla^{\leq j-1}g)](x)+\Psi_{j+2}(\nabla^{\leq j+2} f,\nabla^{\leq j+1}g)(x), \ j=0, 1,
	\end{aligned}
\end{equation*}
and $\theta(t)$ is a cut-off function such that $\theta(0)\equiv1$ in some neighborhood of $0$.
By a direct calculation, we find that the equation \eqref{eq-initialiteration} has the same compatibility condition as that of the equation \eqref{eq-critnonlin-M}. 
By Proposition \ref{prop-localenergy}, there exists a constant $C_0>1$ such that the solution $u_0$ to the equation \eqref{eq-initialiteration} satisfying
\begin{equation*}
	\|u_0\|_{LE_{2}\cap E_2}\leq C_0C_1\ep.
\end{equation*}
As long as we choose $\ep$ and $T$ such that
\begin{equation}\label{eq-prof-thm-3dim-ep-T-choose}
	\left\{
	\begin{gathered}
		C_0C_1\ep\leq\frac{1}{2C_S},\\
		(C_0C_1)^2\ep\Lambda(T,C_0C_1\ep)\leq C_0-1,\\
		C_0C_1C_2\ep\Lambda(T,C_0C_1\ep)\leq\half,
	\end{gathered}
	\right.
\end{equation}
we can take $u_j=I[u_{j-1}]$, $j\in\N^+$, and deduce that, by \eqref{eq-prop-3dim-2regular-fbound} and \eqref{eq-prop-3dim-2regular-fdiffer}, for all $j\in\N^+$,
\begin{gather*}
	\|u_j\|_{LE_2\cap E_2}\leq C_0C_1\ep , \\
	\|u_{j+1}-u_{j}\|_{LE_{1}\cap E_1}\leq \half\|u_{j}-u_{j-1}\|_{LE_1\cap E_1}.
\end{gather*}
Thus, we find a unique solution
\begin{equation*}
	u\in L_{t,loc}^\infty H^3, \ \partial^i u\in L_t^\infty H^{3-i}\cap C_bH^{2-i}, \ 1\leq i\leq2,
\end{equation*}
with $\|u\|_{LE_2\cap E_2}\leq C_0C_1\ep$. Strictly speaking, to complete the proof, we need also to prove the regularity of the solution, that is, $\partial^i u\in C_tH^{3-i}$, $0\leq i \leq2$. As this is standard, we omit details here and refer the reader to the end of Section 4 in \cite{MR3378835} or \cite[P533]{MR2980460}.

To conclude this subsection, we discuss the global existence and the lower bound estimate of the lifespan.
Obviously, if $\int_{0}^{1}\frac{\rho(\tau)}{\tau}d\tau<\infty$, we can choose $T=\infty$ in \eqref{eq-prof-thm-3dim-ep-T-choose}, that is,
\begin{equation*}
	\left\{
	\begin{gathered}
		C_0C_1\ep\leq\frac{1}{2C_S},\\
		(C_0C_1)^2\ep\Lambda(\infty,C_0C_1\ep)\leq C_0-1,\\
		C_0C_1C_2\ep\Lambda(\infty,C_0C_1\ep)\leq\half.
	\end{gathered}
	\right.
\end{equation*}
Due to $\lim_{t\rightarrow0}\rho(\tau)=0$, there exists an $\ep_1$ such that for all $\ep\in(0,\ep_1)$, the problem \eqref{eq-critnonlin-M} has a unique global solution. Otherwise, we choose $\ep$ satisfying
\begin{equation*}
	\left\{
	\begin{gathered}
		C_0C_1\ep\leq\frac{1}{2C_S},\\
		(C_0C_1)^2\ep\rho(C_SC_0C_1\ep)\leq\frac{C_0-1}{2},\\
		C_0C_1C_2\ep\rho(C_SC_0C_1\ep)\leq\frac{1}{4}.
	\end{gathered}
	\right.
\end{equation*}
Denote
\begin{equation*}
	\tilde{c}_1=\min\left\{\frac{C_0-1}{2(C_0C_1)^2},\frac{1}{4C_0C_1C_2}\right\}, \ \tilde{c}_2=C_SC_0C_1,
\end{equation*}
and, to fulfill the requirement \eqref{eq-prof-thm-3dim-ep-T-choose}, take $T_\ep$ such that 
\begin{equation*}
	\tilde{c}_1\ep^{-1}=\int_{{\<T_\ep\>}^{-1}}^{1}\frac{\rho(\tilde{c}_2\ep\tau)}{\tau}d\tau=\mathcal{H}_{2,3}(\tilde{c}_2\ep{\<T_\ep\>}^{-1})-\mathcal{H}_{2,3}(\tilde{c}_2\ep),
\end{equation*}
that is,
\begin{equation*}
	\<T_\ep\>=\tilde{c}_2\ep[\mathcal{H}_{2,3}^{inv}(\tilde{c}_1\ep^{-1}+\mathcal{H}_{2,3}(\tilde{c}_2\ep))]^{-1}.\qedhere
\end{equation*}
\end{proof}

\section{Proofs of Remark \ref{coro-Euclidgeq3} and Theorem \ref{coro-Euclid2}}
The proofs of Remark \ref{coro-Euclidgeq3} and Theorem \ref{coro-Euclid2} are established by (local) energy estimates and the control of the $L^\infty$ norm of $\partial u$. In this section, we give sketches of the proofs.

\subsection{Proof of Remark \ref{coro-Euclidgeq3}}
In this subsection, $E_i$ and $LE_i$ stand for $E_{i,Z}$ and $LE_{i,Z}$. This time the solution will be obtained by iteration in space $S_{\kappa,Z,\infty}$.

To obtain the $\kappa$th-order version estimates of \eqref{eq-prop-3dim-2regular-fbound} and \eqref{eq-prop-3dim-2regular-fdiffer}, that is, there exists constants $C_3$ and $C_4$ such that, for all $\tilde{u},\tilde{v}\in S_\infty(1/(2C_S))$,
\begin{gather*}
	\|I[\tilde{u}]\|_{LE_\kappa\cap E_\kappa}\leq C_3\ep+C_3\|\tilde{u}\|_{E_\kappa}^{\kappa-1}\|\tilde{u}\|_{LE_\kappa\cap E_\kappa} , \\
	\|I[\tilde{u}]-I[\tilde{v}]\|_{LE_{\kappa-1}\cap E_{\kappa-1}}\leq C_4P_\kappa(\tilde{u},\tilde{v})^{\kappa-1}\|\tilde{u}-\tilde{v}\|_{LE_{\kappa-1}\cap E_{\kappa-1}},
\end{gather*}
where $P_\kappa(\tilde{u},\tilde{v})=\max\{\|\tilde{u}\|_{LE_\kappa\cap E_\kappa},\|\tilde{v}\|_{LE_\kappa\cap E_\kappa}\}$, we only need to verify
\begin{equation}\label{eq-proof-coro-Euclidgeq3-nonlinearcontrol}
	\|Z^{\leq\kappa} [F(\partial \tilde{u})]\|_{LE^*}\lesssim\|\tilde{u}\|_{E_\kappa}^{\kappa-1}\|\tilde{u}\|_{LE_\kappa\cap E_\kappa}.
\end{equation}
By the chain rule, for $\partial \tilde{u}\in C_tH^{\kappa}(M)\cap C_t^\kappa L_x^2(M)$,
\begin{equation}\label{eq-proof-coro-Euclidgeq3-chainrule}
	\begin{aligned}
		&\quad \|Z^{\leq \kappa} [F(\partial \tilde{u})]\|_{LE^*}\\
		&\lesssim\|F(\partial \tilde{u})\|_{LE^*}+\\
		&\quad \rho(C_S\|\tilde{u}\|_{E_\kappa}) \sum_{j=0}^{\infty}2^{j\half}\sum_{\substack{1\leq\mu\leq \kappa\\ 1\leq b_1+\dots+b_\mu\leq\kappa}}\|\Phi_j(x) \left(|\partial \tilde{u}|^{\kappa-\mu}|Z^{b_1}\partial \tilde{u}|\cdots|Z^{b_\mu}\partial \tilde{u}|\right)\|_{L_t^2L_x^2}\\
		&\lesssim\|F(\partial \tilde{u})\|_{LE^*}+\\
		&\quad \sum_{j=0}^{\infty}\sum_{\substack{1\leq\mu\leq \kappa\\ 1\leq b_1+\dots+b_\mu\leq\kappa}}2^{-j\frac{(n-1)(\kappa-\mu)-1}{2}}\|\tilde{u}\|_{E_\kappa}^{\kappa-\mu}\|\Phi_j(x) \left(|Z^{b_1}\partial \tilde{u}|\cdots|Z^{b_\mu}\partial \tilde{u}|\right)\|_{L_t^2L_x^2}
	\end{aligned}
\end{equation}
Noticing that
\begin{equation}\label{eq-proof-coro-Euclidgeq3-Linfty}
	|Z^{b_i}\partial \tilde{u}|\lesssim\<r\>^{-\frac{n-1}{2}}\|\tilde{u}\|_{E_\kappa}, \ \text{if} \ b_i+\frac{n}{2}<\kappa,
\end{equation}
we can focus on the terms in the last line of \eqref{eq-proof-coro-Euclidgeq3-chainrule} where $\mu\geq2$ and $b_i+\frac{n}{2}\geq\kappa$, $\forall 1\leq i\leq\mu$. 
Because 
\begin{equation*}
	\frac{\mu-1}{2}-\frac{(\mu-1)\kappa-\sum_{1}^{\mu-1}b_i}{n}<\half, \ \text{for} \ \mu\geq2, \ \kappa>\frac{n}{2},
\end{equation*}
it is always possible for us to choose $\{q_i\}_{1\leq i\leq \mu}$ such that
\begin{equation*}
	\left\{
	\begin{aligned}
		&\frac{1}{q_i}=\left(\half-\frac{\kappa-b_i}{n}\right)_+, \ 1\leq i \leq \mu-1,\\
		&\frac{1}{q_\mu}=\half-\sum_{i=1}^{\mu-1}\frac{1}{q_i},
	\end{aligned}\right.
\end{equation*}
Then, it follows that, by Lemma \ref{lem-Sobolev-rotaition},
\begin{equation}\label{eq-proof-coro-Euclidgeq3-Lq}
	\begin{aligned}
		&\quad\|\Phi_j(x) \left(|Z^{b_1}\partial \tilde{u}|\cdots|Z^{b_\mu}\partial \tilde{u}|\right)\|_{L_t^2L_x^2}\\
		&= \|\Phi_j(x)\<r\>^{-\frac{(n-1)(\mu-1)}{2}}\prod_{i=1}^{\mu} \left(\<r\>^{(n-1)(\half-\frac{1}{q_i})}|Z^{b_i}\partial \tilde{u}|\right)\|_{L_t^2L_x^2}\\
		&\lesssim2^{-j\frac{(n-1)(\mu-1)}{2}}\|\tilde{u}\|_{E_\kappa}^{\kappa-1}\|Y^{\leq\kappa-b_\mu}[\Phi_j(x)Z^{b_\mu}\partial \tilde{u}]\|_{L_t^2L_x^2}\\
		&\lesssim2^{-j\frac{(n-1)(\mu-1)}{2}}\|\tilde{u}\|_{E_\kappa}^{\kappa-1}\|[\Phi_{j-1}(x)+\Phi_j(x)+\Phi_{j+1}(x)]Z^{\leq\kappa}\partial \tilde{u}\|_{L_t^2L_x^2}.
	\end{aligned}
\end{equation}
Hence, combining \eqref{eq-proof-coro-Euclidgeq3-chainrule}, \eqref{eq-proof-coro-Euclidgeq3-Linfty}, and \eqref{eq-proof-coro-Euclidgeq3-Lq}, we establish the desired inequality \eqref{eq-proof-coro-Euclidgeq3-nonlinearcontrol}.

Let $u_0$ be the solution to the following Dirichlet-wave equation
\begin{equation}\label{eq-proof-coro-Euclidgeq3-initialiter}
	\left\{\begin{aligned}
		&\Box u(t,x)=\left[\sum_{j=0}^{\kappa-1}\frac{t^j}{j!}P_j(x)\right]\theta(t)&&, \ (t,x)\in (0,T)\times M, \ \\
		&u(t,x)=0&&, \ x\in \partial M,  \ t>0 , \\
		&u(0,x)=f(x), \ u_t(0,x)=g(x) &&, \ x\in M,
	\end{aligned}
	\right.
\end{equation}
where
\begin{equation*}
	\begin{aligned}
		P_j(x)=-\Delta[\Psi_j(\nabla^{\leq j} f,\nabla^{\leq j-1}g)](x)+\Psi_{j+2}(\nabla^{\leq j+2} f,\nabla^{\leq j+1}g)(x), \ 0\leq j\leq\kappa-1,
	\end{aligned}
\end{equation*}
and $\theta(t)$ is a cut-off function such that $\theta(0)\equiv1$ in some neighborhood of $0$. 
By a direct calculation, we find that the equation \eqref{eq-proof-coro-Euclidgeq3-initialiter} has the same compatibility condition as that of the equation \eqref{eq-critnonlin-M}. 
As long as $\ep$ is small enough, the iteration sequence, $u_j=I[u_{j-1}]$, $j\in\N^+$, will converge to the unique global solution to the problem \eqref{eq-critnonlin-M}.

\subsection{Proof of Theorem \ref{coro-Euclid2}}
In this subsection, $E_i$ and $S_T$ stand for $E_{i,\Gamma}$ and $S_{\kappa,\Gamma,T}$.

When $n=2$, due to the lack of local energy estimate, the problem \eqref{eq-critnonlin-M} is suspended for non-empty obstacles. For $M=\R^2$ and $\kappa\geq2$, standard energy estimates and Klainerman-Sobolev inequalities are enough to bring out the expected results.

By energy estimates, for all $\tilde{u}\in S_T(1/(2C_{KS})$, it follows
\begin{equation}\label{eq-prof-coro-Euclid2-itera-enrgy}
	\begin{aligned}
		&\quad\|I[\tilde{u}]\|_{E_\kappa}\\
		&\lesssim \ep+\|\Gamma^{\leq\kappa}[F(\partial\tilde{u})]\|_{L_t^1L_x^2}\\
		&\lesssim\ep+\|F(\partial\tilde{u})\|_{L_t^1L_x^2}+\sum_{\substack{1\leq\mu\leq \kappa\\ 1\leq b_1+\dots+b_\mu\leq\kappa}} \||\partial^\mu F(\partial \tilde{u})||\Gamma^{b_1}\partial \tilde{u}|\cdot\cdots\cdot|\Gamma^{b_\mu}\partial \tilde{u}|\|_{L_t^1L_x^2}.\\
	\end{aligned}
\end{equation}
Just as in the poof of Remark \ref{coro-Euclidgeq3}, we only need to focus on terms where $\mu\geq2$ and $b_i+1\geq\kappa$, $\forall 1\leq i\leq\mu$.
Then, we have $\sum_{i=1}^{\mu}b_i+\mu\geq\kappa\mu \Rightarrow \mu\leq\frac{\kappa}{\kappa-1}$. Due to $\kappa\geq2$, we only need to investigate terms with $\kappa=\mu=2$, that is,
\begin{equation}\label{eq-prof-coro-Euclid2-kappa-mu-2}
	\begin{aligned}
		&\quad\||\partial^2 F(\partial \tilde{u})||\Gamma\partial \tilde{u}|^2\|_{L_x^2}\\
		&\leq\|\chi_t|\partial^2 F(\partial \tilde{u})||\Gamma\partial \tilde{u}|^2\|_{L_x^2}+\|\bar{\chi}_t|\partial^2 F(\partial \tilde{u})||\Gamma\partial \tilde{u}|^2\|_{L_x^2}\\
		&\lesssim\rho(C_{KS}\<t\>^{-\frac{1}{2}}\|\tilde{u}\|_{E_2})\Big(\|\chi_t\Gamma\partial \tilde{u}\|_{{L_x^4}}^2+\<t\>^{-\half}\|(|x|^\half|\Gamma\partial \tilde{u}|)|\Gamma\partial \tilde{u}|\|_{L_x^2}\Big)\\
		&\lesssim\rho(C_{KS}\<t\>^{-\frac{1}{2}}\|\tilde{u}\|_{E_2})\Big(\<t\>^{-1}\|\tilde{u}\|_{E_2}^{2}+\<t\>^{-\half}\|(|x|^\half|\Gamma\partial \tilde{u}|)\|_{L_r^\infty L_\omega^4}\|\Gamma\partial \tilde{u}\|_{L_r^2 L_\omega^4}\Big)\\
		&\lesssim\rho(C_{KS}\<t\>^{-\frac{1}{2}}\|\tilde{u}\|_{E_2})\<t\>^{-\frac{1}{2}}\|\tilde{u}\|_{E_2}^{2},
	\end{aligned}
\end{equation}
where we used the Proposition \ref{prop-KSinequality}, Proposition \ref{prop-traceestimate-Rn}, and Sobolev embedding on $\mathbb{S}^1$. Combining \eqref{eq-prof-coro-Euclid2-itera-enrgy} and \eqref{eq-prof-coro-Euclid2-kappa-mu-2}, one deduces that
\begin{equation*}
		\|I[\tilde{u}]\|_{E_\kappa}
		\lesssim \ep+\int_{0}^{T}\rho(C_{KS}\|\tilde{u}\|_{E_\kappa}\<t\>^{-\half})\<t\>^{-\frac{\kappa-1}{2}}dt \cdot \|\tilde{u}\|_{E_\kappa}^\kappa.
\end{equation*}
Also, for all $\tilde{u}, \tilde{v}\in S_T(1/(2C_{KS})$ , one has
\begin{equation*}
	\|I[\tilde{u}]-I[\tilde{v}]\|_{E_{\kappa-1}}\lesssim \int_{0}^{T}\rho(C_{KS}W_\kappa(\tilde{u},\tilde{v})\<t\>^{-\half})\<t\>^{-\frac{\kappa-1}{2}}dt \cdot W_\kappa(\tilde{u},\tilde{v})^{\kappa-1}\|\tilde{u}-\tilde{v}\|_{E_{\kappa-1}},
\end{equation*}
where $W_\kappa(\tilde{u},\tilde{v})=\max\{\|\tilde{u}\|_{E_\kappa},\|\tilde{v}\|_{E_\kappa}\}$. Thus, if $\int_{0}^{1}\frac{\rho(\tau)}{\tau}\tau^{\kappa-3} d\tau<\infty$, there exists a unique global solution to the problem \eqref{eq-critnonlin-M}. When $\int_{0}^{1}\frac{\rho(\tau)}{\tau}\tau^{\kappa-3} d\tau=\infty$, due to 
\begin{equation*}
	\int_{s}^{1}\frac{\rho(C_{KS}\ep\tau)}{\tau}\frac{\tau^{\kappa-3}}{\sqrt{1-\tau^4}}d\tau\sim\rho(C_{KS}\ep)+\int_{s}^{1}\frac{\rho(C_{KS}\ep\tau)}{\tau}\tau^{\kappa-3}d\tau,
\end{equation*}
there exist positive constants $\tilde{c}_3$, $\tilde{c}_4$ such that we have a unique solution on $[0,T_\ep]$ with $T_\ep$ satisfying
\begin{equation*}
		\tilde{c}_3=\ep^{\kappa-1}\int_{\<T_\ep\>^{-\frac{1}{2}}}^{1}\frac{\rho(\tilde{c}_4\ep\tau)}{\tau}\tau^{\kappa-3}d\tau=\tilde{c}_4^{3-\kappa}\ep^{2}\int_{\tilde{c}_4\ep\<T_\ep\>^{-\frac{1}{2}}}^{\tilde{c}_4\ep}\frac{\rho(\tau)}{\tau}\tau^{\kappa-3}d\tau,
\end{equation*}
that is,
\begin{equation*}
	\<T_\ep\>=\tilde{c}_4^2\ep^2[\mathcal{H}_{\kappa,2}^{inv}(\tilde{c}_3\tilde{c}_4^{\kappa-3}\ep^{-2}+\mathcal{H}_{\kappa,2}(\tilde{c}_4\ep))]^{-2}.
\end{equation*}

\section{Proofs of Theorem \ref{thm-radial} and Remark \ref{coro-radial-geq4}}

In this section, $E_i$, $LE_i$, and $S_T$ stand for $E_{i,\partial}$, $LE_{i,\partial}$, and $S_{\kappa,\partial,T}$.

\subsection{Proof of Theorem \ref{thm-radial}}
Recall Proposition \ref{prop-traceestimate-M} and $M=\{x:|x|>1\}$. For a radial function $u\in H^1(M)$, there exists a constant $C_{TR}$ such that
\begin{equation*}
	|u(x)|\leq C_{TR}|x|^{-\frac{(n-1)}{2}}\|u\|_{H^1} , \ |x|>1.
\end{equation*}
Thus, following the proof of Proposition \ref{prop-proof-3dim-iteration}, we have, for $\tilde{u},\tilde{v}\in S_T(1/(2C_{TR}))$,
\begin{gather*}
	\|I[\tilde{u}]\|_{LE_1\cap E_1}\leq C_5 \ep+C_5\Lambda(T,\|\tilde{u}\|_{E_1})\|\tilde{u}\|_{LE_1\cap E_1} , \label{eq-proof-radial-1regular-fbound} \\
	\|I[\tilde{u}]-I[\tilde{v}]\|_{LE\cap E}\leq C_6\Lambda(T,P(\tilde{u},\tilde{v}))\|\tilde{u}-\tilde{v}\|_{LE\cap E},\label{eq-proof-radial-1regular-fdiffer}
\end{gather*}
where
\begin{equation*}
	\begin{aligned}
	&\quad\Lambda(T,\zeta)\\
	&=\left\{\begin{aligned}
		&\rho(C_{TR}\zeta)+\inf_{\lambda>1} \left\{\int_{\lambda^{-\frac{n-1}{2}}}^{1}\frac{\rho(C_{TR} \zeta\tau)}{\tau^{p_c(n)}}d\tau+T\rho(C_{TR} \zeta \lambda^{-\frac{n-1}{2}})\right\}&&, \ 0<T<\infty ,\\
		&\rho(C_{TR}\zeta)+\int_{0}^{1}\frac{\rho(C_{TR} \zeta\tau)}{\tau^{p_c(n)}}d\tau &&, \ T=\infty,
	\end{aligned}\right.
\end{aligned}
\end{equation*}
and $P(\tilde{u},\tilde{v})=\max
\{\|\tilde{u}\|_{LE_1\cap E_1},\|\tilde{v}\|_{LE_1\cap E_1}\}$. Once again, let $u_0$ be the solution to the following Dirichlet-wave equation
\begin{equation*}
	\left\{\begin{aligned}
		&\Box u(t,x)=P_0(x)\theta(t)&&, \ (t,x)\in (0,T)\times M, \ \\
		&u(t,x)=0&&, \ x\in \partial M,  \ t>0 , \\
		&u(0,x)=f, \ u_t(0,x)=g &&, \ x\in M,
	\end{aligned}
	\right.
\end{equation*}
where
\begin{equation*}
	P_0(x)=F(g,\partial_r f)(x),
\end{equation*}
and $\theta(t)$ is a cut-off function such that $\theta(0)\equiv1$ in some neighborhood of $0$. Then, there exists a constant $\tilde{C}_0>1$ such that 
\begin{equation*}
	\|u\|_{LE_1\cap E_1}\leq \tilde{C}_0C_5\ep.
\end{equation*}
As long as $\ep$ is small enough, the iteration sequence, $u_j=I[u_{j-1}]$, $j\in\N^+$, will converge to the unique radial solution to the problem \eqref{eq-critnonlin-radial}.

We should select $\ep$ and $T_\ep$ such that 
\begin{equation}\label{eq-prof-thm-2dim-ep-T-choose}
	\left\{
	\begin{gathered}
		\tilde{C}_0C_5\ep\leq\frac{1}{2C_{TR}},\\
		\tilde{C}_0C_5\Lambda(T_\ep,\tilde{C}_0C_5\ep)\leq \tilde{C}_0-1,\\
		C_6\Lambda(T_\ep,\tilde{C}_0C_5\ep)\leq\half.
	\end{gathered}
	\right.
\end{equation}
If $\int_{0}^{1}\frac{\rho(\tau)}{\tau}\tau^{-\frac{2}{n-1}}d\tau<\infty$, there exists a unique radial global solution to the problem \eqref{eq-critnonlin-radial}, as long as $\ep$ fulfills 
\begin{equation*}
	\left\{
	\begin{gathered}
		\tilde{C}_0C_5\ep\leq\frac{1}{2C_{TR}},\\
		\tilde{C}_0C_5\Lambda(\infty,\tilde{C}_0C_5\ep)\leq \tilde{C}_0-1,\\
		C_6\Lambda(\infty,\tilde{C}_0C_5\ep)\leq\half.
	\end{gathered}
	\right.
\end{equation*}
When $\int_{0}^{1}\frac{\rho(\tau)}{\tau}\tau^{-\frac{2}{n-1}}d\tau=\infty$, we choose $\ep$ satisfying
\begin{equation*}
	\left\{
	\begin{gathered}
		\tilde{C}_0C_5\ep\leq\frac{1}{2C_{TR}},\\
		\tilde{C}_0C_5\rho(C_{TR}\tilde{C}_0C_5\ep)\leq\frac{\tilde{C}_0-1}{3},\\
		C_6\rho(C_{TR}\tilde{C}_0C_5\ep)\leq\frac{1}{6}.
	\end{gathered}
	\right.
\end{equation*}
Recall that 
\begin{equation*}
	\mathcal{H}_{1,n}(s)=\int_{s}^{1}\frac{\rho(\tau)}{\tau}\tau^{-\frac{2}{n-1}}d\tau.
\end{equation*}
This time, we have
\begin{equation*}
	\mathcal{H}_{1,n}(s)\geq\frac{n-1}{2}\rho(s)[s^{-\frac{2}{n-1}}-1]\geq\frac{n-1}{4}\rho(s)s^{-\frac{2}{n-1}}, \ \text{for} \ 0<s\leq2^{-\frac{n-1}{2}},
\end{equation*}
and
\begin{equation*}
	\mathcal{H}_{1,n}(s)\leq \frac{n-1}{2}\|\rho\|_\infty[s^{-\frac{2}{n-1}}-1].
\end{equation*}
Denote 
\begin{equation*}
	\tilde{c}_5=\min\{\frac{\tilde{C}_0-1}{3\tilde{C}_0C_5},\frac{1}{6C_6}\}, \ \tilde{c}_6=C_{TR}\tilde{C}_0C_5.
\end{equation*}
Notice that $\mathcal{H}_{1,n}$ is decreasing and $\lim_{s\rightarrow0}\mathcal{H}_{1,n}=\infty$, we can take $\lambda_\ep$ such that 
\begin{equation*}
	\tilde{c}_5=\int_{\lambda_\ep^{-\frac{n-1}{2}}}^{1}\frac{\rho(\tilde{c}_6\ep\tau)}{\tau}\tau^{-\frac{2}{n-1}}d\tau=(\tilde{c}_6\ep)^{\frac{2}{n-1}}[\mathcal{H}_{1,n}(\tilde{c}_6\ep\lambda_\ep^{-\frac{n-1}{2}})-\mathcal{H}_{1,n}(\tilde{c}_6\ep)].
\end{equation*}
Further require that 
\begin{equation*}
	\ep\leq \tilde{c}_5^\frac{n-1}{2}\tilde{c}_6^{-1}[\mathcal{H}_{1,n}(2^{-\frac{n-1}{2}})]^{-\frac{n-1}{2}},
\end{equation*}
and then $\tilde{c}_6\ep\lambda_\ep^{-\frac{n-1}{2}}\leq 2^{-\frac{n-1}{2}}$.
Thus, it follows that
\begin{equation*}
	\begin{aligned}
		&\quad \rho\left(\tilde{c}_6\ep\lambda_\ep^{-\frac{n-1}{2}}\right)\\
		&\leq \rho\left(\mathcal{H}_{1,n}^{inv}(\tilde{c}_5\tilde{c}_6^{-\frac{2}{n-1}}\ep^{-\frac{2}{n-1}}+\mathcal{H}_{1,n}(\tilde{c}_6\ep))\right) \\
		&\leq\frac{4}{n-1}\left(\tilde{c}_5\tilde{c}_6^{-\frac{2}{n-1}}\ep^{-\frac{2}{n-1}}+\mathcal{H}_{1,n}(\tilde{c}_6\ep)\right)\left[\mathcal{H}_{1,n}^{inv}(\tilde{c}_5\tilde{c}_6^{-\frac{2}{n-1}}\ep^{-\frac{2}{n-1}}+\mathcal{H}_{1,n}(\tilde{c}_6\ep))\right]^{\frac{2}{n-1}}\\
		&\leq\frac{2(2\tilde{c}_5+(n-1)\|\rho\|_\infty)\tilde{c}_6^{-\frac{2}{n-1}}}{n-1}\ep^{-\frac{2}{n-1}}\left[\mathcal{H}_{1,n}^{inv}(\tilde{c}_5\tilde{c}_6^{-\frac{2}{n-1}}\ep^{-\frac{2}{n-1}}+\mathcal{H}_{1,n}(\tilde{c}_6\ep))\right]^{\frac{2}{n-1}}.
	\end{aligned}
\end{equation*}
Then, to fulfill the requirement \eqref{eq-prof-thm-2dim-ep-T-choose}, we choose $T_\ep$ to be
\begin{equation*}
	T_\ep=\frac{(n-1)\tilde{c}_5\tilde{c}_6^{\frac{2}{n-1}}}{2(2\tilde{c}_5+(n-1)\|\rho\|_\infty)}\ep^{\frac{2}{n-1}}\left[\mathcal{H}_{1,n}^{inv}(\tilde{c}_5\tilde{c}_6^{-\frac{2}{n-1}}\ep^{-\frac{2}{n-1}}+\mathcal{H}_{1,n}(\tilde{c}_6\ep))\right]^{-\frac{2}{n-1}},
\end{equation*}
and then there exists a unique radial solution on $[0,T_\ep]$.

\subsection{Proof of Remark \ref{coro-radial-geq4}}
The proof is similar to that of Remark \ref{coro-Euclidgeq3}.
One just need to establish the inequality for $\tilde{u}\in S_\infty(1/(2C_{TR}))$
\begin{equation}\label{eq-proof-radial-geq4-chainrule}
	\|\partial^{\leq\kappa} [F(\partial_t \tilde{u},\partial_r \tilde{u})]\|_{LE^*}\lesssim\|\tilde{u}\|_{E_\kappa}^{\kappa-1}\|\tilde{u}\|_{LE_\kappa\cap E_\kappa}.
\end{equation}
Notice that for a radial function $w\in H^{\kappa}(M)$,
\begin{equation*}
	|\partial_x^\alpha w|\lesssim|\partial_r^{\leq|\alpha|}w|\lesssim |x|^{-\frac{n-1}{2}} \|\partial_r^{\leq|\alpha|}w\|_{H^1}\lesssim|x|^{-\frac{n-1}{2}} \|\partial^{\leq|\alpha|}w\|_{H^1} , \ |x|>1 , \ |\alpha|\leq\kappa;
\end{equation*}
see Li-Zhou \cite[Lemma 3.1.7 and Lemma 3.4.2]{MR3729480}.
Therefore, following the calculation in \eqref{eq-proof-coro-Euclidgeq3-chainrule}, we obtain the estimate \eqref{eq-proof-radial-geq4-chainrule}.

\section{Proof of Theorem \ref{thm-lifespan-sharp}}

Due to the finite speed of propagation, the semilinear wave equation \eqref{eq-critnonlin-M} can be localized. 
Since, for the problem posed on $\R^n$, blow-up can be showed by constructing an integral near the wave front and derive an ordinary differential inequality, we can use this argument to deduce the blow-up result and provide the upper bound estimates of the lifespan. This argument appears in many former works, e.g., \cite{MR0879355,MR1845748,2306.11478}.

\begin{proof}[Proof of Theorem \ref{thm-lifespan-sharp}]
	According Theorem \ref{thm-radial} and Theorem \ref{coro-Euclid2}, for sufficiently small $\ep$, the lifespan $T(\ep)$ of the problem \eqref{eq-blowup-upperbound-n} has the lower bound \eqref{eq-thm-radial-lowerboundestimate} or \eqref{eq-coro-Euclid2-lowerboundestimate} with $\tau^{1+a-\kappa}\mu(\tau)$ replacing the $\rho(\tau)$ in $\mathcal{H}_{\kappa,n}$. For each $\tilde{T}\in(0,T(\ep))$, the solution $u\in C([0,\tilde{T}];H^2(M))\cap C^1([0,\tilde{T}];H^1(M))$ is radial, hence $\partial_{t}u$ is radial and continuous, and $\|\partial_t u\|_\infty\leq1$ on $[0,\tilde{T}]$. By finite speed of propagation, we deduce that for all $t>0$, $\supp u(t,\cdot) \subset \{x\in\R^n : |x|\leq t+3\}$; see the argument in \cite[Lemma 2.11]{2211.01594}. 
	
	Let a linear operator $*:A(t,x)\mapsto A^*(t,x_1)$ be
	\begin{equation*}
		A^*(t,x_1)=\int_{\R^{n-1}}A(t,x_1,\tilde{x})d\tilde{x}  .
	\end{equation*}
	The operator $*$ is defined for all admissible functions.
	Therefore, $u^*$ is the weak solution to
	\begin{equation*}
		\left\{\begin{aligned}
			&(\partial_{t}^2-\partial_{x_1}^2) u^*(t,x_1)=F_a(\partial_tu)^*(t,x_1)&&, \ (t,x_1)\in (0,T(\ep))\times (1.5,\infty), \ \\
			&u^*(0,x_1)=\ep\phi^*(x_1), \ \partial_tu^*(0,x_1)=0 &&, \ x_1\in (1.5,\infty).
		\end{aligned}
		\right.
	\end{equation*}
	Noticing that $F_a(\partial_tu)$ is continuous, we can use d'Alembert's formula and obtain that, for $z\geq2$
	\begin{equation*}
		u^*(z-2,z)=\frac{\ep(\phi^*(2z-2)+\phi^*(2))}{2}+\half\int_{0}^{z-2}ds\int_{2+s}^{2z-2-s}F_a(\partial_tu)^*(s,x_1)dx_1,
	\end{equation*}
	which implies $u^*(z-2,z)\geq0$; see the argument and details in \cite[Section 4]{MR4819613}. 
	Let $\mathcal{U}(z)=u^*(z-2,z)$. 
	Recall that $\supp u(t,\cdot) \subset \{x\in\R^n : |x|\leq t+3\}$. It follows that, for $z\geq3$,
	\begin{equation}\label{eq-blowup-onedim-u-inequal}
		\mathcal{U}(z)\geq C_\phi\ep+\half\int_{3}^{z}dy\int_{y-3}^{y-2}\int_{|\tilde{x}|\leq\sqrt{(s+3)^2-y^2}}F_a(\partial_tu(s,y,\tilde{x}))d\tilde{x}ds,
	\end{equation}
	where $C_\phi=\half\int_{\R^{n-1}}\phi(2,\tilde{x}) d\tilde{x}$.
	Noting that $\|\partial_t u\|_\infty\leq1$ on $[0,\tilde{T}]$, we obtain that
	\begin{equation*}
		\begin{aligned}
			\mathcal{U}(z)&=u^*(z-3,z)+\int_{z-3}^{z-2}\int_{|\tilde{x}|\leq\sqrt{(s+3)^2-z^2}}\partial_tu(s,z,\tilde{x})d\tilde{x}ds\\
			&\leq \int_{z-3}^{z-2}((s+3)^2-z^2)^\frac{n-1}{2}ds\\
			&=\int_{0}^{1}[(s+2z)s]^\frac{n-1}{2}ds\coloneq M(z),
		\end{aligned}
	\end{equation*}
	and, by Jensen's inequality and the convexity of $F_a(\cdot)$,
	\begin{equation}\label{eq-blowup-onedim-u-inequal-M(y)}
		\begin{aligned}
			&\quad\frac{1}{M(y)}\int_{y-3}^{y-2}\int_{|\tilde{x}|\leq\sqrt{(s+3)^2-y^2}}F_a(\partial_tu(s,y,\tilde{x}))d\tilde{x}ds\\
			&\geq F_a\left(\frac{1}{M(y)}\int_{y-3}^{y-2}\int_{|\tilde{x}|\leq\sqrt{(s+3)^2-y^2}}\partial_tu(s,y,\tilde{x})d\tilde{x}ds\right)\\
			&=F_a\left(\frac{1}{M(y)}\mathcal{U}(y)\right).
		\end{aligned}
	\end{equation}
	Combining \eqref{eq-blowup-onedim-u-inequal} and \eqref{eq-blowup-onedim-u-inequal-M(y)}, we have
	\begin{equation*}
		\begin{aligned}
			\mathcal{U}(z)&\geq C_\phi\ep+\half\int_{3}^{z}M(y)F_a\left(\frac{1}{M(y)}\mathcal{U}(y)\right)dy \\
			&\geq C_\phi\ep+\half\int_{3}^{z}M^{-a}(y)\mathcal{U}^{1+a}(y)\mu\left(\frac{1}{M(y)}\mathcal{U}(y)\right)dy\\
			&\geq C_\phi\ep+\half\int_{3}^{z}3^{-a\frac{n-1}{2}}y^{-a\frac{n-1}{2}}\mathcal{U}^{1+a}(y)\mu\left(3^{-\frac{n-1}{2}}y^{-\frac{n-1}{2}}\mathcal{U}(y)\right)dy,
		\end{aligned}
	\end{equation*}
	where $m=3^\frac{n-1}{2}$.
	Following the argument in \cite[Subsection 3.2]{2306.11478}, we deduce that
	\begin{equation}\label{eq-blowup-prop-prof-upperbound}
		\tilde{T}\leq \tilde{c}_9\ep^\frac{2}{n-1}\left[\mathcal{H}_{1,n}^{inv}(\tilde{c}_7\ep^{-\frac{2}{n-1}}+\mathcal{H}_{1,n}(\tilde{c}_8\ep))\right]^{-\frac{2}{n-1}},
	\end{equation}
	with
	\begin{equation*}
			\tilde{c}_7=\frac{3(n-1)}{a C_\phi^\frac{2}{n-1}}, \
			\tilde{c}_8=C_\phi3^{-(n-1)}, \
			\tilde{c}_9=3^{-1}C_\phi^\frac{2}{n-1}.
	\end{equation*}
	Since \eqref{eq-blowup-prop-prof-upperbound} is valid for any $\tilde{T}\in(0,T(\ep))$, we obtain the sharp upper bound estimates for the lifespan $T(\ep)$.
\end{proof}

\section{Appendix: Regularity in Proposition \ref{prop-localenergy}}\label{section-regularity}
When $\mathcal{K}=\emptyset$, the result is standard by dense argument for $(f,g,F)$. Hereby, the obstacle $\mathcal{K}$ will be non-empty.

Once we obtain the solution $u$ to \eqref{eq-Diriwave-M}, the derivatives of $u$ exist in the distributional sense, belonging to $D'((0,\infty)\times M)$. In this appendix, we give the proof for the regularity result of first-order local energy estimates. For higher order estimates, one can use induction to verify the results. 
\begin{proposition}\label{prop-regularity}
	Let $(f,g)\in H_D^{2}\times H_D^1$, $G\in C_t((H^{-1}(M))_{w*})\cap C_b(L_{loc}^{2}(M))$ with $Z^{\leq 1}G\in LE^*+L_t^1L_x^2$, and $(f,g,G)$ fulfill the compatibility condition of order $2$. Then, $\partial_t^i u\in C_b(H_D^{2-i})$, $1\leq i \leq 2$, and $Yu\in C_b(H^1)$, $\partial_tYu\in C_b(L_x^2)$.
\end{proposition}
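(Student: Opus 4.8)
The plan is to take the solution $u$ furnished by the estimate part of Proposition \ref{prop-localenergy} and bootstrap its regularity, upgrading the $L_t^\infty$ bounds already encoded in $\|u\|_{LE_{1,Z}\cap E_{1,Z}}<\infty$ to continuity in $t$. The base case is available: the first-order estimate supplies $u\in C_b(\dot{H}_D^1(M))$ and $\partial_t u\in C_b(L_x^2(M))$. I would first dispose of the time derivatives, then the spatial and rotational derivatives, and finally record that higher orders follow by induction.

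For the time derivatives, observe that $\partial_t$ commutes with $\Box$ and preserves the homogeneous Dirichlet condition, so $v:=\partial_t u$ solves \eqref{eq-Diriwave-M} with source $\partial_t G$ and Cauchy data $(v(0),\partial_t v(0))=(g,\Delta f+G(0))$. The compatibility of order $2$ is exactly what places these data in the correct spaces: $\tilde{\Psi}_1=g\in H_D^1$ and $\tilde{\Psi}_2=\Delta f+G(0)\in L_x^2$, while $\partial_t G\in LE^*+L_t^1L_x^2$ is part of the hypothesis $Z^{\leq1}G\in LE^*+L_t^1L_x^2$. Applying the base regularity to $v$ then yields $\partial_t u\in C_b(\dot{H}_D^1)$ and $\partial_t^2 u\in C_b(L_x^2)$, which is the $1\leq i\leq2$ conclusion; in particular $\partial_t\nabla u=\nabla\partial_t u\in C_b(L_x^2)$. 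The one point needing care is justifying that the distributional $\partial_t u$ genuinely is the solution associated to these data; this I would secure by a density argument, approximating $(f,g,G)$ by smooth compatible data and passing to the limit through the energy estimate.

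For the spatial derivatives I would split $M$ into a neighborhood of the obstacle and the far region. Near $\partial\mathcal{K}$ I use elliptic regularity: from the equation, $\Delta u=\partial_t^2 u-G$, and by the previous step together with $G\in C_b(L_{loc}^2)$ the right-hand side lies in $C_t(L_{loc}^2)$; since $u\in C_b(\dot{H}_D^1)$ obeys the Dirichlet condition on the smooth boundary, interior-and-boundary elliptic regularity gives $u\in C_t(H^2)$ locally, hence $\nabla u,\Omega u\in C_b(H^1)$ on any fixed ball. The rotational field $\Omega$ is the delicate one because it commutes with $\Box$ but does \emph{not} preserve the Dirichlet condition; to treat it away from the obstacle I introduce a cutoff $\beta$ vanishing near $\mathcal{K}$ and equal to $1$ for large $|x|$, so that $\beta u$ extends to a function on all of $\R^n$ solving $\Box(\beta u)=\beta G-[\Delta,\beta]u$, whose commutator term is supported in a fixed annulus and controlled by $u,\nabla u$. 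On $\R^n$ there is no boundary obstruction, so $\Omega$ and $\nabla$ commute through and the whole-space energy/regularity theory applies to $\Omega(\beta u)$, giving continuity of $\Omega u$ and of $\partial_t\Omega u=\Omega\partial_t u$ in the far region. Patching the near and far estimates yields $Yu\in C_b(H^1)$ and $\partial_tYu\in C_b(L_x^2)$.

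The main obstacle is the limiting/density step: one must approximate $(f,g,G)$ by smooth data that still satisfy the compatibility condition of order $2$ and converge in $\dot{H}_D^1\times H_D^1\times(LE^*+L_t^1L_x^2)$ together with the local datum $G(0)$, since it is only for such smooth compatible data that the solution is classically $C_t$ in the relevant norms and only then does the energy estimate transfer continuity to the limit. Constructing these compatible approximations on the exterior domain, correcting the boundary traces of $f,g$ and their derivatives so that $\tilde{\Psi}_j|_{\partial M}=0$ is preserved, is the technical heart; the non-commutation of $\Omega$ with the boundary condition, handled above by cutoff, is the other point where the exterior geometry genuinely enters. Higher-order ($k$th-order) regularity then follows by induction, differentiating the equation $k$ times in $t$, invoking the order-$(k-1)$ statement, and repeating the elliptic and cutoff arguments.
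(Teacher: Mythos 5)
Your outline reproduces the paper's broad strategy (differentiate in time using the order-$2$ compatibility, then handle $Yu$ by a cutoff near the obstacle plus elliptic regularity, treating the far region as a whole-space problem), but the step you defer --- showing that the distributional derivative $\partial_t u$ really coincides with the energy-class solution $v$ of the $t$-differentiated problem --- is the entire content of the paper's proof, and you have not supplied it. The paper does \emph{not} use a density argument for this. It tests $u$ against $\Box w$ for $w\in C_c^\infty(\overline{(0,\infty)\times M})$ vanishing on $\partial M$, passes to the limit with cutoffs $\eta_{\delta_1}(t)$ and $\tilde\eta_{\delta_2}(x)$ to obtain the identity \eqref{eq-proof-prop-regularity-inte-finial}, replaces $w$ by $\partial_t w$, and deduces $\int_0^\infty \<\partial_t u(t)-v(t),\Box w(t)\>_x\,dt=0$ for all admissible $w$. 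The identification $\partial_t u=v$ then rests on a solvability claim: every $h\in C_c^\infty((0,\infty)\times M)$ equals $\Box w_h$ for some admissible $w_h$, proved by solving a \emph{backward} Dirichlet-wave problem \eqref{eq-prop-regularity-Dirichlet-wave-BminusK} on the bounded domain $B_{\bar R}\setminus\mathcal{K}$ and using finite speed of propagation to arrange compact support. Your proposed substitute --- approximating $(f,g,G)$ by smooth data that satisfy the compatibility conditions exactly, with $g_k\to g$ in $\dot H_D^1$, $\Delta f_k+G_k(0)\to\Delta f+G(0)$ in $L_x^2$, and $\partial_t G_k\to\partial_t G$ in $LE^*+L_t^1L_x^2$ --- is not a routine mollification on an exterior domain: the compatibility conditions couple the boundary traces of $f_k$, $g_k$ and $G_k(0,\cdot)$, and to know that $\partial_t u_k$ is honestly the solution of the differentiated problem you need the approximants to be compatible to one order \emph{higher} than the data you start from. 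Note that the paper itself opens the appendix by saying the density argument is the standard route when $\mathcal{K}=\emptyset$; the appendix exists precisely because that argument does not transfer readily to nonempty obstacles. Since you name this step as the technical heart but give no construction, the proof is incomplete at its central point.

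There is a second, smaller gap in your far-region argument. You assert that the commutator $[\Delta,\beta]u$ is ``supported in a fixed annulus and controlled by $u,\nabla u$,'' but control in $L_t^\infty L_x^2$ is not sufficient: to run the global-in-time theory for $\Omega(\beta u)$ one needs $\Omega[\Delta,\beta]u$, which involves second spatial derivatives of $u$ on the annulus, to lie in $LE^*+L_t^1L_x^2$, an $L_t^2$- or $L_t^1$-type space over the infinite time interval. This is exactly why the paper proves the stronger statement $Y[\sigma u]\in C_b\cap L_t^2(H^1)$ rather than just $C_b(H^1)$, using $\|u\|_{LE}<\infty$ together with the splitting $G=G_1+G_2$, $G_1\in LE^*$, $G_2\in L_t^1L_x^2$, to show $\sigma G\in C_b\cap L_t^2(L_x^2)$ before invoking elliptic estimates. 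Without that extra $L_t^2$ control, your patching of near and far regions only closes on bounded time intervals and does not yield the uniform-in-time conclusion $Yu\in C_b(H^1)$, $\partial_t Yu\in C_b(L_x^2)$.
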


\begin{proof}
	Let $w\in C_c^\infty(\overline{(0,\infty)\times M})$ and $w(t,\cdot)|_{\partial M}=0$, $\forall t\geq0$. Recall that $u\in C_b([0,\infty);\dot{H}_D^1(M))$, $\partial_t u\in C_b([0,\infty);L_x^2(M))$. Then, we have
	\begin{equation}\label{eq-proof-prop-regularity-inte-1st}
		\begin{aligned}
			&\quad\int_{0}^{\infty}\<u(t),\Box w(t)\>_x dt\\
			&=-\<f,\partial_t w(0)\>_x-\int_{0}^{\infty}\<\partial_tu(t),\partial_t w(t)\>_x dt+\int_{0}^{\infty}\< \nabla u(t)\cdot\nabla w(t)\>_x dt.
		\end{aligned}
	\end{equation}
	Fix a function $\eta(s)\in C_c^\infty(\R)$ with $\eta \equiv1$ on $|s|\leq\half$ and $\eta \equiv0$ on $|s|\geq1$. We denote $\eta_{\delta}=\eta(s/\delta)$, $\delta>0$ and denote 
	\begin{equation*}
		\tilde{\eta}_\delta(x)=\left[\chi_{\{y:\text{dist}(y,\mathcal{K})\leq\frac{3}{2}\delta\}}(\cdot)\right]\ast\left[\eta_{\frac{\delta}{2}}(|\cdot|)\right](x).
	\end{equation*}
	Due to the continuity, it follows that
	\begin{equation}\label{eq-proof-prop-regularity-limit}
		\begin{aligned}
			&\quad-\int_{0}^{\infty}\<\partial_tu(t),\partial_t w(t)\>_x dt+\int_{0}^{\infty}\< (\nabla u(t),\nabla w(t))\>_x dt\\
			&=\lim_{\delta_2\rightarrow0}\lim_{\delta_1\rightarrow0} -\int_{0}^{\infty}\<\eta_{\delta_1,\delta_2}(t)\partial_tu(t),\partial_t w(t)\>_x dt+\int_{0}^{\infty}\< \eta_{\delta_1,\delta_2}(t)\nabla u(t)\cdot\nabla w(t)\>_x dt,
		\end{aligned}
	\end{equation}
	where $\eta_{\delta_1,\delta_2}(t,x)=(1-\eta_{\delta_1}(t))(1-\tilde{\eta}_{\delta_2}(x))$.
	Hence, 
	\begin{equation}\label{eq-proof-prop-regularity-delta1-delta2}
		\begin{aligned}
			&\quad-\int_{0}^{\infty}\<\eta_{\delta_1,\delta_2}(t)\partial_tu(t),\partial_t w(t)\>_x dt+\int_{0}^{\infty}\< \eta_{\delta_1,\delta_2}(t)\nabla u(t)\cdot\nabla w(t)\>_x dt\\
			&=\<\eta_{\delta_1,\delta_2}G, w\>_{t,x}-\int_{0}^{\infty}\eta_{\delta_1}'(t)\<(1-\tilde{\eta}_{\delta_2})\partial_tu(t), w(t)\>_x dt\\
			&\quad+\int_{0}^{\infty}(1-\eta_{\delta_1}(t)) \<\nabla\tilde{\eta}_{\delta_2}\cdot\nabla u(t) w(t)\>_x dt\\
			&=\<\eta_{\delta_1,\delta_2}G, w\>_{t,x}+\<(1-\tilde{\eta}_{\delta_2})g,w(0)\>_x\\
			&\quad+\int_{0}^{\delta_1}(-\eta_{\delta_1}'(t))\left[\<(1-\tilde{\eta}_{\delta_2})\partial_tu(t), w(t)\>_x-\<(1-\tilde{\eta}_{\delta_2})g,w(0)\>_x\right] dt\\
			&\quad+\int_{0}^{\infty}(1-\eta_{\delta_1}(t)) \<\nabla\tilde{\eta}_{\delta_2}\cdot\nabla u(t), w(t)\>_x dt\\
		\end{aligned}
	\end{equation}
	By $w\in C_c^\infty(\overline{(0,\infty)\times M})$ and $G\in LE^*+L_t^1L_x^2$, $G\in C_t((H^{-1}(M))_{w*})$, we have
	\begin{equation}\label{eq-proof-prop-regularity-Gw}
		\lim_{\delta_2\rightarrow0}\lim_{\delta_1\rightarrow0}\<\eta_{\delta_1,\delta_2}G, w\>_{t,x}=\int_{0}^{\infty}\int_{M}G(t,x)w(t,x)dxdt=\int_{0}^{\infty}\<G(t),w(t)\>_xdt.
	\end{equation}
	Noticing that $\int_{0}^{\delta_1}|\eta_{\delta_1}'(t)|\lesssim1$ and the continuity of $\<\nabla\eta_{\delta_2}\cdot\nabla u(t), w(t)\>_x$ and $\<(1-\tilde{\eta}_{\delta_2})\partial_tu(t), w(t)\>_x$, we obtain that 
	\begin{equation}\label{eq-proof-prop-regularity-delta1}
		\begin{aligned}
			&\quad\lim_{\delta_1\rightarrow0} \left[\int_{0}^{\delta_1}(-\eta_{\delta_1}'(t))[\<(1-\tilde{\eta}_{\delta_2})\partial_tu(t), w(t)\>_x-\<(1-\tilde{\eta}_{\delta_2})g,w(0)\>_x] dt\right.\\
			&\quad\quad\left. +\int_{0}^{\infty}(1-\eta_{\delta_1}(t)) \<\nabla\tilde{\eta}_{\delta_2}\cdot\nabla u(t), w(t)\>_x dt\right]\\
			&=\int_{0}^{\infty}\<\nabla\tilde{\eta}_{\delta_2}\cdot\nabla u(t), w(t)\>_x dt.
		\end{aligned}
	\end{equation}
	Since $\mathcal{K}\neq\emptyset$, the measure of the support of $\nabla\tilde{\eta}_{\delta_2}$ is of the size $\delta_2$ and
	\begin{equation*}
			\|\nabla\tilde{\eta}_{\delta_2}\|_\infty\leq \|\chi_{\{y:\text{dist}(y,\mathcal{K})\leq\frac{3}{2}\delta_2\}}\|_1\|\nabla\eta_{\frac{\delta_2}{2}}\|_{\infty}\lesssim \frac{1}{\delta_2}.
	\end{equation*} 
	Hence, by $|w(t,x)|\lesssim\delta_2$ for $x\in \supp \nabla\tilde{\eta}_{\delta_2}$,
	\begin{equation}\label{eq-proof-prop-regularity-delta2}
		|\<\nabla\tilde{\eta}_{\delta_2}\cdot\nabla u(t), w(t)\>_x|\leq\|\nabla u(t)\|_{L_x^2}\|\nabla\tilde{\eta}_{\delta_2}w(t)\|_{L_x^2} \rightarrow 0, \ \delta_2\rightarrow0,
	\end{equation}
	uniformly in $t$. Combining \eqref{eq-proof-prop-regularity-inte-1st}, \eqref{eq-proof-prop-regularity-limit}, \eqref{eq-proof-prop-regularity-delta1-delta2}, \eqref{eq-proof-prop-regularity-Gw}, \eqref{eq-proof-prop-regularity-delta1}, and \eqref{eq-proof-prop-regularity-delta2}, we obtain that
	\begin{equation}\label{eq-proof-prop-regularity-inte-finial}
		\begin{aligned}
			&\quad\int_{0}^{\infty}\<u(t),\Box w(t)\>_x dt\\
			&=-\<f,\partial_t w(0)\>_x+\<g,w(0)\>_x+\int_{0}^{\infty}\<G(t),w(t)\>_xdt.
		\end{aligned}
	\end{equation}
	
	Next, we replace $w$ in \eqref{eq-proof-prop-regularity-inte-finial} by $\partial_tw$ and find that
	\begin{equation*}
		\begin{aligned}
			&\quad \int_{0}^{\infty}\<\partial_tu(t),\Box w(t)\>_x dt\\
			&=\<\Delta f, w(0)\>_x-\<g,\partial_tw(0)\>_x-\int_{0}^{\infty}\<G(t),\partial_tw(t)\>_xdt.
		\end{aligned}
	\end{equation*}
	Using the limitation argument above, by $G\in C_t((H^{-1}(M))_{w*})$ and $\partial_t G\in LE^*+L_t^1L_x^2$, one has
	\begin{equation*}
		\begin{aligned}
			&\quad \int_{0}^{\infty}\<\partial_tu(t),\Box w(t)\>_x dt\\
			&=\<\Delta f+G(0  ), w(0)\>_x-\<g,\partial_tw(0)\>_x+\int_{0}^{\infty}\int_{M}\partial_tG(t,x)w(t,x)dxdt.
		\end{aligned}
	\end{equation*}
	According to the compatibility condition, $g\in \dot{H}_D^1$ and $\Delta f(x)+G(0,x)\in L_x^2$, there exists a solution $v\in C_b([0,\infty);\dot{H}_D^1(M)$, $\partial_t v\in C_b([0,\infty);L_x^2(M)$ to the equation 
	\begin{equation*}
		\left\{\begin{aligned}
			&\Box v(t,x)=\partial_tG(t,x)&&, \ (t,x)\in (0,T)\times M, \ \\
			&v(t,x)=0&&, \ x\in \partial M,  \ t>0 , \\
			&v(0,x)=g(x), \ \partial_tv(0,x)=\Delta f(x)+G(0,x) &&, \ x\in M,
		\end{aligned}
		\right.
	\end{equation*}
	which implies
	\begin{equation*}
		\begin{aligned}
			&\quad \int_{0}^{\infty}\<v(t),\Box w(t)\>_x dt\\
			&=\<\Delta f+G(0  ), w(0)\>_x-\<g,\partial_tw(0)\>_x+\int_{0}^{\infty}\int_{M}\partial_tG(t,x)w(t,x)dxdt.
		\end{aligned}
	\end{equation*}
	Therefore, for all $w\in C_c^\infty(\overline{(0,\infty)\times M})$,
	\begin{equation}\label{eq-distribution-equation}
		\int_{0}^{\infty}\<\partial_tu(t)-v(t),\Box w(t)\>_x dt=0.
	\end{equation}
	
	As long as for any $h\in C_c^\infty((0,\infty)\times M)$ we can find a $w_h\in C_c^\infty(\overline{(0,\infty)\times M})$ such that $\Box w_h=h$, it can be deduce by \eqref{eq-distribution-equation} that $\partial_tu=v$.
	Because $h$ has compact support, we can choose $\bar{T}$ and $\bar{R}$ such that $\supp h\subset (0,\bar{T})\times B_{\bar{R}-4\bar{T}}$ and $\mathcal{K}\subset B_{\bar{R}-4\bar{T}}$.
	Consider the backward Dirichlet-wave equation on $B_{\bar{R}}\setminus\mathcal{K}$
	\begin{equation}\label{eq-prop-regularity-Dirichlet-wave-BminusK}
		\left\{\begin{aligned}
			&\Box \tilde{w}(t,x)=h(t,x)&&, \ (t,x)\in (0,\bar{T})\times (B_{\bar{R}}\setminus\mathcal{K}), \ \\
			&\tilde{w}(t,x)=0&&, \ x\in \partial (B_{\bar{R}}\setminus\mathcal{K}),  \ t\in [0,\bar{T}] , \\
			&\tilde{w}(\bar{T},x)=0, \ \partial_tv(\bar{T},x)=0 &&, \ x\in B_{\bar{R}}\setminus\mathcal{K}.
		\end{aligned}
		\right.
	\end{equation}
	Since we have assumed that the obstacle $\mathcal{K}$ is smooth, the problem \eqref{eq-prop-regularity-Dirichlet-wave-BminusK} has a solution $w\in C^\infty(\overline{(0,\bar{T})\times(B_{\bar{R}}\setminus\mathcal{K})})$.
	By finite speed of propagation, we deduce that $w\equiv0$ near time $\bar{T}$ and $w(t,x)=0$, $(t,x)\in [0,\bar{T}]\times\{y:\bar{R}-2.5\bar{T}\leq|y|\leq\bar{R}-1.5\bar{T}\}$. Hence, let
	\begin{equation*}
		w_h(t,x)=\left\{\begin{aligned}
			&\tilde{w}(t,x) &&, \ (t,x)\in [0,\bar{T}]\times B_{\bar{R}-1.5\bar{T}},\\
			&0&&,\ \text{else},
		\end{aligned}
		\right.
	\end{equation*}
	which satisfies the requirement $\Box w_h=h$.
	
	Having $\partial_t u\in C_b([0,\infty);H_D^1(M))$, $\partial_t^2 u\in C_b([0,\infty);L_x^2(M))$, we can estimate $Yu$.
	For any smooth cut-off function $\sigma$ of $\mathcal{K}$, $Yu=Y[\sigma u]+Y[(1-\sigma)u]$.
	As $u, \partial_t u\in C_b([0,\infty);\dot{H}_D^1(M))$, $\|u\|_{LE}<\infty$, and $G\in C_bL_{loc}^2$, it follows that by Poincar\'e inequalities and Sobolev embedding
	\begin{equation*}
		\|\sigma u\|_{L_x^2}\lesssim\|\nabla\sigma u\|_{L_x^2}+\|\sigma \nabla u\|_{L_x^2}\lesssim \|\nabla\sigma \|_{L_x^n}\|u \|_{L_x^{q}}+\|u\|_{\dot{H}^1}\lesssim\|u\|_{\dot{H}^1}, \ \frac{1}{q}+\frac{1}{n}=\half,
	\end{equation*}
	and, then
	\begin{gather*}
		Y[\sigma u]=Y[\sigma]u+\sigma Yu\in C_b\cap L_t^2(L_x^2), \\
		\partial_tY[\sigma u]=Y[\sigma]\partial_tu+\sigma Y\partial_tu\in C_b\cap L_t^2(L_x^2),\\
		\Delta [\sigma u]=\sigma(\partial_t^2u-G)+2\nabla\sigma\cdot\nabla u+[\Delta\sigma] u\in C_bL_x^2.
	\end{gather*}
	Since $\sigma G\in C_bL_x^2$ and $G=G_1+G_2$ with $G_1\in LE^*$ and $G_2\in L_t^1L_x^2$, one has $\sigma G_1\in L_t^2L_x^2$ and any bounded part of $\|\sigma G_2(t)\|_{L_x^2}$ belongs to $L_t^2$.
	On the other hand, $\sigma G_2=\sigma G-\sigma G_1$ implies 
	\begin{equation*}
		\{t:\|\sigma G_2(t)\|_{L_x^2}> 2\|\sigma G\|_{L_t^\infty L_x^2}\}\subset \{t:\|\sigma G_1(t)\|_{L_x^2}> \|\sigma G\|_{L_t^\infty L_x^2}\}
	\end{equation*}
	and on the set $\{t:\|\sigma G_2(t)\|_{L_x^2}> 2\|\sigma G\|_{L_t^\infty L_x^2}\}$
	\begin{equation*}
		\|\sigma G_2(t)\|_{L_x^2}\leq 2\|\sigma G_1(t)\|_{L_x^2}\in L_t^2.
	\end{equation*}
	Thus, we deduce that $G\in C_b\cap L_t^2(L^2)$ and $\Delta [\sigma u]\in C_b\cap L_t^2(L_x^2)$. Further, by elliptic estimates,
	$Y[\sigma u]\in C_b\cap L_t^2(H^1)$. 
	Meanwhile, modifying $(1-\sigma)u$ by $(1-\sigma(\cdot))u(t,\cdot)\ast\eta_{\delta}(|\cdot|)$ and by Newton-Leibniz formula, one obtain that $(1-\sigma)u\in C(L_x^2)$, hence $u\in C(H_D^1)$.
	
	Once again, using the above argument, we have
	\begin{equation*}
		\begin{aligned}
			&\quad\int_{0}^{\infty}\<Y[(1-\sigma)u](t),\Box w(t)\>_x dt\\
			&=-\<Y[(1-\sigma)f],\partial_t w(0)\>_x+\<Y[(1-\sigma)g],w(0)\>_x\\
			&\quad+\int_{0}^{\infty}\<Y[(1-\sigma)G](t)+2Y[\nabla\sigma\cdot\nabla u](t)+Y[\Delta\sigma u](t),w(t)\>_xdt.
		\end{aligned}
	\end{equation*}
	Thus, the inhomogeneous term satisfies
	\begin{equation*}
		Y[(1-\sigma)G]+2Y[\nabla\sigma\cdot\nabla u]+Y[\Delta\sigma u]\in LE^*+L_t^1L_x^2,
	\end{equation*}
	which demonstrates that $Y[(1-\sigma)u]\in C_b(\dot{H}_D^1)$ and $\partial_tY[(1-\sigma)u]\in C_b(L_x^2)$.
	In conclusion, we have $u\in C([0,\infty);H_D^{2}(M))$ and
	\begin{gather*}
		\partial_t^i u\in C_b([0,\infty);H_D^{2-i}(M)), \ 1\leq i \leq 2, \\
		\partial_t^jY^\alpha u\in C_b([0,\infty);H^{2-j-|\alpha|}(M)),  \ 1\leq|\alpha|\leq \min\{2-j,1\}.\qedhere
	\end{gather*}
\end{proof}

\section*{Acknowledgments}
The author was supported by China Scholarship Council (No. 202406320284).
The author thanks Professor Chengbo Wang (School of Mathematical Sciences, Zhejiang University) and Doctor Xiaoran Zhang (Beijing Institute of Mathematical Sciences and Applications) for some discussions in the preparation of the paper.

	\bibliographystyle{plain}
\bibliography{critical_reference}

\end{document}